\theoremstyle{definition}
\newtheorem{definition}{\textbf{Définition}}[section]
\newtheorem{remarque}[definition]{\textbf{Remarque}}
\theoremstyle{plain}
\newtheorem{theoreme}[definition]{\textbf{Théorème}}
\newtheorem{prop}[definition]{\textbf{Proposition}}
\newtheorem{lemme}[definition]{\textit{Lemme}}
 \newcommand{\cRM}[1]{\MakeUppercase{\romannumeral #1}} 
\DeclareMathOperator{\e}{e}
\DeclareMathOperator{\li}{li}
\title{\Huge{\bf{Sur la fonction Delta de~Hooley associée à des caractères }}}
\author{Alexandre Lartaux}
\begin{document}

\maketitle

\vglue0.3cm
\hglue0.02\linewidth\begin{minipage}{0.9\linewidth}
\begin{center}
{Universit\'e de Paris, Sorbonne Université}\\
CNRS,  \\
 Institut de~Math\'ematiques~de Jussieu- Paris Rive Gauche,\\
F-75013 Paris, France \\
 E-mail : \parbox[t]{0.45\linewidth}{\texttt{alexandre.lartaux@imj-prg.fr}}

\end{center}
\end{minipage}
\renewcommand{\abstractname}{Abstract}

\begin{abstract}
Let $(f_1,f_2)$ a $2$-tuple of arithmetic functions and
$$\Delta_3(n,f_1,f_2):=\sup\limits_{\substack{(u_1,u_2) \in \mathbb{R}^{2} \\(v_1,v_2) \in [0,1]^{2}}}\Big\lvert \sum\limits_{\substack{d_1 d_{2} \mid n \\ \e^{u_i}<d_i\leqslant \e^{u_i+v_i}}}{f_1(d_1) f_{2}(d_{2})} \Big\rvert{\rm .}$$
In this paper, we give an upper bound of the second moment of $\Delta_3(n,\chi_1,\chi_2)$ when $\chi_1$ and $\chi_2$ are two non principal Dirichlet characters, following methods developed by La Bretèche and Tenenbaum in \cite{B}. This upper bound is a main step for the asymptotic counting of the number of ideals of norm fixed, which will be developped in \cite{L}.
\end{abstract}

\tableofcontents

\section{Introduction}
\subsection{Notations et énoncés des théorèmes}
Pour étudier l'ensemble
$$\{ \log(d) ; d\mid n \}{\rm ,}$$
la fonction $\Delta$ de Hooley, introduite dans \cite{H} et définie par
\begin{equation}
\label{eq Delta}
\Delta(n)=\max\limits_{u \in \mathbb{R}}\big(\Delta(n,u)\big)
\end{equation}
où
\begin{equation}
\label{eq Deltau}
\Delta(n,u)=\sum\limits_{\substack{d \mid n \\ \e^u<d\leqslant \e^{u+1}}}{1}{\rm .}
\end{equation}
est un outil privilégié. Un résultat remarquable portant sur l'étude de l'ordre moyen de la fonction $\Delta$ a été apporté par Hall et Tenenbaum dans \cite{HT}. Il y est démontré que  cet ordre moyen  est $\log(n)^{o(1)}$. \\

Une généralisation de cette fonction est définie de la manière suivante. \\Pour $r\geqslant 2$ et $n\geqslant 1$, nous notons
\begin{equation}
\label{eq Deltar}
\Delta_r(n)=\max\limits_{\mathbf{u} \in \mathbb{R}^{r-1}}\big(\Delta_r(n,\mathbf{u})\big)
\end{equation}
où, pour $\mathbf{u}=(u_1,\ldots,u_{r-1})$,
\begin{equation}
\label{eq Deltaru}
\Delta_r(n,\mathbf{u})=\sum\limits_{\substack{d_1\ldots d_{r-1}\mid n \\ \e^{u_i}<d_i\leqslant \e^{u_i+1}}}{1}{\rm .}
\end{equation}
Hall et Tenenbaum (\cite{HT}) parviennent à étendre la majoration de l'ordre moyen de $\Delta_r(n)$ lorsque $r\geqslant 3$. Aussi, l'ordre moyen obtenu est $\log(n)^{o(1)}$ pour $r\geqslant 2$.  \\

Dans \cite{B}, La Bretèche et Tenenbaum s'intéressent à une autre généralisation. Ils étudient désormais la fonction Delta associée à un caractère de Dirichlet. Ils définissent donc, pour $n\geqslant 1$ et $\chi$ un caractère de Dirichlet non principal
\begin{equation}
\label{eq Deltaf}
\Delta(n,\chi)=\max\limits_{\substack{u \in \mathbb{R} \\ v \in [0,1]}}|\Delta(n,\chi,u,v)|
\end{equation}
où
\begin{equation}
\label{eq Deltafuv}
\Delta(n,\chi,u,v)=\sum\limits_{\substack{d \mid n \\\e^u<d\leqslant \e^{u+v}}}{\chi(d)}{\rm .}
\end{equation}
Le caractère $\chi$ n'étant pas positif, le maximum en $v$ n'est pas forcément atteint en $v=1$, ce qui justifie l'apparition de cette nouvelle variable. Ils montrent que les compensations dues aux oscillations des caractères sont d'ordre statistique, puisque l'ordre moyen de $\Delta(n,\chi)^2$ est le même que celui de $\Delta(n)$ à un facteur $(\log n)^{o(1)}$ près. \\

Dans cet article, nous nous intéressons à la fonction $\Delta_3$ lorsqu'elle est associée à deux caractères de Dirichlet. Pour $n \in \mathbb{N}^*$, $\boldsymbol{\chi}=(\chi_1,\chi_2)$ un couple de deux caractères de Dirichlet non principaux, $\mathbf{u}=(u_1,u_2)\in \mathbb{R}^2$ et $\mathbf{v}=(v_1,v_2)\in~[0,1]^2$, nous posons
\begin{equation}
\label{eq Delta3fuv}
\Delta_3(n,\boldsymbol{\chi},\mathbf{u},\mathbf{v})=\sum\limits_{\substack{d_1d_2\mid n\\ \e^{u_i}<d_i\leqslant \e^{u_i+v_i}}}{\chi_1(d_1)\chi_2(d_2)}{\rm ,}
\end{equation}

\begin{equation}
\label{eq Delta3f}\Delta_3(n,\boldsymbol{\chi})=\sup\limits_{\substack{ \mathbf{u} \in \mathbb{R}^2\\ \mathbf{v} \in [0,1]^2}} |\Delta_3(n,\boldsymbol{\chi},\mathbf{u},\mathbf{v})| {\rm .}
\end{equation}
Nous introduisons la constante $\rho$ définie par
\begin{equation}
\label{eq lambda}
\begin{split}
\rho:= \frac{1}{2\pi}\int_{-\pi}^{\pi}{\max\big(1,|1+\e^{i\vartheta}|^{2}\big){\rm d}\vartheta}-2=\frac{\sqrt{3}}{\pi}-\frac{1}{3}\approx 0,218 {\rm ,}
\end{split}
\end{equation}
et pour $y>0$, nous notons
\begin{equation}
\label{eq m}
m(y,\rho):=\left\{
 \begin{array}{ll}
(\rho+2)y  & \mbox{si }   y \leqslant \frac{1}{1-\rho} {\rm ,}\\
3y-1 & \mbox{sinon,} \\
\end{array}
\right.
\end{equation}
c'est-à-dire $m(y,\rho)=\max(3y-1,(\rho+2)y)$. \\

Pour la suite, lorsque $\chi_1$ et $\chi_2$ sont deux caractères de Dirichlet, nous notons
\begin{equation}
\label{chi}
\boldsymbol{\chi}:=(\chi_1, \chi_2)
\end{equation}
le couple constitué de ces deux caractères, et nous notons $r_1$ l'ordre de $\chi_1$ et $r_2$ l'ordre de $\chi_2$. Par ailleurs, nous notons également $q_1$ et $q_2$ les conducteurs respectifs de $\chi_1$ et $\chi_2$.\\
Pour $A>0$, $c>0$ et $\eta \in ]0, 1[$, nous disons qu'une fonction positive et multiplicative $g$ appartient à la classe $\mathcal{M}_A(c,\eta)$ si les conditions suivantes sont vérifiées
\begin{align}
  & \forall p \in \mathcal{P} \mbox{ et } \forall \nu \geqslant 1, \; g(p^{\nu}) \leqslant A^{\nu} \nonumber  \\
  & \forall \varepsilon >0 \mbox{ et } \forall n \geqslant 1,\; g(n) \ll_{\varepsilon} n^{\varepsilon} \nonumber \\
  & \sum\limits_{p\leqslant x}{g(p)}= y \li(x)+O(x\e^{-2c(\log x)^{\eta}}) 
  \label{eq def} 
\end{align}
\noindent
pour une certaine constante $y=y(g)>0$. \\

Lorsque $\chi$ est un caractère de Dirichlet d'ordre $r$, nous définissons $\mathcal{M}_A(\chi,c,\eta)$ la classe des fonctions multiplicatives $g$ appartenant à $\mathcal{M}_A(c,\eta)$ et vérifiant
\begin{equation}
\label{eq g}
\forall k \in \mathbb{Z}, \; \sum\limits_{\substack{p\leqslant x \\ \chi(p)=\zeta^k}}{g(p)}=\frac{y}{r} \; \li(x)+ O(x\e^{-2c(\log x)^{\eta}}) 
\end{equation}
où $y=y(g)$ et  
\begin{equation}
\label{eq zeta}
\zeta:=\e^{2i\pi/r} {\rm .}
\end{equation}

Enfin, pour $\boldsymbol{\chi}=(\chi_1, \chi_2)$ un couple de caractères de Dirichlet, $A>0$, $c>0$ et $\eta \in ]0, 1[$, nous définissons la classe $\mathcal{M}_A(\boldsymbol{\chi},c,\eta)$ par
$$\mathcal{M}_A(\boldsymbol{\chi},c,\eta):=\mathcal{M}_A(\chi_1,c,\eta)\bigcap\mathcal{M}_A(\chi_2,c,\eta)\bigcap\mathcal{M}_A(\chi_1\overline{\chi_2},c,\eta){\rm .}$$
Lorsque $x\geqslant 16$, nous notons
$$\mathcal{L}(x):=\e^{\sqrt{\log_2(x)\log_3(x)}} $$
\noindent
où $\log_k$ désigne la $k$-ième itération de la fonction $\log$. Enfin, si $g$ est une fonction arithmétique et $\boldsymbol{\chi}$ un couple de caractères de Dirichlet, nous notons
\begin{equation}
\label{eq theo 1.2}
\mathfrak{S}(x,g,\boldsymbol{\chi}):=\sum\limits_{n \leqslant x}{g(n)\Delta_3(n,\boldsymbol{\chi})^{2}} {\rm .}
\end{equation}

\begin{theoreme}
\label{theo 1}
Soit $\boldsymbol{\chi}$ défini en \eqref{chi} un couple de  caractères de Dirichlet non principaux tel que $\chi_1\overline{\chi_2}$ ne soit pas principal, $A$, $c$ des constantes strictement positives, $\eta \in  ]0,1[$ et $g$ appartenant à $\mathcal{M}_A(\boldsymbol{\chi},c,\eta)$. Pour $y=y(g)>0$, il existe une constante $\alpha >0$, dépendant au plus de $g$, $\boldsymbol{\chi}$, $c$ et $\eta$, telle que nous ayons, uniformément pour $x\geqslant 16$ 
\begin{equation}
\label{eq theo 1.1}
\mathfrak{S}(x,g,\boldsymbol{\chi})\ll x(\log x)^{\max\{y-1,(\rho+2)y-2,3y-3\}}\mathcal{L}(x)^{\alpha}{\rm .}
\end{equation}
En particulier, si $y(g)=1$, alors il existe une constante $\alpha>0$ telle que nous ayons, uniformément pour $x\geqslant 16$
\begin{equation}
\label{eq coro}
\mathfrak{S}(x,g,\boldsymbol{\chi})\ll x(\log x)^{\rho}\mathcal{L}(x)^{\alpha}{\rm ,}
\end{equation}
où $\rho$ est défini en \eqref{eq lambda}.
\end{theoreme}
\begin{remarque}
\label{rq prem}
Comme nous le verrons par la suite, la première étape de la démonstration du Théorème \ref{theo 1} consiste  à ramener le problème à l'étude d'une somme d'entiers sans facteur carré, pondérés par un poids $\frac{1}{n}$. Pour cela, nous utilisons une propriété de sous multiplicativité de la fonction $\Delta$. Cette propriété nous permet de supposer également que $g(n)=0$ si $(n,q_1q_2) \neq 1$ où $q_1$ et $q_2$ désignent les conducteurs respectifs de $\chi_1$ et $\chi_2$.
\end{remarque}
Nous pouvons alors nous demander à quel point le Théorème \ref{theo 1} est optimal. \`A cet effet, nous énonçons le théorème de minoration suivant.
\begin{theoreme}
\label{prop 2}
Soient $y>0$ un réel et $\boldsymbol{\chi}$ un couple de caractères de Dirichlet non principaux. \\

\noindent
(i) Si $\chi_1\overline{\chi_2}$ n'est pas principal, alors nous avons lorsque $x\geqslant 3$,
$$\sum\limits_{n\leqslant x}{\mu^2(n)y^{\omega(n)}\Delta_3^2(n,\boldsymbol{\chi})} \gg x(\log x)^{\max\{y-1,3y-3\}} {\rm .}$$

\noindent
(ii) Si $\chi_1\overline{\chi_2}$ est principal, alors nous avons lorsque $x\geqslant 3$,
$$\sum\limits_{n\leqslant x}{\mu^2(n)y^{\omega(n)}\Delta_3^2(n,\boldsymbol{\chi})} \gg x(\log x)^{\max\{y-1,5y-4\}} {\rm .}$$
\end{theoreme}




Les résultats du Théorème \ref{theo 1} et du Théorème \ref{prop 2} montrent que l'exposant $\max\{y-1,(\rho+2)y-2,3y-3\}$ est optimal lorsque $y\leqslant 1/(1+\rho)$ ou $y\geqslant 1/(1-\rho)$. Il est par ailleurs intéressant de comparer ces résultats avec ceux obtenus par Hall et Tenenbaum. Dans \cite{HT}, les auteurs établissent les estimations suivantes.
\begin{theoreme}[Hall, Tenenbaum \cite{HT}]
Pour tout $0<y<1$, il existe une constante $\alpha$ pour laquelle nous avons, lorsque $x\geqslant 16$,
$$\sum\limits_{n\leqslant x}{y^{\omega(n)}\Delta_3(n)}\ll x(\log x)^{y-1}\mathcal{L}(x)^{\alpha}{\rm .}$$
Pour tout $y\geqslant 1$, il existe une constante $\beta$ pour laquelle nous avons, lorsque $x\geqslant 16$,
$$\sum\limits_{n\leqslant x}{y^{\omega(n)}\Delta_3(n)}\ll x(\log x)^{3y-3}\mathcal{L}(x)^{\beta}{\rm .}$$
Les constantes $\alpha$ et $\beta$ sont explicites.
\end{theoreme} 
Par ailleurs, des minorations triviales montrent que les exposants de $\log x$ sont optimaux. Nous pouvons donc observer que pour $y\leqslant 1/(1+\rho)$ et pour $y\geqslant 1/(1-\rho)$, $y^{\omega(n)}\Delta_3(n,\chi_1,\chi_2)^2$ se comporte en moyenne comme $y^{\omega(n)}\Delta_3(n)$. Néanmoins pour les autres valeurs de $y$, notamment pour $y=1$, nous ne savons pas si cela reste vrai. \\

Pour démontrer le Théorème \ref{theo 1}, nous étudierons les quantités suivantes
\begin{equation}
\label{eq C}
C(n,\boldsymbol{\chi}):=\sup\limits_{\substack{u_2 \in \mathbb{R} \\ v_1,v_2 \in [0,1]^2}} C(n,\boldsymbol{\chi},u_2,v_1,v_2)
\end{equation}
et
\begin{equation}
\label{eq D}
D(n,\boldsymbol{\chi}):=\sup\limits_{\substack{u \in \mathbb{R} \\ v_1,v_2 \in [0,1]^2}} D(n,\boldsymbol{\chi},u,v_1,v_2)
\end{equation}
\noindent
où
\begin{equation}
\label{eq Cu}
C(n,\boldsymbol{\chi},u_2,\mathbf{v}):=\int_{\mathbb{R}}{|\Delta_3(n,\boldsymbol{\chi},u_1,u_2,\mathbf{v})|^2{\rm d}u_1}
\end{equation}
et
\begin{equation}
\label{eq Du}
D(n,\boldsymbol{\chi},u,\mathbf{v}):=\int_{\mathbb{R}}{|\Delta_3(n,\boldsymbol{\chi},u-u_1,u_1,\mathbf{v})|^2{\rm d}u_1}
\end{equation}
\noindent
avec $\mathbf{v}=(v_1,v_2)$,  $\boldsymbol{\chi}$ un couple de caractères de Dirichlet et $n \in \mathbb{N}^*$.

Comme nous le verrons par la suite, les estimations de $C(n,\boldsymbol{\chi})$ et de $D(n,\boldsymbol{\chi})$ passent par l'estimation de 
$$\Delta^{(k)}(n,\boldsymbol{\chi},\vartheta):=\sup\limits_{(u,v_1,v_2)\in \mathbb{R}\times [0,1]^2} \big\lvert \Delta^{(k)}(n,\boldsymbol{\chi},\vartheta,u,v_1,v_2)\big\rvert{\rm ,}$$
où
\begin{equation}
\label{Dktheta}
\Delta^{(k)}(n,\boldsymbol{\chi},\vartheta,u,v_1,v_2):=\sum\limits_{\substack{d_1d_2\mid n \\ \e^u<d_1\leqslant \e^{u+v_1}}}{\chi_1(d_1)\chi_2(d_2)d_2^{i\vartheta}(u+v_1-v_2-\log d_1)^k}{\rm .}
\end{equation} 
Toutes ces quantités sont étudiées dans la partie 3.

\subsection{Applications}

Les résultats que nous obtenons pour l'ordre moyen de $\Delta_3(n,\chi_1,\chi_2)^2$ ne fournissent pas des compensations statistiques qui nous permettraient de le comparer à l'ordre moyen de $\Delta_3(n)$. Le Théorème \ref{theo 1} est néanmoins utilisé de manière cruciale dans l'article \cite{L} afin de fournir, dans un cadre particulier, une estimation asymptotique en $\xi$ de la quantité suivante.
$$Q(F,\xi,\mathcal{R}):=\sum\limits_{\mathbf{x} \in \mathbb{Z}^2\cap \mathcal{R}(\xi)}{r_3(F(\mathbf{x}))}{\rm ,}$$
où $F$ est une forme binaire de degré $3$, $r_3(n)$ compte le nombre d'idéaux d'un anneau d'entiers de corps de nombre de norme $n$, $\mathcal{R}$ est un domaine de $\mathbb{R}^2$. Enfin, nous désignons par $\mathcal{R}(\xi)$ le domaine de $\mathbb{R}^2$ obtenu en dilatant $\mathcal{R}$ par $\xi$, c'est-à-dire
$$\mathcal{R}(\xi):=\{\mathbf{x} \in \mathbb{R}^2 : \mathbf{x}/\xi \in \mathcal{R}\}{\rm .}$$

 Dans le cas où le corps de nombre $\mathbb{K}$ est une extension cyclique de $\mathbb{Q}$ de degré $3$, en notant $G:=\rm{Gal}(\mathbb{K}/\mathbb{Q})$ son groupe de Galois et $\chi$  un caractère non principal de $G$, nous pouvons relier la fonction $r_3$ au caractère $\chi$ par l'égalité suivante.
 $$r_3(n)=(1*\chi*\chi^2)(n){\rm ,}$$
 où $*$ désigne le produit de convolution entre deux fonctions arithmétiques. Pour $f$ et $g$ deux telles fonctions , celui-ci est la fonction arithmétique définie par
$$(f*g)(n)=\sum\limits_{d\mid n}{f(d)g\Big(\frac{n}{d}\Big)} \;\;\;\;\; \forall n\geqslant 1{\rm .}$$
Nous formulons quatre hypothèses sur le domaine $\mathcal{R}$ et sur la forme binaire $F$, valables pour certaines valeurs $\sigma>0$ et $\vartheta>0$.  
\begin{align*}
\text{(H1)} & &&\text{Le domaine } \mathcal{R} \text{ est un ouvert borné convexe dont } \\
& &&\text{la frontière est continuement différentiable.} \\
\text{(H2)} & &&\forall \mathbf{x} \in \mathcal{R}\mbox{, } ||\mathbf{x}||\leqslant \sigma \\
\text{(H3)} & && \forall \mathbf{x} \in \mathcal{R}\mbox{, } |F(\mathbf{x})|\leqslant \vartheta^3{\rm .}\\
\text{(H4)} & && \text{La forme } F \text{ est irréductible sur } \mathbb{K}{\rm .}
\end{align*}

\begin{theoreme}
Soient $\mathbb{K}$ une extension cyclique de $\mathbb{Q}$ de degré $3$, $\varepsilon>0$, $\sigma>0$, $\xi>0$, $F$ une forme binaire de degré $3$ et $\mathcal{R}$ un domaine de $\mathbb{R}^2$ tels que les hypothèses (H1), (H2), (H3) et (H4) soient vérifiées. Sous les conditions
$$1/\sqrt{\xi}\leqslant \sigma \leqslant \xi^{3/2}{\rm ,}\;\;\;\;\; 1/\sqrt{\xi}\leqslant \vartheta \leqslant \xi^{3/2}{\rm ,}$$
nous avons
\begin{equation}
\label{1}
Q(F,\xi,\mathcal{R})=K(F){\rm vol}(\mathcal{R})\xi^2+O\Big(\frac{||F||^{\varepsilon}(\sigma^2+\vartheta^2)\xi^2}{(\log \xi)^{0,0034}}\Big){\rm .}
\end{equation}
où $||F||$ désigne le maximum des coefficients de la forme $F$ et $K(F)$ est une constante explicite possédant une interprétation géométrique.
\end{theoreme}

\section{Lemmes préliminaires}
\subsection{Majorer une norme infinie par une norme $L^q$}
La démonstration du Théorème \ref{theo 1} suit la méthode différentielle de \cite{B}. Les lemmes préliminaires que nous énonçons ici sont analogues à ceux de notre article de référence. Cependant, comme il s'agit ici essentiellement d'une généralisation à deux variables de ces lemmes, plusieurs difficultés apparaissent, nous rédigeons donc en détails les démonstrations.

Avant d'énoncer le premier lemme, quelques notations sont nécessaires. \\

Lorsque $n \geqslant 1$, nous posons
\begin{align}
\label{E}
E(n):=\min\limits_{\substack{dd'\mid n \\ d<d'}}{\log \frac{d'}{d}}\mbox{,} & &&
E^*(n):=\min{\big(1,E(n)\big)} {\rm .}
\end{align}
Pour $n \geqslant 1$, $q \geqslant 1$, $\mathbf{f}$ un couple de deux fonctions arithmétiques, nous posons
\begin{equation}
\label{eq M}
M_{2q}(n,\mathbf{f}):=\int_{[0,1]^2}{\int_{\mathbb{R}^2}{|\Delta_3(n,\mathbf{f},\mathbf{u},\mathbf{v})|^{2q}{\rm d} \mathbf{u}}{\rm d} \mathbf{v}} {\rm .}
\end{equation}
Pour $\mathbf{x} \in \mathbb{R}^2$, nous  notons
$$ ||\mathbf{x}||_{\infty}=\max(|x_1|,|x_2|) {\rm .}$$
Lorsque $\chi$ est un caractère de Dirichlet, nous notons
\begin{equation}
\label{eq lemme 1.2}
\Delta^*(n,\chi):=\max\limits_{d \mid n}\big(\Delta(d,\chi)\big){\rm ,}
\end{equation}
où $\Delta(n,\chi)$ est défini en \eqref{eq Deltaf}.
\begin{lemme}
\label{lemme 1}
Pour $n\geqslant 1$, $q\geqslant 1$, $\boldsymbol{\chi}$ défini en \eqref{chi}, un couple de deux caractères de Dirichlet, nous avons
\begin{equation}
\label{eq lemme 1}
\Delta_3(n,\boldsymbol{\chi})^2\leqslant 4E^*(n)^{-4/q}M_{2q}(n,\boldsymbol{\chi})^{1/q}+32\big(\Delta^*(n,\chi_1)^2+\Delta^*(n,\chi_2)^2\big) {\rm .}
\end{equation}
\end{lemme}

Les deuxième et troisième membres qui apparaissent dans ce majorant ne peuvent être trivialement traités par le théorème 1.1 de \cite{B} puisque le maximum sur les diviseurs de $n$ n'est pas forcément atteint en $n$, contrairement au cas où $\chi_1=\chi_2=1$. Il nécessite donc une résolution particulière que nous développons dans la sous-section $4.1$. Cet aspect constitue une des innovations de ce travail.

\begin{proof}

Si $\Delta_3(n,\boldsymbol{\chi})\leqslant 4\big(\Delta^*(n,\chi_1)+\Delta^*(n,\chi_2)\big)$, le résultat est évident. \\
Sinon, nous notons $\mathbf{u_0}$ et $\mathbf{v_0}$ les réels tels que
$$\Delta_3(n,\boldsymbol{\chi})=|\Delta_3(n,\boldsymbol{\chi},\mathbf{u_0},\mathbf{v_0})| $$
\noindent
avec $\mathbf{u_0}=(u_{01},u_{02})$ et $\mathbf{v_0}=(v_{01},v_{02})$.
\noindent
Pour tout $\mathbf{u}=(u_1,u_2)$, $\mathbf{v}=(v_1,v_2)$ tels que 
$$||\mathbf{u}+\mathbf{v}-\mathbf{u_0}-\mathbf{v_0}||_{\infty}\leqslant E^*(n){\rm ,}$$ 
nous avons 
\begin{align*}
\Delta_3(n,\boldsymbol{\chi},\mathbf{u},\mathbf{v}) & =&& \!\!\!\!\!\!\!\!\!\!\!\!\!\!\!\!\!\!\!\!\!\!\!\!\!&&\!\!\!\!\!\!\!\!\!\!\!\!\!\!\!\!\!\!\!\!\sum\limits_{\substack{\log(d) \in ]u_1,u_1+v_1]\\d\mid n}}{\chi_1(d)\Delta\Big(\frac{n}{d},\chi_2,u_2,v_2\Big)} \\
 & =&& \!\!\!\!\!\!\!\!\!\!\!\!\!\!\!\!\!\!\!\!\!\!\!\!\!&&\!\!\!\!\!\!\!\!\!\!\!\!\!\!\!\!\!\!\!\!\!\!\!\!\!\sum\limits_{\substack{\log(d) \in ]u_{01},u_{01}+v_{01}] \\d\mid n}}{\chi_1(d)\Delta\Big(\frac{n}{d},\chi_2,u_2,v_2\Big)} \\
 &&&\!\!\!\!\!\!\!\!\!\!\!\!\!\!\!+&&\!\!\!\!\!\!\!\!\!\!\!\!\!\!\!\!\!\!\!\!\!\!\!\!\sum\limits_{\substack{\log(d) \in ]u_1,u_1+v_1]\\ \log(d)\notin ]u_{01},u_{01}+v_{01}]\\ d\mid n}}{\chi_1(d)\Delta\Big(\frac{n}{d},\chi_2,u_2,v_2\Big)}\\
 &&&\!\!\!\!\!\!\!\!\!\!\!\!\!\!\!-&&\!\!\!\!\!\!\!\!\!\!\!\!\!\!\!\!\!\!\!\!\!\!\!\!\!\sum\limits_{\substack{\log(d) \in ]u_{01},u_{01}+v_{01}] \\ \log(d)\notin ]u_1,u_1+v_1] \\ d\mid n}}{\chi_1(d)\Delta\Big(\frac{n}{d},\chi_2,u_2,v_2\Big)} {\rm .}
\end{align*}
Les deux dernières sommes comportent au plus un terme, elles sont donc majorées en valeur absolue par
$\Delta^*(n,\chi_2) {\rm .}$
Nous pouvons alors effectuer le même travail avec la première somme, ce qui nous permet de retrouver $\Delta_3(n,\boldsymbol{\chi},\mathbf{u_0},\mathbf{v_0})$ et un reste qui ne dépassera pas $2\Delta^*(n,\chi_1) {\rm .}$
Ainsi, pour tout $\mathbf{u}$, $\mathbf{v}$ tels que $||\mathbf{u}+\mathbf{v}-\mathbf{u_0}-\mathbf{v_0}||_{\infty}\leqslant E^*(n)$, nous avons
$$|\Delta_3(n,\boldsymbol{\chi},\mathbf{u},\mathbf{v})|\geqslant \Delta_3(n,\boldsymbol{\chi},\mathbf{u_0},\mathbf{v_0})-2\big(\Delta^*(n,\chi_1)+\Delta^*(n,\chi_2)\big) {\rm .}$$
Puisque $\Delta_3(n,\boldsymbol{\chi},\mathbf{u_0},\mathbf{v_0})\geqslant 4\big(\Delta^*(n,\chi_1)+\Delta^*(n,\chi_2)\big)$, nous avons
$$|\Delta_3(n,\boldsymbol{\chi},\mathbf{u},\mathbf{v})|\geqslant \frac{1}{2}\Delta_3(n,\boldsymbol{\chi}) {\rm .}$$
\noindent
En élevant cette inégalité à la puissance $2q$, en intégrant cette relation sur l'ensemble de mesure $E^*(n)^4$ sur lequel cette inégalité est valable, puis en prenant la racine $q$-ième, nous obtenons le résultat annoncé. 
\end{proof}
Pour la suite, lorsque $\chi$ est un caractère de Dirichlet, nous notons
\begin{equation}
\label{eq lemme 2.2}
M^{\dagger}_{2q}(n,\chi):=\sum\limits_{d \mid n}{M_{2q}(d,\chi)} 
\end{equation}
avec
\begin{equation}
\label{eq lemme 2.2 bis}
M_{2q}(n,\chi):=\int_{0}^{1}{\int_{\mathbb{R}}{|\Delta(n,\chi,u,v)|^{2q}{\rm d}u}{\rm d}v}{\rm .}
\end{equation}

\begin{lemme}
\label{lemme 2}
Pour $n\geqslant 1$, $q\geqslant 1$, $\chi$ un caractère de Dirichlet, nous avons
\begin{equation}
\label{eq lemme 2.1}
\Delta^*(n,\chi)^2\leqslant 4E^*(n)^{-2/q}M^{\dagger}_{2q}(n,\chi)^{1/q}+4\tau(n)^{1/q} {\rm .}
\end{equation}

\end{lemme}

\begin{proof}
Observons tout d'abord que pour tout $n$ et pour tout $\chi$, nous avons 
$$\Delta^*(n,\chi)^2\leqslant \Big(\sum\limits_{d\mid n}{\Delta(d,\chi)^{2q}}\Big)^{1/q} {\rm .}$$
\noindent
Nous utilisons ensuite le lemme $2.1$ de \cite{B} sous la forme suivante
$$\Delta(d,\chi)^{2q}\leqslant 4^qE^*(d)^{-2}M_{2q}(d,\chi)+4^q$$
\noindent
pour obtenir
\begin{align*}
\Delta^*(n,\chi)^2& \leqslant \Big(\sum\limits_{d\mid n}{4^qE^*(d)^{-2}M_{2q}(d,\chi)+4^q}\Big)^{1/q}\\
&\leqslant \Big(\sum\limits_{d\mid n}{4^qE^*(n)^{-2}M_{2q}(d,\chi)}+4^q\tau(n)\Big)^{1/q}{\rm .}
\end{align*}
Pour conclure, nous utilisons l'inégalité suivante valable pour tout $x,y \geqslant 0$ et $q\geqslant 1$
$$(x+y)^{1/q}\leqslant x^{1/q}+y^{1/q} {\rm .}$$
\end{proof}
Pour les besoins du prochain lemme, nous notons, pour $n \in \mathbb{N}^*$, $\chi$ un caractère de Dirichlet et $\vartheta \in \mathbb{R}$,
\begin{equation}
\label{eq tautheta}
\tau(n,\chi,\vartheta):=\sum\limits_{d\mid n}{\chi(d)d^{i\vartheta}}{\rm .}
\end{equation}
Pour $\boldsymbol{\chi}$ un couple de caractères de Dirichlet, $q \in \mathbb{N}$ et $j\leqslant q$, nous notons
\begin{equation}
\label{eq Mijtheta}
M_{2q}^{(j)}(n,\boldsymbol{\chi},\vartheta):=\int_{[0,1]^2}{\int_{\mathbb{R}}{|\Delta^{(j)}(n,\boldsymbol{\chi},\vartheta,u,v_1,v_2)|^{2q}{\rm d}u}{\rm d}v_1{\rm d}v_2} {\rm ,}
\end{equation}
où les quantités $\Delta^{(j)}(n,\boldsymbol{\chi},\vartheta,u,v_1,v_2)$ sont définies en \eqref{Dktheta}. Enfin, pour $k \in \mathbb{N}$, nous posons
$$E_k(n):=\min\Big(\frac{\log (3/2)}{k+1},E(n)\Big){\rm .}$$
\begin{lemme}
\label{lemme 3}
Pour $k\in \mathbb{N}$, $n\geqslant 1$, $q\geqslant 1$, $\boldsymbol{\chi}$ un couple de caractères de Dirichlet et $\vartheta \in \mathbb{R}$, nous avons
$$\Delta^{(k)}(n,\boldsymbol{\chi},\vartheta)^2\leqslant 16E_k(n)^{-3/q}\sum\limits_{j\leqslant k}{M_{2q}^{(j)}(n,\boldsymbol{\chi},\vartheta)^{1/q}}+64\e^2\max\limits_{d \mid n}\big(|\tau(d,\chi_2,\vartheta)|^2\big) {\rm .}$$
\noindent

\end{lemme}

\begin{proof}
Pour tout $k \in \mathbb{N}$, nous notons $j_k$ un entier $j\leqslant k$ tel que 
$$\Delta^{(j_k)}(n,\boldsymbol{\chi},\vartheta):=\max\limits_{j\leqslant k} \Delta^{(j)}(n,\boldsymbol{\chi},\vartheta){\rm .}$$
Si $\Delta^{(j_k)}(n,\boldsymbol{\chi},\vartheta)\leqslant 8\e\max\limits_{d \mid n}\big(|\tau(d,\chi_2,\vartheta)|^2\big)$, le résultat est évident. \\
Sinon, nous notons $u_0$, $v_{01}$ et $v_{02}$ les réels tels que
$$\Delta^{(j_k)}(n,\boldsymbol{\chi},\vartheta)=|\Delta^{(j_k)}(n,\boldsymbol{\chi},\vartheta,u_0,v_{01},v_{02})|{\rm .} $$
\pagebreak

\noindent
Pour tout $u,v_1,v_2$ tels que 
$$\lvert u-u_0\rvert +\lvert v_1-v_{01}\rvert +\lvert v_2-v_{02}\rvert\leqslant E_k(n){\rm ,}$$ 
nous avons 
\begin{align*}
\Delta^{(j_k)}(n,\boldsymbol{\chi},\vartheta,u,v_1,v_2) & =&& &&\!\!\!\!\!\sum\limits_{\substack{\log(d) \in ]u,u+v_1]\\d\mid n}}{\chi_1(d)\tau\Big(\frac{n}{d},\chi_2,\vartheta\Big)(u+v_1-v_2-\log d)^{j_k}} \\
 & =&& &&\!\!\!\!\!\!\!\!\!\!\sum\limits_{\substack{\log(d) \in ]u_0,u_0+v_{01}] \\d\mid n}}{\chi_1(d)\tau\Big(\frac{n}{d},\chi_2,\vartheta\Big)(u+v_1-v_2-\log d)^{j_k}} \\
 &&&+&&\!\!\!\!\!\!\!\!\!\sum\limits_{\substack{\log(d) \in ]u,u+v_1]\\ \log(d)\notin ]u_{0},u_{0}+v_{01}]\\ d\mid n}}{\chi_1(d)\tau\Big(\frac{n}{d},\chi_2,\vartheta\Big)(u+v_1-v_2-\log d)^{j_k}}\\
 &&&-&&\!\!\!\!\!\!\!\!\!\!\sum\limits_{\substack{\log(d) \in ]u_{0},u_{0}+v_{01}] \\ \log(d)\notin ]u,u+v_1] \\ d\mid n}}{\chi_1(d)\tau\Big(\frac{n}{d},\chi_2,\vartheta\Big)(u+v_1-v_2-\log d)^{j_k}} {\rm .}
\end{align*}
Les deux dernières sommes comportent au plus un terme. Par ailleurs, nous avons, pour tout $\log(d) \in ]u_{0},u_{0}+v_{01}] \cup ]u,u+v_1]$
$$(u+v_1-v_2-\log d)^{j_k}\leqslant (1+\frac{1}{k+1})^{j_k}\leqslant \e {\rm .}$$
Ainsi, ces deux sommes sont majorées par 
$$\e \max\limits_{d \mid n}\big(|\tau(d,\chi_2,\vartheta)|\big){\rm .}$$
Pour la première somme, nous écrivons 
\begin{align*}
&\sum\limits_{\substack{\log(d) \in ]u_0,u_0+v_{01}] \\d\mid n}}{\chi_1(d)\tau\Big(\frac{n}{d},\chi_2,\vartheta\Big)(u+v_1-v_2-\log d)^{j_k}}\\
=&\!\!\!\!\!\!\!\!\sum\limits_{\substack{\log(d) \in ]u_0,u_0+v_{01}] \\d\mid n}}{\!\!\!\!\!\!\!\chi_1(d)\tau\Big(\frac{n}{d},\chi_2,\vartheta\Big)\big(u_0+v_{01}-v_{02}-\log d+(u-u_0+v_1-v_{01}+v_{02}-v_2)\big)^{j_k}}\\
=&\sum\limits_{j\leqslant j_k}{\binom{j_k}{j}(u-u_0+v_1-v_{01}+v_{02}-v_2)^{j_k-j}}\\
&\sum\limits_{\substack{\log(d) \in ]u_0,u_0+v_{0}] \\d\mid n}}{\chi_1(d)\tau\Big(\frac{n}{d},\chi_2,\vartheta\Big)(u_0+v_{01}-v_{02}-\log d)^{j}}{\rm .}
\end{align*} 
Nous avons ainsi
\begin{align*}
&\Big\lvert\sum\limits_{\substack{\log(d) \in ]u_0,u_0+v_{01}] \\d\mid n}}{\chi_1(d)\tau\Big(\frac{n}{d},\chi_2,\vartheta_2\Big)(u+v_1-v_2-\log d)^{j_k}}\Big\rvert\\
\geqslant & \Delta^{(j_k)}(n,\boldsymbol{\chi},\vartheta)-\big((1+\lvert u-u_0+v_1-v_{01}+v_{02}-v_2\rvert)^{j_k}-1\big)\Delta^{(j_k)}(n,\boldsymbol{\chi},\vartheta){\rm .}
\end{align*}
Par hypothèse, nous avons
$$\big((1+\lvert u-u_0+v_1-v_{01}+v_{02}-v_2\rvert)^{j_k}-1\big)\leqslant \frac{1}{2}{\rm ,}$$
ce qui fournit
\begin{align*}
&\Big\lvert\sum\limits_{\substack{\log(d) \in ]u_0,u_0+v_{01}] \\d\mid n}}{\chi_1(d)\tau\Big(\frac{n}{d},\chi_2,\vartheta_2\Big)(u+v_1-v_2-\log d)^{j_k}}\Big\rvert\\
\geqslant & \frac{1}{2}\Delta^{(j_k)}(n,\boldsymbol{\chi},\vartheta){\rm .}
\end{align*} 
Puisque $\Delta^{(j_k)}(n,\boldsymbol{\chi},\vartheta)\geqslant 8\e\max\limits_{d\mid n}\lvert \tau(d,\boldsymbol{\chi},\vartheta)\rvert$, nous avons
$$|\Delta^{(j_k)}(n,\boldsymbol{\chi},\vartheta,u,v_1,v_2)|\geqslant \frac{1}{4}\Delta^{(j_k)}(n,\boldsymbol{\chi},\vartheta) {\rm .}$$
\noindent
En élevant cette inégalité à la puissance $2q$, en intégrant cette relation sur l'ensemble de mesure $E_k(n)^3$ sur lequel cette inégalité est valable, puis en prenant la racine $q$-ième, nous obtenons

\begin{align*}
\Delta^{(j_k)}(n,\boldsymbol{\chi},\vartheta)^2&\leqslant 16E_k(n)^{-3/q}M_{2q}^{(j_k)}(n,\boldsymbol{\chi},\vartheta)^{1/q}+64\e^2\max\limits_{d \mid n}\big(|\tau(d,\chi_2,\vartheta)|^2\big)\\
& \leqslant 16E_k(n)^{-3/q}\sum\limits_{j\leqslant k}{M_{2q}^{(j)}(n,\boldsymbol{\chi},\vartheta)^{1/q}}+64\e^2\max\limits_{d \mid n}\big(|\tau(d,\chi_2,\vartheta)|^2\big){\rm .}
\end{align*}
L'inégalité $\Delta^{(k)}(n,\boldsymbol{\chi},\vartheta)\leqslant \Delta^{(j_k)}(n,\boldsymbol{\chi},\vartheta)$ nous permet de conclure. 
\end{proof}
\subsection{Un théorème de Shiu et une formule de Parseval}
Le lemme suivant est une adaptation du lemme $2.2$ de \cite{B} qui nous permet de négliger la contribution des entiers $n$ pour lesquels le facteur $E(n)$, défini en \eqref{E}, est trop petit.

\begin{lemme}
\label{lemme 4}
Soient $A$, $c$ des constantes strictement positives, $\eta \in ]0,1[$, $g$ une fonction de $\mathcal{M}_A(c,\eta)$ et $Y\geqslant 27y$ où $y=y(g)>0$. Nous avons uniformément en $\sigma>0$ 
\begin{equation}
\label{eq 4.2}
\sum\limits_{\substack{n \geqslant 1 \\ E(n) \leqslant \sigma^{Y}}}{\frac{\mu^2(n)g(n)\tau_3(n)^{2}}{n^{1+\sigma}}} \ll 1 {\rm .}
\end{equation}
\end{lemme}

\begin{proof}
La preuve est analogue à celle du lemme $2.2$ de \cite{B}. Pour un tel choix de $Y$, dire qu'un entier $n$ vérifie l'inégalité
$$E(n)\leqslant \sigma^Y$$
implique qu'il possède deux diviseurs premiers entre eux $d_1,d_2$ vérifiant d'une part $d_1<d_2\leqslant (1+\sigma^Y)d_1$, et d'autre part $\frac{1}{2\sigma^Y}\leqslant d_1{\rm .}$ 
\goodbreak

\noindent
Ainsi, nous avons
\begin{align*}
\sum\limits_{\substack{n\geqslant 1 \\E(n)\leqslant \sigma^Y}}{\frac{\mu^2(n)g(n)\tau_3(n)^2}{n^{1+\sigma}}} & \leqslant\!\!\!\!\!\!\!\!\!\!\!\!\!\!\sum\limits_{\substack{m,d_1,d_2 \\ \frac{1}{2\sigma^Y}\leqslant d_1<d_2\leqslant (1+\sigma^Y)d_1 \\ (d_1,d_2)=1}}{\!\!\!\!\!\!\!\!\!\!\!\!\!\!\frac{\tau_3(m)^{2}\tau_3(d_1)^{2}\tau_3(d_2)^{2}}{(md_1d_2)^{1+\sigma}}g(m)g(d_1)g(d_2)} \\
& \ll \frac{1}{\sigma^{9y}}\sum\limits_{\frac{1}{2\sigma^Y}\leqslant d_1}{\frac{g(d_1)\tau_3(d_1)^{2}}{d_1^{2+\sigma}}\!\!\!\!\!\!\!\!\!\!\!\!\!\!\sum\limits_{\substack{d_1<d_2\leqslant (1+\sigma^{Y-y})d_1  \\ (d_1,d_2)=1}}{\!\!\!\!\!\!\!\!\!\!\!\!\!\!g(d_2)\tau_3(d_2)^{2}}}{\rm .} 
\end{align*}
\noindent
Un théorème de Shiu \cite{S} nous permet de majorer la somme intérieure, ce qui fournit l'inégalité
\begin{align*}
\sum\limits_{\substack{n\geqslant 1 \\E(n)\leqslant \sigma^Y}}{\frac{\mu^2(n)g(n)\tau_3(n)^2}{n^{1+\sigma}}} & \ll \sigma^{Y-10y}\sum\limits_{d}{\frac{g(d)\tau_3(d)^{2}}{d^{1+\sigma}}\log(d)^{8y}} \\
& \ll \sigma^{Y-27y} \\
& \ll 1 {\rm .}
\end{align*}
\end{proof}
Pour $\boldsymbol{\chi}=(\chi_1,\chi_2)$ un couple de caractères de Dirichlet, $n \in \mathbb{N}^*$ et $(\vartheta_1,\vartheta_2) \in~\mathbb{R}^2$, nous posons
\begin{equation}
\label{tautheta12}
\tau(n,\boldsymbol{\chi},\vartheta_1,\vartheta_2):=\sum\limits_{d_1d_2 \mid n}{\chi_1(d_1)d_1^{i\vartheta_1}\chi_2(d_2)d_2^{i\vartheta_2}}{\rm ,}
\end{equation}
et pour $\vartheta \in \mathbb{R}$, nous posons
\begin{equation}
\label{hattau1}
\widehat{\tau_{1}}(n,\boldsymbol{\chi},\vartheta):=\int_{\mathbb{R}^2}{\big\lvert\tau(n,\boldsymbol{\chi},\vartheta_1,\vartheta)\big\rvert^2\frac{{\rm d}\vartheta_1}{1+\vartheta_1^2}}{\rm .}
\end{equation}
\begin{lemme}
\label{lemme tau1}
Soient $\chi_1$ et $\chi_2$ deux caractères de Dirichlet et $n\geqslant 1$ un entier. Nous avons, uniformément pour $k \in \mathbb{N}$ et $v_1,v_2 \in [0,1]^2$
$$\int_{\mathbb{R}}{|\Delta^{(k)}(n,\boldsymbol{\chi},\vartheta,u,v_1,v_2)|^{2}{\rm d}u}\ll (k+1)^2\widehat{\tau_{1}}(n,\boldsymbol{\chi},\vartheta)$$
où $\Delta^{(k)}(n,\boldsymbol{\chi},\vartheta,u,v_1,v_2)$ est défini en \eqref{Dktheta}.
\end{lemme}

\begin{proof}
Posons
$$f:u\mapsto \Delta^{(k)}(n,\boldsymbol{\chi},\vartheta,u,v_1,v_2){\rm .}$$
Il s'agit d'une somme de fonctions portes, nous pouvons donc calculer sa transformée de Fourier, notée $\widehat{f}$.
\begin{align*}
\widehat{f}(\vartheta_1)&=\int_{\mathbb{R}}{f(u)\e^{-iu\vartheta_1}{\rm d}u}\\
&=\sum\limits_{d_1d_2\mid(n)}{\chi_1(d_1)\chi_2(d_2)d_2^{i\vartheta}\int_{\log d_1-v_1}^{\log d_1}{(u+v_1-v_2-\log d_1)^k \e^{-iu\vartheta_1}{\rm d}u}}\\
&=\sum\limits_{d_1d_2\mid(n)}{\chi_1(d_1)\chi_2(d_2)d_2^{i\vartheta}d_1^{-i\vartheta_1}}\int_{-v_1}^{0}{(u+v_1-v_2)^k \e^{-iu\vartheta_1}{\rm d}u}\\
&=\tau(n,\boldsymbol{\chi},\vartheta,-\vartheta_1)\int_{-v_1}^{0}{(u+v_1-v_2)^k \e^{-iu\vartheta_1}{\rm d}u}{\rm .}
\end{align*}
Or, pour tout $v_1,v_2 \in [0,1]^2$ et $k \in \mathbb{N}$, nous avons
$$\Big\lvert\int_{-v_1}^{0}{(u+v_1-v_2)^k \e^{-iu\vartheta_1}{\rm d}u}\Big\rvert \leqslant \min \Big(1,\frac{k+2}{\lvert \vartheta_1 \rvert}\Big){\rm .}$$
L'inégalité
$$\Big\lvert\int_{-v_1}^{0}{(u+v_1-v_2)^k \e^{-iu\vartheta_1}{\rm d}u}\Big\rvert \leqslant 1$$
est triviale puisqu'on intègre sur un intervalle de taille plus petite que $1$ une fonction majorée par $1$ en valeur absolue.
L'inégalité
$$\Big\lvert\int_{-v_1}^{0}{(u+v_1-v_2)^k \e^{-iu\vartheta_1}{\rm d}u}\Big\rvert \leqslant \frac{k+2}{\lvert \vartheta_1 \rvert}$$
est fournie par une intégration par partie. La formule de Parseval nous permet de conclure.
\end{proof}
\subsection{Des inégalités de Hölder}
Avant d'établir le prochain lemme, quelques notations sont nécessaires.\\
 Pour $n\geqslant 1$, $1\leqslant j \leqslant q$ des entiers, $w \in \mathbb{R}$ et $\boldsymbol{\chi}$ défini en \eqref{chi}, un couple de  caractères de Dirichlet, nous posons
\begin{equation}
\label{N1jq}
N_{1,j,q}(n,\boldsymbol{\chi},w):=\int_{[0,1]^2}{\int_{\mathbb{R}^2}{\Delta_3(n,\boldsymbol{\chi},\mathbf{u},\mathbf{v})^{2j}\Delta_3(n,\boldsymbol{\chi},u_1-w,u_2,\mathbf{v})^{2(q-j)}{\rm d}\mathbf{u}}{\rm d}\mathbf{v}} {\rm ,}
\end{equation}
\begin{equation}
\label{N2jq}
N_{2,j,q}(n,\boldsymbol{\chi},w):=\int_{[0,1]^2}{\int_{\mathbb{R}^2}{\Delta_3(n,\boldsymbol{\chi},\mathbf{u},\mathbf{v})^{2j}\Delta_3(n,\boldsymbol{\chi},u_1-w,u_2+w,\mathbf{v})^{2(q-j)}{\rm d}\mathbf{u}}{\rm d}\mathbf{v}} {\rm .}
\end{equation}
\noindent
De plus, nous posons
\begin{equation}
\label{eq tau2}
\widehat{\tau_{2}}(n,\boldsymbol{\chi}):=\int_{\mathbb{R}^2}{|\tau(n,\boldsymbol{\chi},\boldsymbol{\vartheta})|^2\prod\limits_{i=1}^2{\frac{1}{1+\vartheta_i^2}}{\rm d}\boldsymbol{\vartheta}} {\rm ,}
\end{equation}
où $\tau(n,\boldsymbol{\chi},\boldsymbol{\vartheta})$ est défini en \eqref{tautheta12}.
\begin{lemme}
\label{lemme 5}
Soient $A>0$, $c>0$ des constantes, $\eta \in ]0,1[$  et $\boldsymbol{\chi}$ un couple de caractères de Dirichlet. Pour toute fonction $g$ de $\mathcal{M}_A(c,\eta)$, $q\geqslant 1$, $1\leqslant j \leqslant q-1$, $n\geqslant 1$ et $x\geqslant 2$, nous avons 
\begin{align*}
\sum\limits_{p>x}{\frac{g(p)\log p}{p}N_{1,j,q}(n,\boldsymbol{\chi},\log p)}\ll & M_{2q}(n,\boldsymbol{\chi})^{(q-2)/(q-1)}C(n,\boldsymbol{\chi})\widehat{\tau_{2}}(n,\boldsymbol{\chi})^{1/(q-1)}\\
&+R_{2,q}(n,x){\rm ,}
\end{align*}
\noindent
où $C(n,\boldsymbol{\chi})$ est défini en \eqref{eq C},
$$R_{2,q}(n,x) \ll \e^{-c(\log x)^{\eta}}4^qM_{2q}(n)^{(2q-2)/(2q-1)}\tau_3(n)^{2q/(2q-1)} $$
\noindent
et
$$M_{h}(n):=\int_{\mathbb{R}^2}{\Delta_3(n,\mathbf{u})^h {\rm d}\mathbf{u}}{\rm ,} $$
avec $\Delta_3(n,\mathbf{u})$ défini en \eqref{eq Deltaru}. \\
De plus, si nous remplaçons $N_{1,j,q}(n,\boldsymbol{\chi},\log p)$ par  $N_{2,j,q}(n,\boldsymbol{\chi},\log p)$, alors la majoration reste valable en remplaçant $C(n,\boldsymbol{\chi})$ par  $D(n,\boldsymbol{\chi})$, défini en \eqref{eq D}.
\end{lemme}

\begin{proof}

Nous raisonnons uniquement sur la fonction $N_{1,j,q}$, la démonstration pour la fonction $N_{2,j,q}$ étant analogue. Pour tout $n\geqslant 1$, nous avons
\begin{align*}
 \sum\limits_{p>x}{\frac{g(p)\log p}{p}N_{1,j,q}(n,\boldsymbol{\chi},\log p)} =\int_{[0,1]^2}{\int_{\mathbb{R}^2}{\big|\Delta_3(n,\boldsymbol{\chi},\mathbf{u},\mathbf{v})\big|^{2j}S_{2(q-j)}{\rm d} \mathbf{u}}{\rm d} \mathbf{v}}{\rm ,}  
\end{align*}
\noindent
où 
$$S_{2h}:=\sum\limits_{p>x}{\frac{g(p)\log (p)}{p}|\Delta_3(n,\boldsymbol{\chi},u_1-\log(p),u_2,\mathbf{v})|^{2h}} {\rm .}$$

\noindent
Un développement de la somme définissant $\Delta_3$ fournit l'égalité suivante
\begin{align*}
S_{2h}=&\sum\limits_{d_1,\ldots, d_{2h} \mid n}{\chi_1(d_1\cdots d_{h})\overline{\chi_1(d_{h+1}\cdots d_{2h})}\prod\limits_{i=1}^{h}{\Delta\Big(\frac{n}{d_i},\chi_2,u_2,v_2\Big)\overline{\Delta\Big(\frac{n}{d_{h+i}},\chi_2,u_2,v_2\Big)}}}\\
&\sum\limits_{\substack{u_1-\log \min d_r<\log p\leqslant u_1+v_1-\log \max d_r \\ p>x}}{\frac{g(p)\log p}{p}} {\rm .}
\end{align*}
\noindent
Par sommation d'Abel, la somme intérieure vaut, dans le cas où~$\log( \max d_r/\min d_r)\leqslant~v_1$ 
$$y\int_{u_1-\log \min d_r}^{u_1+v_1-\log \max d_r}{1_{[0,1]}\Big(\frac{\log x}{t}\Big){\rm d}t}+O\big(\e^{-c(\log x)^{\eta}}\big)$$
où $y=y(g)$.
\noindent
De plus, elle est nulle si l'inégalité n'est pas respectée.
En adaptant le théorème 72 de \cite{HT}, nous obtenons, pour tout $h\geqslant 1$
$$\sum\limits_{\substack{d_1,\ldots, d_h \mid n \\ \log(\max d_i)-\log(\min d_i)\leqslant v_1}}{\prod\limits_{i=1}^{h}{\Delta\Big(\frac{n}{d_i},u_2\Big)}}\leqslant 2^hM_{h}(n,u_2){\rm ,}$$
\noindent
où 
$$M_h(n,u_2)=\int_{\mathbb{R}}{\Delta_3(n,\mathbf{u})^h{\rm d}u_1} {\rm .}$$
\noindent
Ainsi,
\begin{align*}
S_{2h} & =y\int_{\log x}^{+\infty}{|\Delta_3(n,\boldsymbol{\chi},u_1-t,u_2,\mathbf{v})|^{2h}{\rm d}t}+O\big(2^{2h}M_{2h}(n,u_2)\e^{-c(\log x)^{\eta}}\big) \\
& \leqslant y\int_{\mathbb{R}}{|\Delta_3(n,\boldsymbol{\chi},u_1-t,u_2,\mathbf{v})|^{2h}{\rm d}t}+O\big(2^{2h}M_{2h}(n,u_2)\e^{-c(\log x)^{\eta}}\big){\rm .}
\end{align*}
\noindent
Nous obtenons donc
\begin{equation}
\label{eq maj 5}
\begin{split}
&\sum\limits_{p>x}{\frac{g(p)\log p}{p}N_{1,j,q}(n,\log p)}\\
&\leqslant y Q_j+O\Big(2^{2(q-j)}\int_{\mathbb{R}}{M_{2(q-j)}(n,u_2)M_{2j}(n,u_2){\rm d}u_2}\e^{-c(\log x)^{\eta}}\Big){\rm ,}
\end{split}
\end{equation}
\noindent
où
$$Q_j:=\int_{[0,1]^2}{\int_{\mathbb{R}}\bigg({\int_{\mathbb{R}}{|\Delta_3(n,\boldsymbol{\chi},\mathbf{u'},\mathbf{v})|^{2j}{\rm d}u_1'}\int_{\mathbb{R}}{|\Delta_3(n,\boldsymbol{\chi},\mathbf{u},\mathbf{v})|^{2(q-j)}{\rm d}u_1}\bigg){\rm d}u_2}{\rm d}\mathbf{v}}{\rm ,}$$
avec $\mathbf{u'}=(u_1',u_2)$.
\goodbreak

\noindent
 Nous appliquons alors l'inégalité de Hölder avec les exposants $\frac{q-1}{q-j}$ et $\frac{q-1}{j-1}$ à
$$\int_{\mathbb{R}}{|\Delta_3(n,\boldsymbol{\chi},\mathbf{u},\mathbf{v})|^{2j}{\rm d}u_1}$$
\noindent
en écrivant $j=q\frac{j-1}{q-1}+\frac{q-j}{q-1}$ pour obtenir
\begin{align*}
& \int_{\mathbb{R}}{|\Delta_3(n,\boldsymbol{\chi},\mathbf{u},\mathbf{v})|^{2j}{\rm d}u_1}\\ 
& \leqslant \Big(\int_{\mathbb{R}}{|\Delta_3(n,\boldsymbol{\chi},\mathbf{u},\mathbf{v})|^{2q}{\rm d}u_1}\Big)^{(j-1)/(q-1)}\Big(\int_{\mathbb{R}}{|\Delta_3(n,\boldsymbol{\chi},\mathbf{u},\mathbf{v})|^{2}{\rm d}u_1}\Big)^{(q-j)/(q-1)} {\rm .}
\end{align*}
\noindent
En appliquant le même raisonnement en remplaçant $j$ par $q-j$, nous obtenons finalement
\begin{align*}
Q_j & \ll M_{2q}(n,\boldsymbol{\chi})^{(q-2)/(q-1)}\Big(\int_{[0,1]^2}{\int_{\mathbb{R}}{\Big(\int_{\mathbb{R}}{|\Delta_3(n,\boldsymbol{\chi},\mathbf{u},\mathbf{v})|^2{\rm d}u_1}\Big)^q{\rm d}u_2}{\rm d}\mathbf{v}}\Big)^{1/(q-1)} \\
 & \ll  M_{2q}(n,\boldsymbol{\chi})^{(q-2)/(q-1)}C(n,\boldsymbol{\chi})M_2(n,\boldsymbol{\chi})^{1/(q-1)}{\rm .}
\end{align*}
\noindent
Une adaptation du lemme $2.3$ de  \cite{B} fournit
$$M_2(n,\boldsymbol{\chi})\ll \widehat{\tau_{2}}(n,\boldsymbol{\chi}) {\rm .}$$
\noindent
Pour traiter la contribution du terme d'erreur du majorant de \eqref{eq maj 5}, nous effectuons un raisonnement similaire. Le terme~$|\Delta_3(n,\boldsymbol{\chi},\mathbf{u},\mathbf{v})|$ est clairement majoré par $\Delta_3(n,\mathbf{u})$. Puis, une inégalité de Hölder avec exposants $\frac{2q-1}{2(q-j)}$ et $\frac{2q-1}{2j-1}$ montre l'inégalité 
\begin{align*}
&\int_{\mathbb{R}}{\Delta_3(n,\mathbf{u})^{2j}{\rm d}u_1} \\
\leqslant& \Big(\int_{\mathbb{R}}{\Delta_3(n,\mathbf{u})^{2}{\rm d}u_1}\Big)^{2(q-j)/(2q-1)}\Big(\int_{\mathbb{R}}{\Delta_3(n,\mathbf{u})^{2q}{\rm d}u_1}\Big)^{(2h-1)/(2q-1)} {\rm .}
\end{align*}
\noindent
Le même raisonnement en remplaçant $j$ par $q-j$ fournit
\begin{align*}
& \int_{\mathbb{R}}{\int_{\mathbb{R}}{\Delta_3(n,\mathbf{u})^{2j}{\rm d}u_1}\int_{\mathbb{R}}{\Delta_3(n,\mathbf{u})^{2(q-j)}{\rm d}u_1}{\rm d}u_2} \\ 
\leqslant &\int_{\mathbb{R}}{\Big(\int_{\mathbb{R}}{\Delta_3(n,\mathbf{u})^{2q}{\rm d}u_1}\Big)^{(2q-2)/(2q-1)}\Big(\int_{\mathbb{R}}{\Delta_3(n,\mathbf{u}){\rm d}u_1}\Big)^{2q/(2q-1)}{\rm d}u_2} \\
\leqslant & M_{2q}(n)^{(2q-2)/(2q-1)}\Big(\int_{\mathbb{R}}{\Big(\int_{\mathbb{R}}{\Delta_3(n,\mathbf{u}){\rm d}u_1}\Big)^{2q}{\rm d}u_2}\Big)^{1/(2q-1)} {\rm .}
\end{align*}
\noindent
Pour conclure, nous utilisons l'estimation 
\begin{align*}
\int_{\mathbb{R}}{\Big(\int_{\mathbb{R}}{\Delta_3(n,\mathbf{u}){\rm d}u_1}\Big)^{2q}{\rm d}u_2}&= \sum\limits_{d_1,\ldots,d_{2q}\mid n}{\prod\limits_{i=1}^{2q}{\tau\Big(\frac{n}{d_i}\Big)}\max\bigg(0,1-\log\Big(\frac{\max(d_i)}{\min(d_i)}\Big)\bigg)} \\
& \leqslant \sum\limits_{d_1,\ldots,d_{2q}\mid n}{\prod\limits_{i=1}^{2q}{\tau\Big(\frac{n}{d_i}\Big)}} \\
& \leqslant \tau_3(n)^{2q} {\rm .}
\end{align*}
\end{proof}
Pour le prochain lemme, nous notons, lorsque $\chi$ est un caractère de Dirichlet non principal, 
\begin{equation}
\label{Ndag}
N^{\dagger}_{j,q}(n,\chi,w):=\sum\limits_{d\mid n}{\int_{0}^{1}{\int_{\mathbb{R}}{|\Delta(d,\chi,u,v)|^{2j}|\Delta(d,\chi,u-w,v)|^{2(q-j)}{\rm d}u}{\rm d}v}}{\rm ,}
\end{equation}
et
\begin{equation}
\label{eq tau0}
\widehat{\tau_{0}}(n,\chi):=\Big(\sum\limits_{d\mid n}{\Big(\int_{\mathbb{R}}{|\tau(d,\chi,\vartheta)|^2\frac{{\rm d}\vartheta}{1+\vartheta^2}}\Big)^q}\Big)^{1/q}{\rm ,}
\end{equation}
où $\tau(n,\chi,\vartheta)$ est défini en \eqref{eq tautheta}. Enfin, nous définissons
$$M^{\dagger}_{h}(n):=\sum\limits_{d\mid n}{M_h(d)}{\rm ,}$$
où
$$M_h(d):=\int_{\mathbb{R}}{\Delta(d,u)^h {\rm d}u}{\rm .}$$
\begin{lemme}
\label{lemme 6}
Soient $A$, $c$ des constantes strictement positives,  $\eta \in ]0,1[$ et $\chi$ un caractère de Dirichlet non principal. Pour toute fonction $g$ de $\mathcal{M}_A(c,\eta)$, $q\geqslant 1$, $1\leqslant j \leqslant q-1$, $n\geqslant 1$ et $x\geqslant 2$, nous avons 
\begin{align*}
\sum\limits_{p>x}{\frac{g(p)\log p}{p}N^{\dagger}_{j,q}(n,\chi,\log p)}\ll & M^{\dagger}_{2q}(n,\chi)^{q-2/q-1}\widehat{\tau_{0}}(n,\chi)^{q/q-1}\\
&+R^{\dagger}_{q}(n,x){\rm ,}
\end{align*}
\noindent
où
$$R^{\dagger}_{q}(n,x) \ll \e^{-c(\log x)^{\eta}}4^qM^{\dagger}_{2q}(n)^{(2q-2)/(2q-1)}\tau_3(n)^{2q/(2q-1)}{\rm ,}$$
$N^{\dagger}_{j,q}(n,\chi,\log p)$ est défini en \eqref{Ndag} et $\widehat{\tau_0}(n,\chi)$ est défini en \eqref{eq tau0}.
\end{lemme}

\begin{proof}
Nous reprenons la démonstration du Lemme \ref{lemme 5}. Pour tout~$n\geqslant~1$, nous avons
\begin{align*}
 \sum\limits_{p>x}{\frac{g(p)\log p}{p}N^{\dagger}_{j,q}(n,\chi,\log p)} =\sum\limits_{d \mid n}{\int_{0}^{1}{\int_{\mathbb{R}}{\big|\Delta(d,\chi,u,v)\big|^{2j}S^*_{2(q-j)}(d){\rm d} u}{\rm d} v} } {\rm ,}
\end{align*}
\noindent
où 
$$S^*_{2h}(d):=\sum\limits_{p>x}{\frac{g(p)\log (p)}{p}|\Delta(d,\chi,u-\log(p),v)|^{2h}} {\rm .}$$
\noindent
Le développement de $S^*_{2h}(d)$ et l'application du théorème des nombres premiers fournissent
$$S^*_{2h}(d) \leqslant y\int_{\mathbb{R}}{|\Delta(d,\chi,u-t,v)|^{2h}{\rm d}t}+O\Big(2^{2h}M_{2h}(d)\e^{-c(\log x)^{\eta}}\Big)$$
où $y=y(g)$. Nous obtenons donc
$$\sum\limits_{p>x}{\frac{g(p)\log p}{p}N^{\dagger}_{j,q}(n,\chi,\log p)}\leqslant  yQ^{\dagger}_j+O\Big(2^{2(q-j)}\sum\limits_{d\mid n}{M_{2j}(d)M_{2(q-j)}(d)}\e^{-c(\log x)^{\eta}}\Big){\rm ,}$$
où
$$Q^{\dagger}_j:=\sum\limits_{d\mid n}{\int_0^1{\int_{\mathbb{R}}{|\Delta(d,\chi,u,v)|^{2j}{\rm d}u}\int_{\mathbb{R}}{|\Delta(d,\chi,u,v)|^{2(q-j)}{\rm d}u}{\rm d}v}} {\rm .}$$
\goodbreak

\noindent
Des inégalités de Hölder impliquent
\begin{align*}
Q^{\dagger}_j & \ll \sum\limits_{d \mid n}{M_{2q}(d,\chi)^{(q-2)/(q-1)}\widehat{\tau}(d,\chi)^{q/(q-1)}}\\
    & \ll M^{\dagger}_{2q}(n,\chi)^{(q-2)/(q-1)}\widehat{\tau_{0}}(n,\chi)^{q/(q-1)}{\rm ,}
\end{align*}
\noindent
où
$$\widehat{\tau}(n,\chi)=\int_{\mathbb{R}}{|\tau(n,\chi,\vartheta)|^2\frac{{\rm d}\vartheta}{1+\vartheta^2}}{\rm .}$$
\noindent
Le terme d'erreur se traite de manière analogue au Lemme \ref{lemme 5}.
\end{proof}
Pour le prochain lemme, nous définissons, pour $k \in \mathbb{N}$, $n \geqslant 1$, $\boldsymbol{\chi}$ un couple de caractères de Dirichlet, $1\leqslant j<q$ et $(w,\vartheta) \in \mathbb{R}^2$ 
\begin{align}
\begin{split}
\label{Nijkq}
&N^{(k)}_{j,q}(n,\boldsymbol{\chi},w,\vartheta)\\
:=&\int_{[0,1]^2}{\int_{\mathbb{R}}{|\Delta^{(k)}(n,\boldsymbol{\chi},\vartheta,u,v_1,v_2)|^{2j}|\Delta^{(k)}(n,\boldsymbol{\chi},\vartheta,u-w,v_1,v_2)|^{2(q-j)}{\rm d}u}{\rm d}v_1{\rm d}v_2}{\rm ,}
\end{split}
\end{align}
où $\Delta^{(k)}(n,\boldsymbol{\chi},\vartheta,u,v_1,v_2)$ est défini en \eqref{Dktheta}. Enfin, nous notons
\begin{equation}
\label{eq M1}
M^{(1)}_{h}(n,\tau):=\int_{\mathbb{R}}{\Delta(n,\tau,u,1)^h {\rm d}u}
\end{equation}
où $\Delta(n,\tau,u,1)$ est défini en \eqref{eq Deltafuv} et $\tau$ est la fonction nombre de diviseurs.\\
\begin{lemme}
\label{lemme 7}
Soient $A$, $c$ des constantes strictement positives, $\eta \in ]0,1[$, $k\in \mathbb{N}$ et $\boldsymbol{\chi}$ un couple de caractères de Dirichlet. Pour toute fonction $g$ de $\mathcal{M}_A(c,\eta)$, $q\geqslant 1$, $1\leqslant j \leqslant q-1$, $n\geqslant 1$, $\vartheta \in \mathbb{R}$ et $x\geqslant 2$, nous avons
\begin{align*}
\sum\limits_{p>x}{\frac{g(p)\log p}{p}N^{(k)}_{j,q}(n,\boldsymbol{\chi},\vartheta,\log p)}\ll & (k+1)^{2q/(q-1)}M^{(k)}_{2q}(n,\boldsymbol{\chi},\vartheta)^{q-2/q-1}\widehat{\tau_{1}}(n,\boldsymbol{\chi},\vartheta)^{q/q-1}\\
&+R_{k,q}(n,x){\rm ,}
\end{align*}
\noindent
où $M_{2q}^{(k)}(n,\boldsymbol{\chi},\vartheta)$ est défini en \eqref{eq Mijtheta}, $\widehat{\tau_{1}}(n,\boldsymbol{\chi},\vartheta)$ est défini en \eqref{hattau1}, et
$$R_{k,q}(n,x) \ll (k+1)q\e^{-c(\log x)^{\eta}}4^qM^{(1)}_{2q}(n,\tau)^{(2q-2)/(2q-1)}\tau_3(n)^{2q/(2q-1)}{\rm .} $$

\end{lemme}

\begin{proof}
Pour tout $n\geqslant 1$, nous avons
\begin{align*}
 \sum\limits_{p>x}{\frac{g(p)\log p}{p}N^{(k)}_{j,q}(n,\boldsymbol{\chi},\vartheta,\log p)} =\int_{[0,1]^2}{\int_{\mathbb{R}}{\big|\Delta^{(k)}(n,\boldsymbol{\chi},\vartheta,u,v_1,v_2)\big|^{2j}S^{(k)}_{2(q-j)}{\rm d} u}{\rm d}v_1{\rm d}v_2}{\rm ,}  
\end{align*}
\noindent
où 
$$S^{(k)}_{2h}:=\sum\limits_{p>x}{\frac{g(p)\log (p)}{p}|\Delta^{(k)}(n,\boldsymbol{\chi},\vartheta,u-\log(p),v_1,v_2)|^{2h}} {\rm .}$$
\noindent
Un développement de la somme définissant $\Delta^{(k)}$ fournit l'égalité suivante
\begin{align*}
S^{(k)}_{2h}=&\sum\limits_{d_1,\ldots, d_{2h} \mid n}{\chi_1(d_1\cdots d_{h})\overline{\chi_1(d_{h+1}\cdots d_{2h})}\prod\limits_{i=1}^{h}{\tau\Big(\frac{n}{d_i},\chi_2,\vartheta\Big)\overline{\tau\Big(\frac{n}{d_{h+i}},\chi_2,\vartheta\Big)}}}\\
&\sum\limits_{\substack{u-\log \min d_r<\log p\leqslant u+v_1-\log \max d_r \\ p>x}}{\frac{g(p)\log p}{p}\prod\limits_{i=1}^{2h}{(u+v_1-v_2-\log p-\log d_i)^k}} {\rm .}
\end{align*}
\noindent
Par sommation d'Abel, la somme intérieure vaut, dans le cas où~$\log( \max d_r/\min d_r)\leqslant~v_1$ 
$$y\int_{u_1-\log \min d_r}^{u_1+v_1-\log \max d_r}{1_{[0,1]}\Big(\frac{\log x}{t}\Big)\prod\limits_{i=1}^{2h}{(u+v_1-v_2-t-\log d_i)^k}{\rm d}t}+O\big((k+1)h\e^{-c(\log x)^{\eta}}\big)$$
où $y=y(g)$.
Le point important étant que tous les termes 
$$(u+v_1-v_2-t-\log d_i)^k$$
sont majorés par $1$ en valeur absolue dans les intervalles, en $t$, considérés, ce qui permet d'obtenir le terme d'erreur ci-dessus. \\
\noindent
De plus, elle est nulle si l'inégalité n'est pas respectée.
En adaptant le théorème 72 de \cite{HT}, nous obtenons, pour tout $h\geqslant 1$
$$\sum\limits_{\substack{d_1,\ldots, d_h \mid n \\ \log(\max d_i)-\log(\min d_i)\leqslant v_1}}{\prod\limits_{i=1}^{h}{\tau\Big(\frac{n}{d_i}\Big)}}\leqslant 2^hM^{(1)}_{h}(n,\tau){\rm ,}$$
où $M^{(1)}_{h}(n,\tau)$ est défini en \eqref{eq M1}.
Ainsi,
\begin{align*}
S^{(k)}_{2h} & =y\int_{\log x}^{+\infty}{|\Delta^{(k)}(n,\boldsymbol{\chi},\vartheta,u-t,v_1,v_2)|^{2h}{\rm d}t}+O\big((k+1)h2^{2h}M^{(1)}_{2h}(n,\tau)\e^{-c(\log x)^{\eta}}\big) \\
& \leqslant y\int_{\mathbb{R}}{|\Delta^{(k)}(n,\boldsymbol{\chi},\vartheta,u-t,v_1,v_2)|^{2h}{\rm d}t}+O\big((k+1)h2^{2h}M^{(1)}_{2h}(n,\tau)\e^{-c(\log x)^{\eta}}\big){\rm .}
\end{align*}
\noindent
Nous obtenons donc
\begin{equation}
\label{eq maj 5bis}
\begin{split}
&\sum\limits_{p>x}{\frac{g(p)\log p}{p}N^{(k)}_{j,q}(n,\vartheta,\log p)}\\
&\leqslant y Q_j+O\Big((k+1)(q-j)2^{2(q-j)}M^{(1)}_{2(q-j)}(n,\tau)M^{(1)}_{2j}(n,\tau)\e^{-c(\log x)^{\eta}}\Big){\rm ,}
\end{split}
\end{equation}
\noindent
où
$$Q_j:=\int_{[0,1]^2}{\bigg(\int_{\mathbb{R}}{|\Delta^{(k)}(n,\boldsymbol{\chi},\vartheta,u',v_1,v_2)|^{2j}{\rm d}u'}\int_{\mathbb{R}}{|\Delta^{(k)}(n,\boldsymbol{\chi},\vartheta,u,v_1,v_2)|^{2(q-j)}{\rm d}u}\bigg){\rm d}v_1{\rm d}v_2}{\rm .}$$
\pagebreak

\noindent
 Nous appliquons alors l'inégalité de Hölder avec les exposants $\frac{q-1}{q-j}$ et $\frac{q-1}{j-1}$ à
$$\big \lvert\Delta^{(k)}(n,\boldsymbol{\chi},\vartheta,u,v_1,v_2)\big \rvert^{2j}$$
\noindent
en écrivant $j=q\frac{j-1}{q-1}+\frac{q-j}{q-1}$ pour obtenir
\begin{align*}
& \int_{\mathbb{R}}{|\Delta^{(k)}(n,\boldsymbol{\chi},\vartheta,u,v_1,v_2)|^{2j}{\rm d}u}\\ 
& \leqslant \Big(\int_{\mathbb{R}}{|\Delta^{(k)}(n,\boldsymbol{\chi},\vartheta,u,v_1,v_2)|^{2q}{\rm d}u}\Big)^{(j-1)/(q-1)}\Big(\int_{\mathbb{R}}{|\Delta^{(k)}(n,\boldsymbol{\chi},\vartheta,u,v_1,v_2)|^{2}{\rm d}u}\Big)^{(q-j)/(q-1)} {\rm .}
\end{align*}
\noindent
En appliquant le même raisonnement en remplaçant $j$ par $q-j$, nous obtenons finalement
\begin{align*}
Q_j & \ll M^{(k)}_{2q}(n,\boldsymbol{\chi},\vartheta)^{(q-2)/(q-1)}\Big(\int_{[0,1]^2}{\Big(\int_{\mathbb{R}}{|\Delta^{(k)}(n,\boldsymbol{\chi},\vartheta,u,v_1,v_2)|^2{\rm d}u}\Big)^q{\rm d}v_1{\rm d}v_2}\Big)^{1/(q-1)} {\rm .}
\end{align*}
\noindent
Le Lemme \ref{lemme tau1}  fournit la majoration suivante, uniformément pour $v_1,v_2 \in [0,1]^2$.
$$\int_{\mathbb{R}}{|\Delta^{(k)}(n,\boldsymbol{\chi},\vartheta,u,v_1,v_2)|^2{\rm d}u}\ll (k+1)^2 \widehat{\tau_{1}}(n,\boldsymbol{\chi},\vartheta) {\rm .}$$
\noindent
Pour traiter la contribution du terme d'erreur du majorant de \eqref{eq maj 5bis}, nous appliquons une inégalité de Hölder avec exposants $\frac{2q-1}{2(q-j)}$ et $\frac{2q-1}{2j-1}$ afin d'obtenir
\begin{align*}
&\int_{\mathbb{R}}{\Delta(n,\tau,u)^{2j}{\rm d}u} \\
\leqslant& \Big(\int_{\mathbb{R}}{\Delta(n,\tau,u)^{2}{\rm d}u}\Big)^{2(q-j)/(2q-1)}\Big(\int_{\mathbb{R}}{\Delta(n,\tau,u)^{2q}{\rm d}u}\Big)^{(2h-1)/(2q-1)} {\rm .}
\end{align*}
\noindent
Le même raisonnement en remplaçant $j$ par $q-j$ fournit
\begin{align*}
& M^{(1)}_{2j}(n,\tau)M^{(1)}_{2(q-j)}(n,\tau) \\ 
\leqslant &\Big(\int_{\mathbb{R}}{\Delta(n,\tau,u)^{2q}{\rm d}u}\Big)^{(2q-2)/(2q-1)}\Big(\int_{\mathbb{R}}{\Delta(n,\tau,u){\rm d}u}\Big)^{2q/(2q-1)}\\
\leqslant & M^{(1)}_{2q}(n,\tau)^{(2q-2)/(2q-1)}\tau_3(n)^{2q/(2q-1)} {\rm .}
\end{align*}
\end{proof}

\subsection{Estimations de sommes sur les nombres premiers}
Les lemmes présentés dans cette sous-section sont des lemmes arithmétiques fondamentaux pour les différentes démonstrations de cet article.
\begin{lemme}
\label{lemme 8}
Soient $\chi$ un caractère de Dirichlet non principal d'ordre $r$, $A$, $c$ des constantes strictement positives et $\eta \in ]0,1[$ . Pour $T>1$ et $g$ appartenant $\mathcal{M}_A(\chi,c,\eta)$, nous avons, uniformément pour $\sigma>0$, $1\leqslant q\leqslant \frac{1}{\sigma}$ et $|\vartheta| \leqslant q\sigma$
\begin{equation}
\label{eq 8.1}
\sum\limits_{p>T}{\max\big(1,|1+\chi(p)p^{i\vartheta}|^2\big)g(p)\frac{\log p}{p^{1+\sigma}}}\leqslant \frac{3y}{\sigma}+O(1){\rm ,}
\end{equation}
 où $y=y(g)$. Nous avons uniformément pour $q\sigma \leqslant |\vartheta| \leqslant \e^{c(\log T)^{\eta}}$
\begin{equation}
\label{eq 8.2}
\sum\limits_{p>T}{\max\big(1,|1+\chi(p)p^{i\vartheta}|^2\big)g(p)\frac{\log p}{p^{1+\sigma}}}=\frac{(\rho+2)y+O(1/q)}{\sigma}{\rm ,}
\end{equation}
où $\rho$ est défini en \eqref{eq lambda}, et uniformément pour $|\vartheta| \geqslant \e^{c(\log T)^{\eta}}$
\begin{equation}
\label{eq 8.3}
\sum\limits_{p>T}{|1+\chi(p)p^{i\vartheta}|^2g(p)\frac{\log p}{p^{1+\sigma}}}\leqslant \frac{4y}{\sigma}+O(1){\rm .}
\end{equation}
\end{lemme}

\begin{proof}
 
Lorsque $|\vartheta|\leqslant q\sigma$, nous utilisons l'inégalité suivante
$$\max\big(1,|1+\chi(p)p^{i\vartheta}|^2\big)\leqslant 1+|1+\chi(p)p^{i\vartheta}|^2{\rm ,}$$
ce qui fournit
\begin{align*}
\sum\limits_{p>T}{\max\big(1,|1+\chi(p)p^{i\vartheta}|^2\big)g(p)\frac{\log p}{p^{1+\sigma}}}& \leqslant \sum\limits_{p>T}{\big(3+\chi(p)p^{i\vartheta}+\overline{\chi(p)}p^{-i\vartheta}\big)g(p)\frac{\log p}{p^{1+\sigma}}}{\rm .}
\end{align*}
\noindent
D'après \eqref{eq def}, nous avons d'une part
$$\sum\limits_{p>T}{3g(p)\frac{\log p}{p^{1+\sigma}}}\leqslant \frac{3y}{\sigma}+O(1)$$
\noindent
et d'autre part, d'après \eqref{eq g}
\begin{align*}
\sum\limits_{p>T}{\chi(p)p^{i\vartheta}g(p)\frac{\log p}{p^{1+\sigma}}} &=\sum\limits_{k=0}^{r-1}{\sum\limits_{\substack{p>T \\ \chi(p)=\zeta^k}}{\zeta^kp^{i\vartheta}g(p)\frac{\log p}{p^{1+\sigma}}}}\\
&=\frac{y}{r}\sum\limits_{k=0}^{r-1}{\zeta^{k}\Big(\int_{\log T}^{+\infty}{\e^{(-\sigma+i\vartheta)u}{\rm d}u}+O(|\vartheta|\e^{-c(\log T)^{\eta}})\Big)}\\
&=\frac{y}{r}\sum\limits_{k=0}^{r-1}{\zeta^{k}\frac{T^{-\sigma+i\vartheta}}{\sigma-i\vartheta}}+O\big(|\vartheta|\e^{-c(\log T)^{\eta}}\big) \\
&=O\big(|\vartheta|\e^{-c(\log T)^{\eta}}\big) 
\end{align*}
ce qui fournit \eqref{eq 8.1} puisque $|\vartheta| \leqslant \e^{c(\log T)^{\eta}}$. \\
\indent
Si $q\sigma \leqslant |\vartheta| \leqslant \e^{c(\log T)^{\eta}}$, nous observons que la fonction
$$t \mapsto \max\big(1,|1+\e^{it}|^2\big)$$
est périodique, de période $2\pi$, et de moyenne $\rho+2$. Une intégration par partie semblable au lemme \cRM{3}. 4.13 de \cite{T} fournit
$$\sum\limits_{p>T}{\max\big(1,|1+\chi(p)p^{i\vartheta}|^2\big)g(p)\frac{\log p}{p^{1+\sigma}}}= \frac{(\rho+2)y}{\sigma}+O\Big(\frac{1}{|\vartheta|}+\frac{|\vartheta|+1}{\e^{c(\log T)^{\eta}}}\Big){\rm ,}$$
ce qui implique \eqref{eq 8.2}.
Dans le dernier cas, nous nous contentons de la majoration triviale
$$|1+\chi(p)p^{i\vartheta}|^2\leqslant 4 {\rm .}$$
\end{proof}

Pour $r\geqslant 1$, nous notons
\begin{equation}
\label{eq kappa}
\kappa(r):=\frac{1}{r}\sum\limits_{k=0}^{r-1}{\max\big(1,|1+\zeta^k|^2\big)}
\end{equation}
où $\zeta$ est défini en \eqref{eq zeta}.
\begin{lemme}
\label{lemme 9}
Soient $\chi$ un caractère de Dirichlet non principal d'ordre $r$, $A$, $c$ des constantes strictement positives, $\eta \in ]0,1[$ et $g$ appartenant $\mathcal{M}_A(\chi,c,\eta)$ . Nous avons, uniformément pour $x\geqslant 16$ et $0<|\vartheta|\leqslant 1$  ,
\begin{align*}
\sum\limits_{p\leqslant x}{g(p)\frac{\max\big(1,|1+\chi(p)p^{i\vartheta}|^2\big)}{p}}=& \;\;\;\;y\kappa(r)\log\Big(\frac{\log x}{1+|\vartheta|\log x}\Big) \\
&+y(\rho+2) \log\big(1+|\vartheta|\log x\big)+O(1){\rm ,}
\end{align*}
\noindent 
où $y=y(g)$, et uniformément pour $|\vartheta|>1$
$$\sum\limits_{p\leqslant x}{g(p)\frac{\max\big(1,|1+\chi(p)p^{i\vartheta}|^2\big)}{p}}\leqslant y(\rho+2) \log\big(1+|\vartheta|\log x\big)+B\log_2(2+|\vartheta|){\rm ,}$$
\noindent
où $\rho$ est défini en \eqref{eq lambda}.
\end{lemme}

\begin{proof}
La preuve est analogue à celle du lemme $2.5$ de \cite{B}. La fonction 
$$t\mapsto \max\big(1,|1+\e^{it}|^2\big)$$ 
est périodique, de moyenne $\rho+2$. Ainsi, si nous nous donnons un paramètre $w\leqslant x$, une intégration par partie semblable à celle du lemme \cRM{3}. 4.13 de \cite{T} fournit
$$\sum\limits_{w<p\leqslant x}{\frac{g(p)\max\big(1,|1+\chi(p)p^{i\vartheta}|^2\big)}{p}}=y(\rho+2) \log\Big(\frac{\log x}{\log w}\Big)+O\Big(\frac{1}{|\vartheta| \log w}+\frac{1+|\vartheta|}{\e^{c(\log w)^{\eta}}}\Big){\rm .}$$
\noindent
Pour les entiers $p\leqslant w$, nous nous contentons d'approcher $p^{i\vartheta}$ par $1$, l'erreur que nous commettons est alors $O(|\vartheta| \log w)$. Ainsi
$$\sum\limits_{p\leqslant w}{g(p)\frac{\max\big(1,|1+\chi(p)p^{i\vartheta}|^2\big)}{p}}=y\kappa(r)\log_2(w)+O(1+|\vartheta| \log w){\rm .}$$
Il s'agit désormais de choisir le paramètre $w$ afin d'obtenir les formules souhaitées. Pour $|\vartheta| \leqslant \frac{1}{\log x}$, nous choisissons $w=x$.\\ \indent
Pour $1/\log x \leqslant |\vartheta| \leqslant 1$, nous choisissons $w=\exp\big(\frac{\log x}{1+|\vartheta| \log x}\big)$. \\
\indent
Pour $1 \leqslant |\vartheta| \leqslant \exp\big(c(\log x)^{\eta}\big)-2$, nous choisissons $w=\exp\big((2/c)(\log (2+~|\vartheta|))^{1/\eta}\big)$. \\
\indent
Dans le dernier cas, nous nous contentons d'observer que la somme à majorer est $\ll \log_2(x)\ll \log_2(2+|\vartheta|)$.
\end{proof}

\begin{lemme}
\label{lemme 10}
Soient $\chi$ un caractère de Dirichlet d'ordre $r>1$ $A$, $c$ des constantes strictement positives, $\eta \in ]0,1[$ et $g$ appartenant à $\mathcal{M}_A(\chi,c,\eta)$. Nous avons uniformément pour  $x\geqslant 16$ et $|\vartheta|\leqslant 1$
$$\sum\limits_{p\leqslant x}{g(p)\frac{\chi(p)}{p}p^{i\vartheta}} =O(1)$$
\noindent
et uniformément pour $|\vartheta| >1$
$$\sum\limits_{p\leqslant x}{g(p)\frac{\chi(p)}{p}p^{i\vartheta}} \leqslant B\log_2(2+|\vartheta|){\rm .}$$
\end{lemme}

\begin{proof}
Notant $\zeta$ le complexe défini en \eqref{eq zeta}, nous avons
$$\sum\limits_{p\leqslant x}{g(p)\frac{\chi(p)}{p}p^{i\vartheta}}=\sum\limits_{k=0}^{r-1}{\sum\limits_{\substack{p\leqslant x \\ \chi(p)=\zeta^k}}{g(p)\frac{\zeta^k}{p}p^{i\vartheta}}}{\rm .}$$
La suite de la démonstration est identique à celle du lemme $2.5$ de \cite{B}, nous introduisons un paramètre $w$, choisi convenablement par la suite. La fonction~$t\mapsto\e^{it}$ est périodique de période nulle, donc le lemme \cRM{3}.4.13 de \cite{T} permet d'écrire
$$\sum\limits_{\substack{w<p\leqslant x \\ \chi(p)=\zeta^k}}{\frac{g(p)}{p}p^{i\vartheta}}=O\Big(\frac{1}{|\vartheta| \log w}+\frac{1+|\vartheta|}{\e^{c(\log w)^{\eta}}}\Big){\rm .}$$
\noindent
Par ailleurs, pour $p\leqslant w$, nous approchons $p^{i\vartheta}$ par $1$, l'erreur que nous commettons étant $O(|\vartheta| \log w)$. Nous renvoyons à la démonstration du Lemme \ref{lemme 9} pour savoir comment choisir $w$ afin que les différents termes d'erreur soient~$O(1)$ lorsque $|\vartheta|\leqslant 1$ et $O\big(\log_2(2+|\vartheta|)\big)$ lorsque $|\vartheta|>1$. Nous concluons en remarquant que 
$\sum\limits_{k=0}^{r-1}{\zeta^k}=0$. 
\end{proof}

\begin{lemme}
\label{lemme 11}
Soit $\boldsymbol{\chi}$ un couple de deux caractères de Dirichlet non principaux tels que $\chi_1\overline{\chi_2}$ ne soit pas principal. Soient $A$, $c$ des constantes strictement positives, $\eta \in ]0,1[$ et $g$ appartenant à $\mathcal{M}_A(\boldsymbol{\chi},c,\eta)$. Pour $x\geqslant 16$ et uniformément pour $(\vartheta_1,\vartheta_2) \in \mathbb{R}^2$, nous avons
\begin{align*}
\sum\limits_{p\leqslant x} &{g(p)\frac{|1+\chi_1(p)p^{i\vartheta_1}+\chi_2(p)p^{i\vartheta_2}|^2}{p}}\\ &\leqslant 3y\log_2(x)\\
&\;\;\;+B\big(\log_2(2+|\vartheta_1|)1_{]1,+\infty[}(|\vartheta_1|)+\log_2(2+|\vartheta_2|)1_{]1,+\infty[}(|\vartheta_2|)\big) +O(1) {\rm ,}
\end{align*}
où $y=y(g)$.
\end{lemme}

\begin{proof}
Un développement du carré du module fournit
\begin{align*}
|1+\chi_1(p)p^{i\vartheta_1}+\chi_2(p)p^{i\vartheta_2}|^2= & 3+\chi_1(p)p^{i\vartheta_1}+\overline{\chi_1(p)}p^{-i\vartheta_1} \\
& +\chi_2(p)p^{i\vartheta_2}+\overline{\chi_2(p)}p^{-i\vartheta_2}\\
&+\chi_1(p)\overline{\chi_2(p)}p^{i(\vartheta_1-\vartheta_2)}+\chi_2(p)\overline{\chi_1(p)}p^{i(\vartheta_2-\vartheta_1)} {\rm .}
\end{align*}
\noindent
Nous étudions alors les sommes sur $p$ de chacun de ces termes séparément, en utilisant le Lemme \ref{lemme 10} pour obtenir le résultat énoncé.
\end{proof}

\subsection{Lemmes fondamentaux}
Le terme $\kappa(r)$ défini en \eqref{eq kappa} peut facilement être majoré par $3$, ce majorant est suffisant pour la suite, néanmoins, nous pouvons l'améliorer, au moyen de la proposition suivante.
\begin{lemme}
\label{prop}
Pour tout entier $r \geqslant 2$, nous avons 
$$\kappa(r)\leqslant \frac{5}{2} {\rm ,}$$
\noindent
où $\kappa(r)$ est défini en \eqref{eq kappa}. Cette inégalité est une égalité lorsque $r=2$.
\end{lemme}

\begin{proof}
Nous notons $A_r=\{z \in \mathbb{U}_r \;:\; |1+z|<1\}$ où $\mathbb{U}_r$ désigne l'ensemble des racines r-iémes de l'unité. Ainsi
\begin{align*}
\kappa(r)&=\frac{1}{r}\bigg(\sum\limits_{z \notin A}{|1+z|^2}+\sum\limits_{z \in A}{1}\bigg)\\
&=\frac{1}{r}\bigg(\sum\limits_{z \in \mathbb{U}_r}{|1+z|^2}+\sum\limits_{z \in A}{(1-|1+z|^2)}\bigg)\\
&=2+\frac{1}{r}\sum\limits_{z \in A}{\big(1-|1+z|^2\big)}{\rm .}
\end{align*}
Si $A_r$ est non vide, nous pouvons choisir $z \in A_r$, alors $\rm{arg}(z) \in ]2\pi/3,4\pi/3[$, ainsi, pour tout $z' \in A_r$, $\rm{arg}(zz')\in ]4\pi/3,8\pi/3[$, donc $zz' \notin A_r$, nous avons donc une injection de $A_r$ dans $\mathbb{U}_r\smallsetminus A_r$, ce qui montre que $|A_r|\leqslant \frac{r}{2}$. Ainsi
$$\kappa(r)\leqslant \frac{5}{2}{\rm .}$$
\end{proof}
Par la suite, pour $T>1$, pour tout $n\geqslant 1$, nous notons
\begin{equation}
\label{eq anbn}
a_n:=\prod\limits_{\substack{p\mid n \\ p\leqslant T}}{p} \;\;\;\;\;\;\;\;\;\;\;\;\;\;\;\;\;\;\;\;\;\;\;\;  b_n:=\prod\limits_{\substack{p\mid n \\ p> T}}{p} {\rm .}
\end{equation}

\begin{lemme}
\label{lemme 12}
Soient $\chi$ un caractère de Dirichlet non principal , $T>1$ un réel, $A$, $c$ des constantes strictement positives, $\eta \in ]0,1[$ et $g \in \mathcal{M}_A(\chi, c,\eta)$. Nous avons, uniformément pour $0<\sigma\leqslant \frac{1}{\log T}$ 
$$\int_{\mathbb{R}}{\sum\limits_{n\geqslant 1}{g(n)\frac{\mu^2(n)}{a_nb_n^{1+\sigma}}\max\limits_{d\mid n}\big(|\tau(d,\chi,\vartheta)|^2\big)}\frac{{\rm d}\vartheta}{1+\vartheta^2}} \ll\frac{1}{\sigma^{m(y,\rho)}} {\rm ,}$$
\noindent 
où $\rho$ est défini en \eqref{eq lambda} et $m(y,\rho)$ en \eqref{eq m}.
\end{lemme}

\begin{proof}
Nous ne traitons que l'intégrale sur $\mathbb{R}^+$ car celle sur $\mathbb{R}^-$ se déduit par symétrie. Notons $S(\vartheta)$ la somme à l'intérieur de l'intégrale. Elle admet un développement en produit eulérien
$$S(\vartheta)=\prod\limits_{p\leqslant T}{\Big(1+g(p)\frac{\max(1,|1+\chi(p)p^{i\vartheta}|^2)}{p}\Big)} \prod\limits_{p> T}{\Big(1+g(p)\frac{\max(1,|1+\chi(p)p^{i\vartheta}|^2)}{p^{1+\sigma}}\Big)} {\rm .}$$
\noindent
Le Lemme \ref{lemme 9} nous permet alors de majorer uniformément $S(\vartheta)$ en fonction de~$\vartheta$. En prenant $x=\e^{1/\sigma}$, ce qui est possible par hypothèse sur $\sigma$, nous obtenons l'inégalité suivante, pour $0\leqslant \vartheta<1$
$$S(\vartheta)\ll \Big(\frac{1}{\sigma+\vartheta}\Big)^{y\kappa(r)}\Big(1+\frac{\vartheta}{\sigma}\Big)^{(\rho+2)y}{\rm ,}$$
\noindent
et pour $\vartheta>1$
$$S(\vartheta)\ll \frac{1}{\sigma^{(\rho+2)y}}\log(2+\vartheta)^B {\rm .}$$
\noindent
Nous découpons l'intégrale au point $\vartheta=1$. Celle portant sur l'ensemble $\{\vartheta>1\}$ est convenablement majorée. Pour l'autre intégrale, nous avons
\begin{align*}
\int_{0}^{1}{S(\vartheta){\rm d}\vartheta} & \ll \int_{0}^{1}{\Big(\frac{1}{\sigma+\vartheta}\Big)^{\kappa(r)y}\Big(1+\frac{\vartheta}{\sigma}\Big)^{(\rho+2)y}{\rm d}\vartheta} \\
& \ll \frac{1}{\sigma^{\kappa(r)y}}\int_{0}^{1}{\Big(\frac{1}{1+\frac{\vartheta}{\sigma}}\Big)^{\kappa(r)y-(\rho+2)y}{\rm d}\vartheta}{\rm .}
\end{align*}
D'après la Proposition \ref{prop}, nous avons $\kappa(r)<3$. Un calcul direct de l'intégrale permet alors d'obtenir le majorant souhaité, en notant que, uniformément pour $\sigma \in ]0,1[$
$$\frac{1}{\sigma^{\kappa(r)y-1}}\log(1/\sigma)\ll \frac{1}{\sigma^{3y-1}}{\rm .}$$
\end{proof}

\begin{lemme}
\label{lemme 13}
Soient $\boldsymbol{\chi}$ un couple de deux caractères de Dirichlet non principaux tels que $\chi_1 \overline{ \chi_2}$ soit non principal, $A$, $c$ des constantes strictement positives et $\eta \in ]0,1[$ , $g \in \mathcal{M}_A(\boldsymbol{\chi},c,\eta)$ et $T>1$ un réel. Nous avons, uniformément pour $\sigma>0$ et $\vartheta \in \mathbb{R}$
$$\sum\limits_{n\geqslant 1}{\frac{\mu^2(n)g(n)}{a_nb_n^{1+\sigma}}\widehat{\tau_{1}}(n,\boldsymbol{\chi},\vartheta)} \ll 
\left\{
 \begin{array}{ll}
\frac{1}{\sigma^{3y}}  & \mbox{si }   |\vartheta|\leqslant 1 {\rm ,}\\
\frac{1}{\sigma^{3y}} \log(2+|\vartheta|)^B & \mbox{sinon,} \\
\end{array}
\right.$$
où $a_n$ et $b_n$ sont définis en \eqref{eq anbn} et $\widehat{\tau_{1}}(n,\boldsymbol{\chi},\vartheta)$ en \eqref{hattau1}.
\end{lemme}

\begin{proof}
La seule différence par rapport à la démonstration du Lemme~\ref{lemme 12} est l'utilisation du Lemme \ref{lemme 11} pour majorer le produit eulérien.
\end{proof}

\section{Estimation de $C(n,\boldsymbol{\chi})$ et $D(n,\boldsymbol{\chi})$}
Pour $0<\sigma\leqslant 1/10$, nous notons
\begin{equation}
\label{eq 1}
\mathcal{L}_1(\sigma):=\exp\Big(\sqrt{\log(1/\sigma)\log_2(1/\sigma)}\Big) {\rm .}
\end{equation}
Pour $\sigma> 0$, $g$ une fonction arithmétique et $T>1$, nous notons
$$\mathfrak{S}_1(\sigma,g,\boldsymbol{\chi}):=\sum\limits_{n\geqslant 1}{\frac{\mu^2(n)}{a_nb_n^{1+\sigma}}g(n)C(n,\boldsymbol{\chi})}{\rm ,}$$
\noindent
où $C(n,\boldsymbol{\chi})$ est défini en \eqref{eq C} et $a_n$ et $b_n$ sont définis en \eqref{eq anbn}. 
L'objectif de cette partie est de démontrer le théorème suivant.
\begin{theoreme}
\label{theo 3}
Soit $\boldsymbol{\chi}=(\chi_1, \chi_2)$ un couple de deux caractères de Dirichlet non principaux tels que $\chi_1\overline{\chi_2}$ soit non principal. Soient $A$, $c$ des constantes strictement positives, $\eta \in ]0,1[$  et $g \in \mathcal{M}_A(\boldsymbol{\chi},c,\eta)$. Si $0< y=y(g)$,  il existe une constante $\alpha>0$, dépendant au plus de $g$, $\boldsymbol{\chi}$, $c$ et $\eta$, telle que nous ayons, uniformément pour $\sigma>0$
$$\mathfrak{S}_1(\sigma,g,\boldsymbol{\chi})\ll\frac{1}{\sigma^{\max\{y+1,m(y,\rho)\}}}\mathcal{L}_1(\sigma)^{\alpha}{\rm ,}$$
où $m(y,\rho)$ est défini en \eqref{eq m}, et où nous avons posé
\begin{equation}
\label{eq T}
\log T:=\Big(\frac{9yq\log \frac{1}{\sigma}}{c}\Big)^{1/\eta}
\end{equation}
et 
\begin{equation}
\label{eq q}
q:=\sqrt{\frac{\log (1/\sigma)}{\log_2(1/\sigma)}}{\rm .}
\end{equation}
Nous avons le même résultat en remplaçant $C(n,\boldsymbol{\chi})$ par $D(n,\boldsymbol{\chi})$.
\end{theoreme}
\subsection{Estimation de $\Delta^{(k)}(n,\boldsymbol{\chi},\vartheta)$}
Pour $\sigma> 0$, $g$ une fonction arithmétique et $\vartheta \in \mathbb{R}$, nous notons
$$\mathfrak{S}^{(k)}(\sigma,g,\boldsymbol{\chi},\vartheta):=\sum\limits_{n\geqslant 1}{\frac{\mu^2(n)g(n)}{a_nb_n^{1+\sigma}}\Delta^{(k)}(n,\boldsymbol{\chi},\vartheta)^2}{\rm ,}$$
où $\Delta^{(k)}(n,\boldsymbol{\chi},\vartheta)$ est défini en \eqref{Dktheta} et  $a_n$ et $b_n$ sont définis en \eqref{eq anbn} à partir de la valeur de $T$ déterminée en \eqref{eq T}.
\begin{prop}
\label{prop 1}
Soient $\boldsymbol{\chi}=(\chi_1, \chi_2)$ un couple de deux caractères de Dirichlet non principaux tels que $\chi_1\overline{\chi_2}$ soit non principal. Soient $A$, $c$ des constantes strictement positives, $\eta \in ]0,1[$ et $g \in \mathcal{M}_A(\boldsymbol{\chi},c,\eta)$. Pour $0<y=y(g)$, il existe une constante $\alpha>0$, dépendant au plus de $g$, $\boldsymbol{\chi}$, $c$ et $\eta$, telle que nous ayons, uniformément pour $\sigma>0$, $k\leqslant \sqrt{\log 1/\sigma}$ entier et $|\vartheta| \leqslant q\sigma$ 
$$\mathfrak{S}^{(k)}(\sigma,g,\boldsymbol{\chi},\vartheta)\ll\frac{(k+1)^3}{\sigma^{3y}}\mathcal{L}_1(\sigma)^{\alpha}{\rm ,}$$
uniformément pour $\sigma>0$, $k\leqslant \sqrt{\log 1/\sigma}$ entier et $q\sigma\leqslant |\vartheta| \leqslant \frac{q}{\log T}$ 
$$\mathfrak{S}^{(k)}(\sigma,g,\boldsymbol{\chi},\vartheta)\ll\frac{(k+1)^3|\vartheta|^{m(y,\rho)-3y}}{\sigma^{m(y,\rho)}}\mathcal{L}_1(\sigma)^{\alpha}{\rm ,}$$
uniformément pour $\sigma>0$, $k\leqslant \sqrt{\log 1/\sigma}$ entier et $\frac{q}{\log T}\leqslant |\vartheta| \leqslant \e^{c(\log T)^{\eta}}$ 
$$\mathfrak{S}^{(k)}(\sigma,g,\boldsymbol{\chi},\vartheta)\ll\frac{(k+1)^3}{\sigma^{m(y,\rho)}}\mathcal{L}_1(\sigma)^{\alpha}\log(3+|\vartheta|)^B$$
et uniformément pour $\sigma>0$, $k\leqslant \sqrt{\log 1/\sigma}$ entier et $|\vartheta| \geqslant \e^{c(\log T)^{\eta}}$
$$\mathfrak{S}^{(k)}(\sigma,g,\boldsymbol{\chi},\vartheta)\ll\frac{(k+1)^3}{\sigma^{4y}}\mathcal{L}_1(\sigma)^{\alpha}\log(2+|\vartheta|)^B {\rm ,}$$
où $\rho$ est défini en \eqref{eq lambda}, $\mathcal{L}_1(\sigma)$ est défini en \eqref{eq 1}, et $T$ et $q$ sont définis en \eqref{eq T} et \eqref{eq q}.
\end{prop}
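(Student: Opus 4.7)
The plan is to adapt the differential method of La Bretèche and Tenenbaum from \cite{B} to this multi-character setting. The starting point is Lemma \ref{lemme 3}, applied with the parameter $q$ of \eqref{eq q}, which gives
$$\Delta^{(k)}(n,\boldsymbol{\chi},\vartheta)^2 \ll E_k(n)^{-3/q}\sum_{j\leqslant k} M_{2q}^{(j)}(n,\boldsymbol{\chi},\vartheta)^{1/q}+\max_{d\mid n}\bigl|\tau(d,\chi_2,\vartheta)\bigr|^2.$$
Inserted into $\mathfrak{S}^{(k)}(\sigma,g,\boldsymbol{\chi},\vartheta)$, the error term $\max_{d\mid n}|\tau(d,\chi_2,\vartheta)|^2$ is bounded regime-by-regime via Lemma \ref{lemme 12} (whose Euler-product input is furnished by Lemma \ref{lemme 9}), with the cruder $|\tau(d,\chi_2,\vartheta)|\leqslant \tau(d)$ sufficing for small $|\vartheta|$. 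The singular factor $E_k(n)^{-3/q}$ is tamed by Lemma \ref{lemme 4}, which removes the $n$ with $E(n)\leqslant \sigma^Y$, at the cost of absorbing only a $\mathcal{L}_1(\sigma)^{O(1)}$ amount.

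Having reduced the problem to bounding the auxiliary sums
$$S_{2q,j}(\sigma):=\sum_{n\geqslant 1}\frac{\mu^2(n)g(n)}{a_nb_n^{1+\sigma}}M_{2q}^{(j)}(n,\boldsymbol{\chi},\vartheta)\qquad(j\leqslant k),$$
I differentiate in $\sigma$; this produces the factor $\log b_n=\sum_{p\mid n,\,p>T}\log p$. Distinguishing a marked prime divisor $p$ of $n$ and exploiting the convolution structure of $\Delta^{(k)}(mp,\boldsymbol{\chi},\vartheta,\cdot)$ over the two choices $d\in\{d',pd'\}$ with $d'\mid m$ expands the derivative as a binomial combination of prime sums
$$\sum_{p>T}\frac{g(p)\log p}{p^{1+\sigma}}\,N^{(k)}_{i,q}(m,\boldsymbol{\chi},\vartheta,\log p),\qquad 1\leqslant i\leqslant q-1.$$
Lemma \ref{lemme 7} controls each of these, and a Hölder inequality on the outer sum over $m$ produces a Gronwall-type differential inequality in $\sigma$ whose driving term is the $\widehat{\tau_1}$-sum controlled by Lemma \ref{lemme 13}. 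Integrating this separable ODE yields $S_{2q,j}(\sigma)\ll \sigma^{-qA(\vartheta)}\mathcal{L}_1(\sigma)^{\alpha q}$ for an exponent $A(\vartheta)$ whose value depends on the regime.

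The constant $A(\vartheta)$ is pinned down by Lemma \ref{lemme 8}: it equals $3y$ when $|\vartheta|\leqslant q\sigma$, it equals $(\rho+2)y$ throughout $q\sigma\leqslant|\vartheta|\leqslant\exp\bigl(c(\log T)^\eta\bigr)$, and it equals $4y$ beyond. Combined with the $\widehat{\tau_1}$-bound of Lemma \ref{lemme 13} and a $1/q$-th root extraction, this produces the four announced exponents $3y$, $m(y,\rho)$, $m(y,\rho)$, $4y$. The main obstacle is the intermediate range $q\sigma\leqslant|\vartheta|\leqslant q/\log T$, in which the extra factor $|\vartheta|^{m(y,\rho)-3y}$ must appear: one has to interpolate between the two Euler-product regimes of Lemma \ref{lemme 9} at the matching parameter $w=\exp\bigl(\log x/(1+|\vartheta|\log x)\bigr)$, ensuring that the differential inequality's driving constant transitions cleanly from $3y$ to $(\rho+2)y$ as $|\vartheta|$ crosses $q\sigma$. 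Once that interpolation is carried out, the choice $q=\sqrt{\log(1/\sigma)/\log_2(1/\sigma)}$ from \eqref{eq q} exactly balances the $\sigma^{-O(1/q)}$ losses from each Hölder step against the main power of $\sigma^{-1}$, collapsing all remainders into the announced $\mathcal{L}_1(\sigma)^\alpha$ factor.
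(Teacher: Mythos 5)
Your overall architecture does match the paper's: Lemma \ref{lemme 3} to pass to the moments $M^{(j)}_{2q}$, Lemma \ref{lemme 12} for the term $\max_{d\mid n}|\tau(d,\chi_2,\vartheta)|^2$, Lemma \ref{lemme 4} to neutralise $E_k(n)^{-3/q}$, then a differential inequality fed by Lemmas \ref{lemme 7}, \ref{lemme 8} and \ref{lemme 13} and integrated by Gronwall comparison, with the four regimes dictated by Lemma \ref{lemme 8}. But the one step on which the whole method hinges is stated in a form that fails. You propose to bound
$$S_{2q,j}(\sigma)=\sum_{n\geqslant 1}\frac{\mu^2(n)g(n)}{a_nb_n^{1+\sigma}}M^{(j)}_{2q}(n,\boldsymbol{\chi},\vartheta)\ll \sigma^{-qA(\vartheta)}\mathcal{L}_1(\sigma)^{\alpha q}$$
and to finish by ``a $1/q$-th root extraction''. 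Neither half of this works. The recursion underlying the differential inequality is
$$M^{(k)}_{2q}(np,\boldsymbol{\chi},\vartheta)\leqslant \big(2|1+\chi_1(p)p^{i\vartheta}|^{2q}+3\big)M^{(k)}_{2q}(n,\boldsymbol{\chi},\vartheta)+W^{(k)}_{q}(n,p,\vartheta){\rm ,}$$
and the average over $p$ of $|1+\chi_1(p)p^{i\vartheta}|^{2q}$ is of order $\binom{2q}{q}\asymp 4^q/\sqrt{q}$, not $O(q)$; the Gronwall bound for the un-rooted sum is therefore of shape $\sigma^{-cy4^q/\sqrt{q}}$, whose $q$-th root is useless, so the intermediate claim $S_{2q,j}(\sigma)\ll\sigma^{-qA(\vartheta)}$ with $A(\vartheta)=3y$ is simply false. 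Moreover, even granting it, what is needed is $\sum_n w_n M^{(j)}_{2q}(n)^{1/q}$ and not $\big(\sum_n w_n M^{(j)}_{2q}(n)\big)^{1/q}$; passing from the latter to the former by H\"older costs a factor $\big(\sum_n w_n\big)^{1-1/q}\asymp\sigma^{-y(1-1/q)}$, which already degrades the exponent $3y$ to essentially $4y$ in the first regime.

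The resolution, which is the point of the differential method of \cite{B}, is to place the exponent $1/q$ \emph{inside} the sum from the outset and to run the argument on $L^{(k)}_{\vartheta,T,q}(s)=\sum_n\mu^2(n)g(n)a_n^{-1}b_n^{-1-s}M^{(k)}_{2q}(n,\boldsymbol{\chi},\vartheta)^{1/q}$. Then the linear coefficient of the differential inequality is $\big(2|1+\chi_1(p)p^{i\vartheta}|^{2q}+3\big)^{1/q}\leqslant 5^{1/q}\max\big(1,|1+\chi_1(p)p^{i\vartheta}|^2\big)$, whose prime average is exactly the $3y+O(1/q)$ or $(\rho+2)y+O(1/q)$ of Lemma \ref{lemme 8}; this is the only mechanism producing the exponents $3y$, $m(y,\rho)$ and $4y$. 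A secondary imprecision: the factor $|\vartheta|^{m(y,\rho)-3y}$ in the range $q\sigma\leqslant|\vartheta|\leqslant q/\log T$ does not come from an interpolation of the Euler product of Lemma \ref{lemme 9}; for fixed $\vartheta$ one integrates the differential inequality in two stages, with coefficient $3y+O(1/q)$ for $|\vartheta|/q\leqslant s\leqslant 1/\log T$ and $(\rho+2)y+O(1/q)$ for $\sigma\leqslant s\leqslant|\vartheta|/q$, and the matching of the two comparison functions at $s=|\vartheta|/q$ (together with the forcing term of Lemma \ref{lemme 13}, which imposes the $\max$ with $3y-1$ in $m(y,\rho)$) is what produces that factor.
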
 

\begin{proof}
Cette démonstration s'appuie sur la méthode différentielle de \cite{B}.\\

Nous commençons par utiliser le Lemme \ref{lemme 3} pour majorer $\Delta^{(k)}(n,\boldsymbol{\chi},\vartheta)$. Le membre de droite du majorant se traite au moyen du Lemme \ref{lemme 12}, qui fournit la majoration 
$$\sum\limits_{n\geqslant 1}{\frac{\mu^2(n)}{a_nb_n^{1+\sigma}}g(n)\max\limits_{d\mid n}|\tau(d,\chi_1,\vartheta)|^2}\ll \frac{1}{\sigma^{m(y,\rho)}}{\rm .}$$
Pour traiter le membre de gauche du majorant, nous utilisons dans un premier temps le Lemme~\ref{lemme 4}, qui nous permet de restreindre la somme aux entiers $n$ pour lesquels le facteur $E(n)$, défini en \eqref{E} n'est pas trop petit. Pour de tels entiers $n$, le terme $E(n)^{-4/q}$, où $q$ est défini en \eqref{eq q}, est uniformément majoré par $\mathcal{L}_1(\sigma)^{\alpha}$ pour une constante $\alpha$ bien choisie. Il s'agit donc étudier 
$$L^{(k)}_{\vartheta,T,q}(\sigma):= \sum\limits_{n\geqslant 1}{\frac{\mu^2(n)}{a_nb_n^{1+\sigma}}g(n)M^{(k)}_{2q}(n,\boldsymbol{\chi},\vartheta)^{1/q}} {\rm ,}$$
où $M^{(k)}_{2q}(n,\boldsymbol{\chi},\vartheta)$ est défini en en \eqref{eq Mijtheta}. En dérivant la fonction $L^{(k)}_{\vartheta,T,q}(\sigma)$, nous obtenons
\begin{align*}
-\big(L^{(k)}_{\vartheta,T,q}\big)'(s)&=\sum\limits_{n\geqslant 1}{\frac{\mu^2(n)}{a_nb_n^{1+s}}g(n)M^{(k)}_{2q}(n,\boldsymbol{\chi},\vartheta)^{1/q}\log b_n}\\
&=\sum\limits_{n\geqslant 1}{\frac{\mu^2(n)}{a_nb_n^{1+s}}g(n)M^{(k)}_{2q}(n,\boldsymbol{\chi},\vartheta)^{1/q}\sum\limits_{\substack{p\mid n \\ p>T}}{\log p}}\\
&= \sum\limits_{n\geqslant 1}{\frac{\mu^2(n)}{a_nb_n^{1+s}}g(n)\sum\limits_{\substack{(n,p)=1\\p>T}}{M^{(k)}_{2q}(np,\boldsymbol{\chi},\vartheta)^{1/q}\frac{g(p)\log p}{p^{1+s}}}}{\rm .}
\end{align*}
Un calcul explicite de la quantité $\Delta^{(k)}(np,\boldsymbol{\chi},\vartheta,u,v_1,v_2)$, valable pour tout $p$ premier et $n \in \mathbb{N}$ tel que $(n,p)=1$, fournit 
$$\Delta^{(k)}(np,\boldsymbol{\chi},\vartheta,u,v_1,v_2)=\big(1+\chi_1(p)p^{i\vartheta}\big)\Delta^{(k)}(n,\boldsymbol{\chi},\vartheta,u,v_1,v_2)+\chi_2(p)\Delta^{(k)}(n,\boldsymbol{\chi},\vartheta,u-\log p,v_1,v_2){\rm .}$$
En prenant le module à la puissance $2q$ de cette égalité, et en utilisant des inégalités classiques, nous obtenons
\begin{align*}
&\big\lvert\Delta^{(k)}(np,\boldsymbol{\chi},\vartheta,u,v_1,v_2)\big\rvert\\
\leqslant &2\big\lvert1+\chi_1(p)p^{i\vartheta}\big\rvert^{2q}\big\lvert\Delta^{(k)}(n,\boldsymbol{\chi},\vartheta,u,v_1,v_2)\big\rvert^{2q}\\
&+3\big\lvert\Delta^{(k)}(n,\boldsymbol{\chi},\vartheta,u-\log p,v_1,v_2)\big\rvert^{2q}\\
&+(1+q)4^q\sum\limits_{j=1}^{q-1}{\binom{2q}{2j}\big\lvert\Delta^{(k)}(n,\boldsymbol{\chi},\vartheta,u,v_1,v_2)\big\rvert^{2j}\big\lvert\Delta^{(k)}(n,\boldsymbol{\chi},\vartheta,u-\log p,v_1,v_2)\big\rvert^{2(q-j)}}{\rm .}
\end{align*}
En intégrant cette inégalité sur $\mathbb{R}\times [0,1]^2$, nous obtenons
$$M^{(k)}_{2q}(np,\chi,\vartheta)\leqslant \Big(2|1+\chi_1(p)p^{i\vartheta}|^{2q}+3\Big)M^{(k)}_{2q}(n,\chi,\vartheta)+W^{(k)}_{q}(n,p,\vartheta){\rm ,}$$
\noindent
où
$$W^{(k)}_{q}(n,p,\vartheta):=(1+q)4^q\sum\limits_{j=1}^{q-1}{\binom{2q}{2j}N^{(k)}_{j,q}(n,\boldsymbol{\chi},\vartheta ,\log p)} {\rm ,}$$
les $N^{(k)}_{j,q}(n,\boldsymbol{\chi},\vartheta ,\log p)$ étant définis en \eqref{Nijkq}. Des inégalités de Hölder ainsi que le Lemme \ref{lemme 7} fournissent
\begin{align*}
-\big(L^{(k)}_{\vartheta,T,q}\big)'(s)\leqslant &\frac{D(s)}{s}L^{(k)}_{\vartheta,T,q}(s)\\
&+\frac{A}{s^{1-1/q}}\Big(L^{(k)}_{\vartheta,T,q}(s)\Big)^{(q-2)/(q-1)}\Big((k+1)^2\sum\limits_{n\geqslant 1}{\frac{\mu^2(n)}{a_nb_n^{1+\sigma}}\widehat{\tau_{1}}(n,\boldsymbol{\chi},\vartheta)}\Big)^{1/(q-1)}\\
&+B_1\frac{(k+1)^{1/q}(\log T)^{9y}}{s^{9y+1-1/q}\e^{-c/q(\log T)^{\eta}}}
\end{align*}
où
$$D(s):=s\sum\limits_{p>T}{\frac{g(p)}{p^{1+s}}\big(\max(1,|1+\chi_1(p)p^{i\vartheta}|^2)+O(1/q)\big)\log p}{\rm ,}$$
$\widehat{\tau_{1}}(n,\boldsymbol{\chi},\vartheta)$ est défini en \eqref{hattau1} et $A$ et $B_1$ sont des constantes absolues.

Si $|\vartheta|\leqslant q\sigma$, le Lemme \ref{lemme 8} nous permet d'obtenir $D(s)\leqslant 3y+O(1/q)$ pour tout $s\geqslant \sigma$. De plus, par le Lemme~\ref{lemme 13}, nous avons
\begin{align*}
-\big(L^{(k)}_{\vartheta,T,q}\big)'(s)\leqslant &\frac{3y+a_1/q}{s}L^{(k)}_{\vartheta,T,q}(s)+A\frac{(k+1)^{2/(q-1)}\Big(L^{(k)}_{\vartheta,T,q}(s)\Big)^{(q-2)/(q-1)}}{s^{1+3y/(q-1)-1/q}}\\
&+B_1\frac{(\log T)^9}{s^{9y+1-1/q}\e^{-c(\log T)^{\eta}}}{\rm ,}
\end{align*}
où $a_1>0$ est une constante absolue.
\noindent
En posant 
$$\varepsilon:=B_1(\log T)^{9y}e^{-(\log T)^{\eta}c/q}, \; \; \beta:=9y-1/q {\rm ,}$$
\noindent 
nous obtenons
$$-\big(L^{(k)}_{\vartheta,T,q}\big)'(s)\leqslant \phi_1\big(s,L^{(k)}_{\vartheta,T,q}(s)\big){\rm ,}$$
\noindent
avec
$$\phi_1(s,x):=\frac{3y+a_1/q}{s}x+\frac{A(k+1)^{2/(q-1)}x^{(q-2)/(q-1)}}{s^{1-1/q+3y/(q-1)}}+\frac{\varepsilon}{s^{\beta+1}} {\rm .}$$
\noindent
Posons 
$$\gamma_1:=3y+\frac{b_1}{q}, \; \; X_1(s):=\frac{K_1}{s^{\gamma_1}}+\frac{\varepsilon}{s^{\beta}}$$
\noindent
où $b_1>a_1$ est une constante. Remarquons dans ce cas l'inégalité suivante
$$\gamma_1+1\geqslant1-\frac{1}{q}+\frac{3y}{q-1}+\frac{q-2}{q-1}\gamma_1 {\rm .}$$
\noindent
En supposant les inégalités
\begin{align*}
&\gamma_1>(3y+\frac{a_1}{q})+A(k+1)^{2/(q-1)}K_1^{-1/(q-1)}{\rm ,} \\
&\beta>3y+\frac{a_1}{q}+A(k+1)^{2/(q-1)}K_1^{-1/(q-1)}+1{\rm ,}
\end{align*}
\noindent
nous avons
\begin{align*}
-X_1'(s)=&\frac{K_1\gamma_1}{s^{\gamma_1+1}}+\frac{\epsilon\beta}{s^{\beta+1}} \\
\geqslant& \frac{K_1(3y+a_1/q)}{s^{\gamma_1+1}}+\frac{\varepsilon(3y+a_1/q)}{s^{\beta+1}}+\frac{A(k+1)^{2/(q-1)}K_1^{(q-2)/(q-1)}}{s^{\gamma_1+1}}\Big(1+\frac{\varepsilon s^{\gamma_1}}{K_1s^{\beta}}\Big)\\
&+\frac{\varepsilon}{s^{\beta+1}} \\
\geqslant& \frac{K_1(3y+a_1/q)}{s^{\gamma+1}}+\frac{\varepsilon(3y+a_1/q)}{s^{\beta+1}}\\
&+\frac{A(k+1)^{2/(q-1)}K_1^{(q-2)/(q-1)}}{s^{\gamma_1+1}}\Big(1+\frac{\varepsilon s^{\gamma_1}}{K_1s^{\beta}}\Big)^{(q-2)/(q-1)}+\frac{\varepsilon}{s^{\beta+1}}\\
\geqslant &\phi_1\big(s,X_1(s)\big){\rm .}
\end{align*}
\begin{remarque}
Les hypothèses sur $\gamma_1$ et $\beta$ sont vérifiées  dès que nous choisissons $K$ de l'ordre de $(k+1)^{2}(qC_2)^q$ pour une constante $C_2$ assez grande.
\end{remarque}
Comme tout ceci n'est valable que pour $s\leqslant \frac{1}{\log T}$, nous notons $s_0=\frac{1}{\log T}$. Remarquons dans ce cas que 
$$L^{(k)}_{\vartheta,T,q}(s)\leqslant \sum\limits_{n\geqslant 1}{\frac{\mu^2(n)}{a_nb_n^{1+s}}\tau_3(n)^{2}} \ll \frac{(\log T)^{9y}}{s^{9y}} {\rm .} $$
\noindent 
Ainsi,
$$L^{(k)}_{\vartheta,T,q}(s_0)\leqslant X_1(s_0)$$
\noindent
en choisissant $K=(k+1)^{2}(qC_2)^q(\log T)^{18y}$. Le lemme 70.2 de \cite{HT}  permet alors de montrer l'inégalité suivante
$$L^{(k)}_{\vartheta,T,q}(s)\leqslant \frac{(k+1)^{2}(qC_2)^q(\log T)^{18y}}{s^{3y+a_1/q}}+\frac{B_1(\log T)^{9y}}{\e^{(\log T)^{\eta}c/q}s^{9y-1/q}} $$
dès que $\sigma \leqslant s\leqslant s_0$. Les choix de $T$ \eqref{eq T} et de $q$ \eqref{eq q} impliquent alors
$$L^{(k)}_{\vartheta,T,q}(\sigma)\ll \frac{(k+1)^{2}}{\sigma^{3y}}\mathcal{L}_1(\sigma)^{\alpha} {\rm .}$$

Le cas $q\sigma\leqslant |\vartheta|\leqslant \frac{q}{\log T}$ est le plus délicat, puisque le majorant de $D(s)$ fourni par le Lemme \ref{lemme 8} change en fonction de la position de $s$ par rapport à $\frac{|\vartheta|}{q}$.\\

Pour $\frac{|\vartheta|}{q}\leqslant s \leqslant \frac{1}{\log T}$, nous avons toujours $D(s)\leqslant {3y+a_1/q}$, la suite de la démonstration est inchangée, et nous permet d'avoir
$$L^{(k)}_{\vartheta,T,q}\big(\frac{|\vartheta|}{q}\big) \ll \frac{(k+1)^{2}}{|\vartheta|^{3y}}\mathcal{L}_{1}(\sigma)^{\alpha}$$
car $|\vartheta|\geqslant \sigma$ et $\mathcal{L}_1$ est décroissante. Par ailleurs, le facteur $q^{3y}$ peut être absorbé par le terme $\mathcal{L}_1(\sigma)^{\alpha}$ quitte à augmenter $\alpha$.\\

Pour $s\leqslant \frac{|\vartheta|}{q}$, nous avons cette fois $D(s)\leqslant (\rho+2)y+a_1/q$. Nous modifions donc $\gamma_1$ et $K_1$ par rapport à précédemment, nous choisissons
$$\gamma_1:=m(y,\rho)+b_1/q{\rm ,}$$
de manière à vérifier
$$\gamma_1+1\geqslant1-\frac{1}{q}+\frac{3y}{q-1}+\frac{q-2}{q-1}\gamma_1 $$
et
$$\gamma_1> (\rho+2)y+a_1/q+A(k+1)^{2/(q-1)}K_1^{-1/(q-1)}{\rm .}$$
Par ailleurs, pour assurer l'inégalité 
$$L^{(k)}_{\vartheta,T,q}\big(\frac{|\vartheta|}{q}\big)\leqslant X_1(\frac{|\vartheta|}{q}) {\rm ,}$$
nous choisissons
$$K_1:=(k+1)^{2}|\vartheta|^{m(y,\rho)-3y}\mathcal{L}_1(\sigma)^{\alpha}{\rm .}$$
Nous obtenons ainsi
$$L^{(k)}_{\vartheta,T,q}(\sigma)\ll\frac{(k+1)^{2}|\vartheta|^{m(y,\rho)-3y}}{\sigma^{m(y,\rho)}}\mathcal{L}_1(\sigma)^{\alpha}{\rm .}$$ 

Si $\frac{q}{\log T}\leqslant |\vartheta| \leqslant \e^{c(\log T)^{\eta}}$, nous avons systématiquement $D(s)\leqslant (\rho+~2)y+~a_1/q$ et le Lemme \ref{lemme 13} fournit cette fois la majoration
$$\sum\limits_{n\geqslant 1}{\frac{\mu^2(n)g(n)}{a_nb_n^{1+s}}\widehat{\tau_1}(n,\boldsymbol{\chi},\vartheta)}\ll \frac{1}{s^{3y}}\log(3+|\vartheta|)^B{\rm .}$$ 
Nous choisissons ainsi
$$\gamma_1:=m(y,\rho)+\frac{b_1}{q}$$
 et 
 $$K_1:=(k+1)^{2}(qC_2)^q\log(3+|\vartheta|)^B$$
 pour obtenir
 $$L^{(k)}_{\vartheta,T,q}(\sigma)\ll \frac{(k+1)^{2}}{\sigma^{m(y,\rho)}}\mathcal{L}_1(\sigma)^{\alpha}{\rm .}$$
 
 Enfin, si $|\vartheta|\geqslant \e^{c(\log T)^{\eta}}$, nous avons $D_1(s)\leqslant 4y+\frac{a_1}{q}$. Nous choisissons ainsi
 $$\gamma_{1}:=4y+\frac{b_1}{q}$$
 et 
 $$K_1:=(k+1)^{2}(qC_2)^q\log(2+|\vartheta|)^B$$
 pour obtenir
 $$L^{(k)}_{\vartheta,T,q}(\sigma)\ll \frac{(k+1)^{2}}{\sigma^{4y}}\log(2+|\vartheta|)^B\mathcal{L}_1(\sigma)^{\alpha}{\rm .}$$
\end{proof} 
\subsection{Démonstration du Théorème \ref{theo 3}}
La Proposition \ref{prop 1} nous permet de déterminer la majoration du Théorème \ref{theo 3} pour $C(n,\boldsymbol{\chi})$, défini en \eqref{eq C}. En effet, d'après le théorème de Parseval, pour tout $(u_2,v_1,v_2) \in \mathbb{R}\times [0,1]$, nous avons
$$C(n,\boldsymbol{\chi},u_2,v_1,v_2)\ll \int_{\mathbb{R}}{|\Delta^{(0)}(n,\chi_2,\chi_1,\vartheta,u_2,v_2,v_1)|^2\frac{{\rm d}\vartheta}{1+\vartheta^2}}{\rm ,}$$
où $\Delta^{(0)}(n,\chi_2,\chi_1,\vartheta,u_2,v_2,v_1)$ est défini en \eqref{Dktheta}. Ainsi,
$$C(n,\boldsymbol{\chi})\ll \int_{\mathbb{R}}{|\Delta^{(0)}(n,\chi_2,\chi_1,\vartheta)|^2\frac{{\rm d}\vartheta}{1+\vartheta^2}}{\rm .}$$
Elle nous permet également de déterminer la majoration du Théorème \ref{theo 3} pour $D(n,\boldsymbol{\chi})$, défini en \eqref{eq D}. Cependant, les calculs sont plus complexes. Nous commençons par calculer, pour tout $v_1,v_2$ la transformée de Fourier de la fonction
$$(u_1,u_2)\mapsto \Delta_3(n,\boldsymbol{\chi},u_1,u_2,v_1,v_2)$$
que nous noterons
$$\widehat{\Delta}(n,\boldsymbol{\chi},\vartheta_1,\vartheta_2,v_1,v_2){\rm .}$$
Nous notons également pour la suite, pour $(v_1,v_2,\vartheta) \in \mathbb{R}^3$,
\begin{equation}
\label{k}
k(v_1,v_2,\vartheta):=\frac{\e^{i(v_1+v_2)\vartheta}-\e^{iv_1\vartheta}}{i\vartheta}{\rm .}
\end{equation}
Nous avons
\begin{align*}
\widehat{\Delta}(n,\boldsymbol{\chi},\vartheta_1,\vartheta_2,v_1,v_2)&=\int_{\mathbb{R}}{\int_{\mathbb{R}}{\Delta_3(n,\boldsymbol{\chi},u_1,u_2,v_1,v_2)\e^{-i\vartheta_1u_1}\e^{-i\vartheta_2u_2}{\rm d}u_1}{\rm d}u_2}\\
&=\sum\limits_{d_1d_2\mid n}{\chi_1(d_1)\chi_2(d_2)\int_{\log d_1-v_1}^{\log d_1}{\int_{\log d_2-v_2}^{\log d_2}{\e^{-i\vartheta_1u_1}\e^{-i\vartheta_2u_2}{\rm d}u_1}{\rm d}u_2}}\\
&=\tau(n,\boldsymbol{\chi},-\vartheta_1,-\vartheta_2)k(0,v_1,\vartheta_1)k(0,v_2,\vartheta_2){\rm ,}
\end{align*}
où $\tau(n,\boldsymbol{\chi},-\vartheta_1,-\vartheta_2)$ est défini en \eqref{tautheta12}. Ainsi, pour tout $(u_1,u_2)$ sauf un nombre fini, nous avons
\begin{align*}
&\Delta_3(n,\boldsymbol{\chi},u_1,u_2,v_1,v_2)\\
=&\frac{1}{4\pi^2}\int_{\mathbb{R}}{\int_{\mathbb{R}}{\tau(n,\boldsymbol{\chi},-\vartheta_1,-\vartheta_2)k(0,v_1,\vartheta_1)k(0,v_2,\vartheta_2)\e^{i\vartheta_1u_1}\e^{i\vartheta_2u_2}{\rm d}\vartheta_1}{\rm d}\vartheta_2}
\end{align*}
En remplaçant $u_1$ par $u-u_2$, puis en effectuant le changement de variable $\vartheta_2=\vartheta_2-\vartheta_1$, nous obtenons
\begin{align}
\label{hatdelta3}
\begin{split}
&\Delta_3(n,\boldsymbol{\chi},u-u_2,u_2,v_1,v_2)\\
=&\frac{1}{4\pi^2}\int_{\mathbb{R}}{\int_{\mathbb{R}}{\tau(n,\boldsymbol{\chi},-\vartheta_1,-\vartheta_2)k(0,v_1,\vartheta_1)k(0,v_2,\vartheta_2)\e^{i\vartheta_1(u-u_2)}\e^{i\vartheta_2u_2}{\rm d}\vartheta_1}{\rm d}\vartheta_2}\\
=&\frac{1}{4\pi^2}\int_{\mathbb{R}}{\int_{\mathbb{R}}{\tau(n,\boldsymbol{\chi},-\vartheta_1,-\vartheta_2)k(0,v_1,\vartheta_1)k(0,v_2,\vartheta_2)\e^{i\vartheta_1u}\e^{i(\vartheta_2-\vartheta_1)u_2}{\rm d}\vartheta_1}{\rm d}\vartheta_2}\\
=&\frac{1}{4\pi^2}\int_{\mathbb{R}}{\int_{\mathbb{R}}{\tau(n,\boldsymbol{\chi},-\vartheta_1,-\vartheta_2-\vartheta_1)k(0,v_1,\vartheta_1)k(0,v_2,\vartheta_1+\vartheta_2)\e^{i\vartheta_1u}\e^{i\vartheta_2u_2}{\rm d}\vartheta_1}{\rm d}\vartheta_2}{\rm .}
\end{split}
\end{align}
Posons
\begin{align}
\begin{split}
\label{F}
&F(n,\boldsymbol{\chi},u,v_1,v_2,\vartheta_2)\\
:=&\int_{\mathbb{R}}{\tau(n,\boldsymbol{\chi},-\vartheta_1,-\vartheta_2-\vartheta_1)k(0,v_1,\vartheta_1)k(0,v_2,\vartheta_1+\vartheta_2)\e^{i\vartheta_1u}{\rm d}\vartheta_1}{\rm .}
\end{split}
\end{align}
Le théorème de Parseval nous permet de relier la valeur de $D(n,\boldsymbol{\chi},u,v_1,v_2)$ au moment $L^2$  de $F(n,\boldsymbol{\chi},u,v_1,v_2,\vartheta_2)$. Nous transformons l'expression de $F$ afin de relier cette fonction à des fonctions $\Delta$ de type \eqref{Dktheta} et pouvoir utiliser la Proposition \ref{prop 1} pour estimer $D(n,\boldsymbol{\chi})$. Pour cela, nous linéarisons en $\vartheta_1$ la quantité $k(0,v_1,\vartheta_1)k(0,v_2,\vartheta_1+\vartheta_2)$. Nous commençons par utiliser la décomposition en éléments simples,
\begin{equation}
\label{DES}
\frac{1}{\vartheta_1(\vartheta_2+\vartheta_1)}=\frac{1}{\vartheta_1\vartheta_2}-\frac{1}{\vartheta_2(\vartheta_2+\vartheta_1)}{\rm .}
\end{equation}
Par ailleurs, nous avons
\begin{align}
\begin{split}
\label{e1}
&(\e^{iv_1\vartheta_1}-1)(\e^{iv_2(\vartheta_1+\vartheta_2)}-1)\\
=&(\e^{iv_1\vartheta_1}-1)(\e^{iv_2(\vartheta_1+\vartheta_2)}-\e^{iv_2\vartheta_1}+\e^{iv_2\vartheta_1}-1)\\
=&(\e^{iv_2\vartheta_2}-1)(\e^{i(v_1+v_2)\vartheta_1}-\e^{iv_2\vartheta_1})+(\e^{iv_2\vartheta_1}-1)(\e^{iv_1\vartheta_1}-1)
\end{split}
\end{align}
et
\begin{align}
\begin{split}
\label{e2}
&(\e^{iv_1\vartheta_1}-1)(\e^{iv_2(\vartheta_1+\vartheta_2)}-1)\\
=&(\e^{iv_1((\vartheta_1+\vartheta_2)-\vartheta_2)}-1)(\e^{iv_2(\vartheta_1+\vartheta_2)}-1)\\
=&(\e^{iv_1((\vartheta_1+\vartheta_2)-\vartheta_2)}-\e^{-iv_1\vartheta_2}+\e^{-iv_1\vartheta_2}-1)(\e^{iv_2(\vartheta_1+\vartheta_2)}-1)\\
=&\e^{-iv_1\vartheta_2}(\e^{iv_1(\vartheta_1+\vartheta_2)}-1)(\e^{iv_2(\vartheta_1+\vartheta_2)}-1)+(\e^{-iv_1\vartheta_2}-1)(\e^{iv_2(\vartheta_1+\vartheta_2)}-1)
\end{split}
\end{align}
Ainsi, d'après \eqref{DES}, \eqref{e1} et \eqref{e2}, nous avons
\begin{align}
\label{k1234}
\begin{split}
&k(0,v_1,\vartheta_1)k(0,v_2,\vartheta_1+\vartheta_2)\\
=&\frac{1}{i\vartheta_2}\frac{(\e^{iv_1\vartheta_1}-1)(\e^{iv_2(\vartheta_1+\vartheta_2)}-1)}{i\vartheta_1}\\
&-\frac{1}{i\vartheta_2}\frac{(\e^{iv_1\vartheta_1}-1)(\e^{iv_2(\vartheta_1+\vartheta_2)}-1)}{i(\vartheta_1+\vartheta_2)}\\
=&\frac{\e^{iv_2\vartheta_2}-1}{i\vartheta_2}\frac{(\e^{i(v_1+v_2)\vartheta_1}-\e^{iv_2\vartheta_1})}{i\vartheta_1}+\frac{1}{i\vartheta_2}\frac{(\e^{iv_1\vartheta_1}-1)(\e^{iv_2\vartheta_1}-1)}{i\vartheta_1}\\
&-\frac{\e^{-iv_1\vartheta_2}}{i\vartheta_2}\frac{(\e^{iv_1(\vartheta_1+\vartheta_2)}-1)(\e^{iv_2(\vartheta_1+\vartheta_2)}-1)}{i(\vartheta_1+\vartheta_2)}-\frac{\e^{-iv_1\vartheta_2}-1}{i\vartheta_2}\frac{\e^{iv_2(\vartheta_1+\vartheta_2)}-1}{i(\vartheta_1+\vartheta_2)}\\
=&k(0,v_2,\vartheta_2)k(v_2,v_1,\vartheta_1)+\frac{1}{i\vartheta_2}\big(k(v_1,v_2,\vartheta_1)-k(0,v_2,\vartheta_1)\big)\\
&+k(-v_1,v_1,\vartheta_2)k(0,v_2,\vartheta_1+\vartheta_2)-\frac{\e^{-iv_1\vartheta_2}}{i\vartheta_2}(k(v_1,v_2,\vartheta_1+\vartheta_2)-k(0,v_2,\vartheta_1+\vartheta_2))\\
=&k(0,v_2,\vartheta_2)k(v_2,v_1,\vartheta_1)+k(-v_1,v_1,\vartheta_2)k(v_1,v_2,\vartheta_1+\vartheta_2)\\
&+\frac{1}{i\vartheta_2}\big(k(v_1,v_2,\vartheta_1)-k(v_1,v_2,\vartheta_1+\vartheta_2)\e^{-iv_1\vartheta_2}\big)\\
&+\frac{1}{i\vartheta_2}\big(k(0,v_2,\vartheta_1+\vartheta_2)\e^{-iv_1\vartheta_2}-k(0,v_2,\vartheta_1)\big){\rm .}
\end{split}
\end{align}
Pour $(v_1,v_2,\vartheta_1,\vartheta_2) \in [0,1]^2\times \mathbb{R}^2$, nous définissons
\begin{align}
k_1(v_1,v_2,\vartheta_1,\vartheta_2)&:=k(v_2,v_1,\vartheta_1)\\
k_2(v_1,v_2,\vartheta_1,\vartheta_2)&:=k(v_1,v_2,\vartheta_1+\vartheta_2)\\
k_3(v_1,v_2,\vartheta_1,\vartheta_2)&:=k(v_1,v_2,\vartheta_1)-k(v_1,v_2,\vartheta_1+\vartheta_2)\e^{-iv_1\vartheta_2}\\
k_4(v_1,v_2,\vartheta_1,\vartheta_2)&:=k(0,v_2,\vartheta_1+\vartheta_2)\e^{-iv_1\vartheta_2}-k(0,v_2,\vartheta_1){\rm ,}
\end{align}
et pour $j=1,\ldots 4$, nous posons
\begin{align}
\begin{split}
\label{F1234}
&F_j(n,\boldsymbol{\chi},u,v_1,v_2,\vartheta_2)\\
:=&\int_{\mathbb{R}}{\tau(n,\boldsymbol{\chi},-\vartheta_1,-\vartheta_2-\vartheta_1)k_j(v_1,v_2,\vartheta_1,\vartheta_2)\e^{i\vartheta_1u}{\rm d}\vartheta_1}{\rm .}
\end{split}
\end{align}
En injectant les équations \eqref{k1234} à \eqref{F1234} dans \eqref{F}, nous obtenons
\begin{align*}
&F(n,\boldsymbol{\chi},u,v_1,v_2,\vartheta_2)\\
=&k(0,v_2,\vartheta_2)F_1(n,\boldsymbol{\chi},u,v_1,v_2,\vartheta_2)+k(-v_1,v_1,\vartheta_2)F_2(n,\boldsymbol{\chi},u,v_1,v_2,\vartheta_2)\\
&+\frac{1}{i\vartheta_2}F_3(n,\boldsymbol{\chi},u,v_1,v_2,\vartheta_2)+\frac{1}{i\vartheta_2}F_4(n,\boldsymbol{\chi},u,v_1,v_2,\vartheta_2){\rm .}
\end{align*}
D'après le théorème de Parseval et l'égalité \eqref{hatdelta3}, nous avons
$$D(n,\boldsymbol{\chi},u,v_1,v_2)\ll\int_{\mathbb{R}}{\lvert F(n,\boldsymbol{\chi},u,v_1,v_2,\vartheta_2)\rvert^{2}{\rm d}\vartheta_2}{\rm ,}$$
où $D(n,\boldsymbol{\chi},u,v_1,v_2)$ est défini en \eqref{eq Du}. En injectant \eqref{F1234} dans cette inégalité, nous obtenons
\begin{align}
\label{majD}
\begin{split}
D(n,\boldsymbol{\chi})\ll &\int_{\mathbb{R}}{\frac{1}{1+\vartheta_2^2}\sup\limits_{\substack{u \in \mathbb{R}\\ (v_1,v_2)\in [0,1]^2}}\lvert F_1(n,\boldsymbol{\chi},u,v_1,v_2,\vartheta_2)\rvert^{2}{\rm d}\vartheta_2}\\
&+\int_{\mathbb{R}}{\frac{1}{1+\vartheta_2^2}\sup\limits_{\substack{u \in \mathbb{R}\\ (v_1,v_2)\in [0,1]^2}}\lvert F_2(n,\boldsymbol{\chi},u,v_1,v_2,\vartheta_2)\rvert^{2}{\rm d}\vartheta_2}\\
&+\int_{\mathbb{R}}{\frac{1}{\vartheta_2^2}\sup\limits_{\substack{u \in \mathbb{R}\\ (v_1,v_2)\in [0,1]^2}}\lvert F_3(n,\boldsymbol{\chi},u,v_1,v_2,\vartheta_2)\rvert^{2}{\rm d}\vartheta_2}\\
&+\int_{\mathbb{R}}{\frac{1}{\vartheta_2^2}\sup\limits_{\substack{u \in \mathbb{R}\\ (v_1,v_2)\in [0,1]^2}}\lvert F_4(n,\boldsymbol{\chi},u,v_1,v_2,\vartheta_2)\rvert^{2}{\rm d}\vartheta_2}{\rm .}
\end{split}
\end{align}
Nous voyons ainsi que l'estimation de
$$\sum\limits_{n\geqslant 1}{\frac{\mu^2(n)g(n)}{n^{1+\sigma}}D(n,\boldsymbol{\chi})}$$
se déduit de celle de
\begin{equation}
\label{Fi}
\sum\limits_{n\geqslant 1}{\frac{\mu^2(n)g(n)}{n^{1+\sigma}}\sup\limits_{u,v_1,v_2}\lvert F_j(n,\boldsymbol{\chi},u,v_1,v_2,\vartheta_2)\rvert^{2}}
\end{equation}
pour $j=1,\ldots,4$. Il nous faut maintenant relier les fonctions $F_j$ aux fonctions $\Delta$ de type \eqref{Dktheta}. Pour cela, nous constatons que nous avons, pour $(v_1,v_2,\vartheta_2)\in ~[0,1]^2\times ~\mathbb{R}$ et tout $u \in \mathbb{R}$ sauf un nombre fini, d'après la formule d'inversion de Fourier,
\begin{align*}
&\int_{\mathbb{R}}{\tau(n,\boldsymbol{\chi},-\vartheta_1,-\vartheta_2-\vartheta_1)k(v_1,v_2,\vartheta_1)\e^{i\vartheta_1u}{\rm d}\vartheta_1}\\ 
=&\int_{\mathbb{R}}{\sum\limits_{\substack{d_1d_2\mid n}}{\chi_1(d_1)\chi_2(d_2)d_2^{-i\vartheta_2}(d_1d_2)^{-i\vartheta_1}}k(v_1,v_2,\vartheta_1)\e^{i\vartheta_1u}{\rm d}\vartheta_1}\\
=&\sum\limits_{\substack{d_1d_2\mid n}}{\chi_1(d_1)\chi_2(d_2)d_2^{-i\vartheta_2}\int_{\mathbb{R}}{(d_1d_2)^{-i\vartheta_1}}k(v_1,v_2,\vartheta_1)\e^{i\vartheta_1u}{\rm d}\vartheta_1}\\
=&\sum\limits_{\substack{d_1d_2\mid n\\ \e^{u+v_1}\leqslant d_1d_2<\e^{u+v_1+v_2}}}{\chi_1(d_1)\chi_2(d_2)d_2^{-i\vartheta_2}}{\rm ,}
\end{align*}
où $\tau(n,\boldsymbol{\chi},\vartheta_1,\vartheta_2)$ est défini en \eqref{tautheta12} et $k(v_1,v_2,\vartheta_1)$ en \eqref{k}. De même, en effectuant le changement de variable $\vartheta_1=\vartheta_1+\vartheta_2$, nous avons
\begin{align*}
&\int_{\mathbb{R}}{\tau(n,\boldsymbol{\chi},-\vartheta_1,-\vartheta_2-\vartheta_1)k(v_1,v_2,\vartheta_1+\vartheta_2)\e^{i\vartheta_1u}{\rm d}\vartheta_1}\\
=&\int_{\mathbb{R}}{\tau(n,\boldsymbol{\chi},\vartheta_2-\vartheta_1,-\vartheta_1)k(v_1,v_2,\vartheta_1)\e^{i(\vartheta_1-\vartheta_2)u}{\rm d}\vartheta_1}\\
=&\sum\limits_{\substack{d_1d_2\mid n\\ \e^{u+v_1}\leqslant d_1d_2<\e^{u+v_1+v_2}}}{\chi_1(d_1)\chi_2(d_2)d_1^{i\vartheta_2}}\e^{-iu\vartheta_2}{\rm .}
\end{align*}
En injectant les deux dernières égalités, ainsi que la formule \eqref{k1234} dans \eqref{F1234}, nous obtenons
\begin{align}
\label{F1}
F_1(n,\boldsymbol{\chi},u,v_1,v_2,\vartheta_2)&=\!\!\!\!\!\!\!\!\sum\limits_{\substack{d_1d_2\mid n\\ \e^{u+v_2}\leqslant d_1d_2<\e^{u+v_1+v_2}}}{\!\!\!\!\!\!\!\!\chi_1(d_1)\chi_2(d_2)d_2^{-i\vartheta_2}}\\
\label{F2}
F_2(n,\boldsymbol{\chi},u,v_1,v_2,\vartheta_2)&=\!\!\!\!\!\!\!\!\sum\limits_{\substack{d_1d_2\mid n\\ \e^{u+v_1}\leqslant d_1d_2<\e^{u+v_1+v_2}}}{\!\!\!\!\!\!\!\!\chi_1(d_1)\chi_2(d_2)d_1^{i\vartheta_2}\e^{-iu\vartheta_2}}\\
\label{F3}
F_3(n,\boldsymbol{\chi},u,v_1,v_2,\vartheta_2)&=\!\!\!\!\!\!\!\!\sum\limits_{\substack{d_1d_2\mid n\\ \e^{u+v_1}\leqslant d_1d_2<\e^{u+v_1+v_2}}}{\!\!\!\!\!\!\!\!\chi_1(d_1)\chi_2(d_2)d_2^{-i\vartheta_2}\big(1-(d_1d_2)^{i\vartheta_2}\e^{-i(u+v_1)\vartheta_2}\big)}\\
\label{F4}
F_4(n,\boldsymbol{\chi},u,v_1,v_2,\vartheta_2)&=\sum\limits_{\substack{d_1d_2\mid n\\ \e^{u}\leqslant d_1d_2<\e^{u+v_2}}}{\chi_1(d_1)\chi_2(d_2)d_2^{-i\vartheta_2}\big((d_1d_2)^{i\vartheta_2}\e^{-i(u+v_1)\vartheta_2}-1\big)}{\rm .}
\end{align}
Dans le cas où $u$ est tel que l'une des égalités ci-dessus n'est pas vérifiée, nous pouvons approcher $D(n,\boldsymbol{\chi},u,v_1,v_2)$ par $D(n,\boldsymbol{\chi},u+\varepsilon,v_1,v_2)$ où $\varepsilon>0$ est suffisamment petit et tel que les égalités \eqref{F1} à \eqref{F4} soient vérifiées pour $u+\varepsilon$. L'erreur que l'on commet est alors dominée par $\log (n) \big(\Delta^*(n,\chi_1)^2+\Delta^*(n,\chi_2)^2\big)$ d'après \eqref{eq Du}, la quantité $\Delta^*(n,\chi)$ étant définie en \eqref{eq lemme 1.2}. La majoration \eqref{majD} se réecrit donc
\begin{align}
\label{majD'}
\begin{split}
D(n,\boldsymbol{\chi})\ll &\int_{\mathbb{R}}{\frac{1}{1+\vartheta_2^2}\sup\limits_{\substack{u \in \mathbb{R}\\ (v_1,v_2)\in [0,1]^2}}\lvert F_1(n,\boldsymbol{\chi},u,v_1,v_2,\vartheta_2)\rvert^{2}{\rm d}\vartheta_2}\\
&+\int_{\mathbb{R}}{\frac{1}{1+\vartheta_2^2}\sup\limits_{\substack{u \in \mathbb{R}\\ (v_1,v_2)\in [0,1]^2}}\lvert F_2(n,\boldsymbol{\chi},u,v_1,v_2,\vartheta_2)\rvert^{2}{\rm d}\vartheta_2}\\
&+\int_{\mathbb{R}}{\frac{1}{\vartheta_2^2}\sup\limits_{\substack{u \in \mathbb{R}\\ (v_1,v_2)\in [0,1]^2}}\lvert F_3(n,\boldsymbol{\chi},u,v_1,v_2,\vartheta_2)\rvert^{2}{\rm d}\vartheta_2}\\
&+\int_{\mathbb{R}}{\frac{1}{\vartheta_2^2}\sup\limits_{\substack{u \in \mathbb{R}\\ (v_1,v_2)\in [0,1]^2}}\lvert F_4(n,\boldsymbol{\chi},u,v_1,v_2,\vartheta_2)\rvert^{2}{\rm d}\vartheta_2}\\
&+\log n \big(\Delta^*(n,\chi_1)^2+\Delta^*(n,\chi_2)^2\big){\rm ,}
\end{split}
\end{align}
où les fonctions $F_j$ sont désormais définies par les équations \eqref{F1} à \eqref{F4}.\\

L'estimation de la somme \eqref{Fi} pour $j=1,2$ se déduit alors de la Proposition \ref{prop 1}. Nous ne traitons ici que le cas $j=1$, le cas $j=2$ étant identique. Dans ce cas, nous pouvons utiliser la remarque \ref{rq prem} afin de ne considérer que les entiers $n$ qui sont premiers avec $q_1q_2$, où $q_i$ désigne le conducteur de $\chi_i$ pour $i=1,2$. Nous pouvons donc poser $d_3=n/(d_1d_2)$ dans la somme \eqref{F1} afin d'obtenir
\begin{align*}
\sum\limits_{\substack{d_1d_2\mid n\\ \e^{u+v_2}\leqslant d_1d_2< \e^{u+v_1+v_2}}}{\!\!\!\!\!\!\!\!\!\chi_1(d_1)\chi_2(d_2)d_2^{-i\vartheta_2}}&=\!\!\!\!\!\!\!\!\!\sum\limits_{\substack{d_2d_3\mid n\\ n\e^{-u-v_1-v_2}< d_3\leqslant n\e^{-u-v_2}}}{\!\!\!\!\!\!\!\!\!\chi_1\Big(\frac{n}{d_2d_3}\Big)\chi_2(d_2)d_2^{-i\vartheta_2}}\\
&=\chi_1(n)\!\!\!\!\!\!\!\!\!\sum\limits_{\substack{d_2d_3\mid n\\ n\e^{-u-v_1-v_2}< d_3\leqslant n\e^{-u-v_2}}}{\!\!\!\!\!\!\!\!\!\overline{\chi_1}(d_3)\overline{\chi_1}\chi_2(d_2)d_2^{-i\vartheta_2}}{\rm .}
\end{align*} 
Nous avons donc, uniformément pour $u,\vartheta_2,v_1,v_2 \in \mathbb{R}^2\times [0,1]^2$
$$\sum\limits_{n\geqslant 1}{\frac{\mu^2(n)g(n)}{n^{1+\sigma}}\sup\limits_{u,v_1,v_2}\lvert F_1(u,v_1,v_2,\vartheta_2)\rvert^{2}}\leqslant \sum\limits_{n\geqslant 1}{\frac{\mu^2(n)g(n)}{n^{1+\sigma}}\lvert \Delta_1^{(0)}(n,\overline{\chi_1},\overline{\chi_1}\chi_2,-\vartheta_2)\rvert^{2}}{\rm .}$$
La Proposition \ref{prop 1}, appliquée à $k=0$, fournit alors
\begin{equation}
\label{majF1}
\int_{\mathbb{R}}{\sum\limits_{n\geqslant 1}{\frac{\mu^2(n)g(n)}{n^{1+\sigma}}\sup\limits_{u,v_1,v_2}\lvert F_1(u,v_1,v_2,\vartheta_2)\rvert^{2}}\frac{{\rm d}\vartheta}{1+\vartheta_2^2}}\ll \frac{1}{\sigma^{m(y,\rho)}}\mathcal{L}_1(\sigma)^{\alpha}{\rm ,}
\end{equation}
où $m(y,\rho)$ est défini en \eqref{eq m} et $\mathcal{L}_1(\sigma)$ en \eqref{eq 1}.

Pour $j=3,4$, une difficulté apparaît, du fait que l'intégrale de $\frac{1}{\vartheta_2^2}$ est divergente en $0$. Nous ne détaillons que le cas $j=3$, le cas $j=4$ étant analogue. Nous posons alors $\vartheta_0:=\mathcal{L}_1(\sigma)^{-1}$. Pour $\lvert\vartheta_2\rvert\geqslant \vartheta_0$ et $j=3$, nous écrivons
\begin{align*}
\big\lvert F_3(u,v_1,v_2,\vartheta_2)\big\rvert^2\ll &\frac{1}{\lvert\vartheta_2\rvert^2}\Big\lvert\sum\limits_{\substack{d_1d_2\mid n\\ \e^{u+v_1}\leqslant d_1d_2< \e^{u+v_1+v_2}}}{\chi_1(d_1)\chi_2(d_2)d_2^{-i\vartheta_2}}\Big\rvert^2\\
&+\frac{1}{\lvert\vartheta_2\rvert^2}\Big\lvert\sum\limits_{\substack{d_1d_2\mid n\\ \e^{u+v_1}\leqslant d_1d_2< \e^{u+v_1+v_2}}}{\chi_1(d_1)\chi_2(d_2)d_1^{i\vartheta_2}}\Big\rvert^2 {\rm .}
\end{align*}

La Proposition \ref{prop 1}, appliquée à $k=0$, fournit alors
\begin{equation}
\label{majF3.1}
\int_{|\vartheta|>\vartheta_0}{\sum\limits_{n\geqslant 1}{\frac{\mu^2(n)g(n)}{n^{1+\sigma}}\sup\limits_{u,v_1,v_2}\lvert F_1(u,v_1,v_2,\vartheta_2)\rvert^{2}}\frac{{\rm d}\vartheta_2}{\vartheta_2^2}}\ll \frac{1}{\sigma^{m(y,\rho)}}\mathcal{L}_1(\sigma)^{\alpha+1}{\rm .}
\end{equation}

Il nous reste à traiter le cas $|\vartheta_2|\leqslant \vartheta_0$. Pour cela, nous effectuons alors un développement limité de 
$$1-\e^{-i(u+v_1)\vartheta_2}(d_1d_2)^{i\vartheta_2}$$
par rapport à $\vartheta_2$. Nous obtenons
\begin{align*}
1-\e^{-i(u+v_1)\vartheta_2}(d_1d_2)^{i\vartheta_2}=-\sum\limits_{k=1}^{k_0}{\frac{\vartheta_2^k}{k!}\Big(\log (d_1d_2)-u-v_1\Big)^k}+O\Big(\frac{\lvert \vartheta_2\rvert^{k_0+1}}{(k_0+1)!}\Big){\rm .}
\end{align*}
Ce qui implique, d'après \eqref{F3},
\begin{align}
\label{CS}
\begin{split}
&\frac{1}{\vartheta_2^2}\big\lvert F_3(u,v_1,v_2,\vartheta_2)\big\rvert^2 \\
\leqslant & \frac{1}{\vartheta_2^2} \sum\limits_{k=1}^{k_0}{\frac{1}{k!}}\sum\limits_{k=1}^{k_0}{\frac{\vartheta_2^{2k}}{k!}\Big\rvert \sum\limits_{\substack{d_1d_2\mid n\\ \e^{u+v_1}\leqslant d_1d_2< \e^{u+v_1+v_2}}}{\chi_1(d_1)\chi_2(d_2)d_2^{-i\vartheta_2}\Big(\log (d_1d_2)-u-v_1\Big)^{k}}\Big\rvert^2}\\
&+O\Big(\frac{\vartheta_2^{2(k_0-1)}}{(k_0+1)!^2}\tau_3(n)^2\Big)\\
\leqslant & \e \sum\limits_{k=1}^{k_0}{\frac{\vartheta_2^{2(k-1)}}{k!}\Big\lvert \!\!\!\!\!\!\!\!\!\!\!\! \sum\limits_{\substack{d_2d_3\mid n\\ n\e^{-u-v_1-v_2}< d_3\leqslant n\e^{-u-v_1}}}{\!\!\!\!\!\!\overline{\chi_1}(d_3)\overline{\chi_1}\chi_2(d_2)d_2^{-i\vartheta_2}\Big(\log n-u-v_1-\log d_3\Big)^{k}}\Big\rvert^2}\\
&+O\Big(\frac{\vartheta_2^{2(k_0-1)}}{(k_0+1)!^2}\tau_3(n)^2\Big)\\
\leqslant & \e \sum\limits_{k=1}^{k_0}{\frac{\vartheta_2^{2(k-1)}}{k!}\Big\lvert \Delta^{(k)}(n,\overline{\chi_1},\chi_2\overline{\chi_1},-\vartheta_2)\Big\rvert^2}+O\Big(\frac{\vartheta_2^{2(k_0-1)}}{(k_0+1)!^2}\tau_3(n)^2\Big){\rm .}
\end{split}
\end{align}

La Proposition \ref{prop 1} fournit alors , pour tout $1\leqslant k\leqslant k_0$, 
\begin{equation}
\label{majF3.2.1}
\int_{-\vartheta_0}^{\vartheta_0}{\sum\limits_{n\geqslant 1}{\frac{\mu^2(n)g(n)}{n^{1+\sigma}}\sup\limits_{u,v_1,v_2}\Big\lvert \Delta^{(k)}(n,\overline{\chi_1},\chi_2\overline{\chi_1},-\vartheta_2)\Big\rvert^2}\frac{\vartheta_2^{2(k-1)}}{k!}{\rm d}\vartheta}\ll \frac{(k+1)^3}{k! \sigma^{m(y,\rho)}}\mathcal{L}_1(\sigma)^{\alpha}{\rm .}
\end{equation}

Nous choisissons ensuite $k_0:=[5y \sqrt{\frac{\log 1/\sigma}{\log_2 1/\sigma}}]+2$, de sorte que 
\begin{equation}
\label{majF3.2.2}
\sum\limits_{n\geqslant 1}{\frac{\mu^2(n)g(n)}{n^{1+\sigma}}\tau_3(n)^2}\frac{1}{(k_0+1)!^2}\int_{-\vartheta_0}^{\vartheta_0}{\vartheta^{2(k_0-1)}{\rm d}\vartheta}\ll 1{\rm .}
\end{equation} 
En combinant les majorations \eqref{majF3.2.1} et \eqref{majF3.2.2} dans l'inégalité \eqref{CS}, nous obtenons
\begin{equation}
\label{majF3.2}
\int_{-\vartheta_0}^{\vartheta_0}{\sum\limits_{n\geqslant 1}{\frac{\mu^2(n)g(n)}{n^{1+\sigma}}\sup\limits_{u,v_1,v_2}\lvert F_3(u,v_1,v_2,\vartheta_2)\rvert^{2}}\frac{{\rm d}\vartheta_2}{\vartheta_2^2}}\ll \frac{1}{\sigma^{m(y,\rho)}}\mathcal{L}_1(\sigma)^{\alpha+1}{\rm .}
\end{equation}
Les équations \eqref{majF3.1} et \eqref{majF3.2} fournissent alors
\begin{equation}
\label{majF3}
\int_{\mathbb{R}}{\sum\limits_{n\geqslant 1}{\frac{\mu^2(n)g(n)}{n^{1+\sigma}}\sup\limits_{u,v_1,v_2}\lvert F_3(u,v_1,v_2,\vartheta_2)\rvert^{2}}\frac{{\rm d}\vartheta_2}{\vartheta_2^2}}\ll \frac{1}{\sigma^{m(y,\rho)}}\mathcal{L}_1(\sigma)^{\alpha+1}{\rm .}
\end{equation}
Enfin, le Théorème \ref{theo 2}, qui sera énoncé et démontré dans la section suivante, fournit la majoration
\begin{equation}
\label{majerr}
\sum\limits_{n\geqslant 1}{\frac{\mu^2(n)}{n^{1+\sigma}}g(n)\log n\Delta^*(n,\chi)^2}\ll \frac{1}{\sigma^{\max\{y+1,m(y,\rho)\}}}\mathcal{L}_1(\sigma)^{\alpha}{\rm ,}
\end{equation}
valable lorsque $\chi$ est un caractère de Dirichlet non principal.\\

En combinant les équations \eqref{majF1}, \eqref{majF3} et \eqref{majerr} à l'inégalité \eqref{majD'}, nous obtenons
 
$$\sum\limits_{n\geqslant 1}{\frac{\mu^2(n)}{n^{1+\sigma}}g(n)D(n,\boldsymbol{\chi})}\ll \frac{1}{\sigma^{\max\{y+1,m(y,\rho)\}}}\mathcal{L}_1(\sigma)^{\alpha}{\rm ,}$$
ce qui correspond à l'énoncé du Théorème \ref{theo 3} pour $D(n,\boldsymbol{\chi})$.

\section{Démonstration du Théorème \ref{theo 1}}
\subsection{Majoration de $\Delta^*(n,\chi)$}
L'objectif de cette partie est de démontrer le théorème suivant.
\begin{theoreme}
\label{theo 2}
Soient $\chi$ un caractère de Dirichlet non principal, $A$, $c$  des constantes strictement positives, $\eta \in ]0,1[$ et $g \in \mathcal{M}_A(\chi,c,\eta)$. Il existe une constante $\alpha>0$, dépendant au plus de $g$, $\chi$, $c$ et $\eta$, telle que nous ayons, uniformément pour $\sigma \in ]0, \frac{1}{10}[$
$$\sum\limits_{n\geqslant 1}{\frac{\mu^2(n)}{n^{1+\sigma}}g(n)\Delta^*(n,\chi)^2}\ll \frac{1}{\sigma^{\max\{y,m(y,\rho)-1\}}}\mathcal{L}_1(\sigma)$$
\noindent
où $y=y(g)$, $\rho$ est défini en \eqref{eq lambda}, $\mathcal{L}_1(\sigma)$ en \eqref{eq 1} et $m(y,\rho)$ en \eqref{eq m}.
\end{theoreme}

\begin{proof}
Nous commençons par utiliser la majoration \eqref{eq lemme 2.1} du Lemme~\ref{lemme 2}. Le membre de droite du majorant ne pose aucune difficulté, puisque nous avons
$$\sum\limits_{n\geqslant 1}{\frac{\mu^2(n)}{n^{1+\sigma}}g(n)\tau(n)^{1/q}} \ll \frac{1}{\sigma^{y2^{1/q}}}=\frac{1}{\sigma^{y+O(1/q)}}{\rm .}$$
\noindent
Le terme $q$ sera de l'ordre de $\sqrt{\frac{\log (1/\sigma)}{\log_2(1/\sigma)}}$, donc
$$\sum\limits_{n\geqslant 1}{\frac{\mu^2(n)}{n^{1+\sigma}}g(n)\tau(n)^{1/q}} \ll \frac{\mathcal{L}_1(\sigma)}{\sigma^y}{\rm .}$$
Nous nous intéressons donc au membre de gauche du majorant de \eqref{eq lemme 2.1}. Le Lemme \ref{lemme 4}, nous permet de considérer uniquement les entiers $n$ pour lesquels le facteur $E(n)$ n'est pas trop petit, ce qui nous amène à étudier la somme 
$$L^{\dagger}_{q}(\sigma)= \sum\limits_{n\geqslant 1}{\frac{\mu^2(n)}{n^{1+\sigma}}g(n)M^{\dagger}_{2q}(n,\chi)^{1/q}}$$
\noindent
où $M^{\dagger}_{2q}(n,\chi)$ est défini en \eqref{eq lemme 2.2}. Plus précisément nous étudions 
$$L^{\dagger}_{T,q}(\sigma)= \sum\limits_{n\geqslant 1}{\frac{\mu^2(n)}{a_nb_n^{1+\sigma}}g(n)M^{\dagger}_{2q}(n,\chi)^{1/q}}$$
\noindent
où $a_n$ et $b_n$ sont définis en \eqref{eq anbn} et $T$ un paramètre que nous fixerons par la suite.\\
Nous employons alors la méthode différentielle de \cite{B} pour majorer cette somme.\\

Pour cela, nous utilisons l'égalité
$$\Delta(np,\chi,u,v)=\Delta(n,\chi,u,v)+\chi(p)\Delta(n,\chi,u-\log p,v)$$
\noindent
valable pour tout $(n,p)=1$ si $p$ est premier. Ainsi, 
$$M^{\dagger}_{2q}(np,\chi)\leqslant 6M^{\dagger}_{2q}(n,\chi)+W^{\dagger}_{q}(n,p)$$
où
$$W^{\dagger}_{q}(n,p):=(1+q)\sum\limits_{j=1}^{q-1}{\binom{2q}{2j}N^{\dagger}_{j,q}(n,\chi,\log p)} {\rm .}$$
Nous avons donc
\begin{align*}
-\frac{{\rm d}}{{\rm d}s}L^{\dagger}_{T,q}(s)&=\sum\limits_{n \geqslant 1}{\frac{\mu^2(n)}{a_nb_n^{1+s}}g(n)M^{\dagger}_{2q}(n,\chi)^{1/q}\log b_n} \\
&\leqslant \sum\limits_{p>T}{g(p)\frac{\log p}{p^{1+s}}\sum\limits_{n\geqslant 1}{\frac{\mu^2(n)}{a_nb_n^{1+s}}g(n)M^{\dagger}_{2q}(np,\chi)^{1/q}}} \\
& \leqslant \sum\limits_{p>T}{g(p)\frac{\log p}{p^{1+s}}\sum\limits_{n\geqslant 1}{\frac{\mu^2(n)}{a_nb_n^{1+s}}g(n)\big(6M^{\dagger}_{2q}(n,\chi)+W^{\dagger}_{q}(n,p)\big)^{1/q}}}
\\
& \leqslant \sum\limits_{p>T}{g(p)\frac{\log p}{p^{1+s}}\sum\limits_{n\geqslant 1}{\frac{\mu^2(n)}{a_nb_n^{1+s}}g(n)\Big(\big(6M^{\dagger}_{2q}(n,\chi)\big)^{1/q}+W^{\dagger}_{q}(n,p)^{1/q}\Big)}}\\
& \leqslant \frac{6^{1/q}(y+as)}{s}L^{\dagger}_{T,q}(s)+\sum\limits_{n\geqslant 1}{g(n)\frac{\mu^2(n)}{a_nb_n^{1+s}}\sum\limits_{p>T}{g(p)\frac{\log p}{p^{1+s}}W^{\dagger}_{q}(n,p)^{1/q}}}{\rm .}
\end{align*}
où $a$ est une constante absolue. Une inégalité de Hölder fournit
\begin{align*}
-\frac{{\rm d}}{{\rm d}s}L^{\dagger}_{T,q}(s)\leqslant &\frac{6^{1/q}(y+as)}{s}L^{\dagger}_{T,q}(s)\\
&+\sum\limits_{n\geqslant 1}{g(n)\frac{\mu^2(n)}{a_nb_n^{1+s}}\Big(\sum\limits_{p>T}{g(p)\frac{\log p}{p}W^{\dagger}_{q}(n,p)}\Big)^{1/q}\Big(\sum\limits_{p>T}{g(p)\frac{\log p}{p^{1+s q/(q-1)}}}\Big)^{(q-1)/q}}{\rm .}
\end{align*}
L'application du Lemme \ref{lemme 6} et une inégalité de Hölder permet ainsi d'avoir
\begin{align*}
-\frac{{\rm d}}{{\rm d}s}L^{\dagger}_{T,q}(s)\leqslant &\frac{6^{1/q}(y+as)}{s}L_{T,q}(s)\\
&+\frac{A}{s^{1-1/q}}\Big(L^{\dagger}_{T,q}(s)\Big)^{(q-2)/(q-1)}\Big(\sum\limits_{n\geqslant 1}{\frac{\mu^{2}(n)}{a_nb_n^{1+s}}g(n)\widehat{\tau_{0}}(n,\chi)}\Big)^{1/(q-1)}\\
&+B_1\frac{(\log T)^{9y}}{s^{9y+1-1/q}\e^{-c/q(\log T)^{\eta}}}{\rm ,}
\end{align*}
\noindent
où $A$ et $B_1$ sont des constantes absolues. Le Lemme \ref{lemme 12} fournit une majoration du terme
$$\sum\limits_{n\geqslant 1}{\frac{\mu^{2}(n)}{a_nb_n^{1+s}}g(n)\widehat{\tau_{0}}(n,\chi)}{\rm .}$$
En effet
\begin{align*}
\widehat{\tau_{0}}(n,\chi)&=\Big(\sum\limits_{d\mid n}{\Big(\int_{\mathbb{R}}{|\tau(d,\chi,\vartheta)|^2\frac{{\rm d} \vartheta}{1+\vartheta^2}}\Big)^q}\Big)^{1/q} \\
&\leqslant \tau(n)^{1/q}\max\limits_{d \mid n}\Big(\int_{\mathbb{R}}{|\tau(d,\chi,\vartheta)|^2\frac{{\rm d} \vartheta}{1+\vartheta^2}}\Big) \\
& \leqslant \tau(n)^{1/q}\int_{\mathbb{R}}{\max\limits_{d \mid n}\big(|\tau(d,\chi,\vartheta)|^2\big)\frac{{\rm d} \vartheta}{1+\vartheta^2}}{\rm ,}
\end{align*}
ce qui nous permet d'utiliser ce lemme. Nous obtenons donc
\begin{align*}
-\frac{{\rm d}}{{\rm d}s}L^{\dagger}_{T,q}(s)\leqslant &\frac{6^{1/q}(y+as)}{s}L_{T,q}(s)+A\Big(L_{T,q}(s)\Big)^{(q-2)/(q-1)}\frac{1}{s^{1+m(y,\rho)/(q-1)-1/q+O(1/q^2)}}\\
&+B_1\frac{(\log T)^{9y}}{s^{9y+1-1/q}\e^{-c/q(\log T)^{\eta}}}{\rm ,}
\end{align*}
où $m(y,\rho)$ est défini en \eqref{eq m}.
\noindent
En posant 
$$\varepsilon:=B_1(\log T)^{9y}e^{-(\log T)^{\eta}c/q}, \; \; \beta:=9y-1/q {\rm ,}$$
\noindent 
nous obtenons
$$-{L^{\dagger}}'_{T,q}(s)\leqslant \phi\big(s,L^{\dagger}_{T,q}(s)\big){\rm ,}$$
\noindent
avec
$$\phi(s,x):=6^{1/q}\frac{y+as}{s}x+\frac{Ax^{(q-2)/(q-1)}}{s^{1-1/q+m(y,\rho)/(q-1)}}+\frac{\varepsilon}{s^{\beta+1}} {\rm .}$$
\noindent
Posons 
$$\gamma:=\max\Big(m(y,\rho)-\frac{q-1}{q}+O\Big(\frac{q-1}{q^2}\Big),y+\frac{b}{q}\Big), \; \; X(s):=\frac{K}{s^{\gamma}}+\frac{\varepsilon}{s^{\beta}}$$
\noindent
pour une certaine constante $b$, de sorte que 
$$\gamma+1\geqslant 1-\frac{1}{q}+\frac{m(y,\rho)}{q-1}+O(\frac{1}{q^2})+\frac{q-2}{q-1}\gamma {\rm ,}$$
et
$$\gamma> 6^{1/q}(y+as_0){\rm ,}$$
où $s_0$ est petit devant $\frac{1}{q}$.
\noindent
Pour $s_0$ suffisamment petit devant $\frac{1}{q}$, en supposant les inégalités
\begin{align*}
&\gamma>6^{1/q}(y+as_0)+AK^{-1/(q-1)}{\rm ,} \\
&\beta>6^{1/q}(y+as_0)+AK^{-1/(q-1)}+1{\rm ,}
\end{align*}
\noindent
nous avons
\begin{align*}
-X'(s)=&\frac{K\gamma}{s^{\gamma+1}}+\frac{\epsilon\beta}{s^{\beta+1}} \\
\geqslant& \frac{6^{1/q}K(y+as)}{s^{\gamma+1}}+\frac{\varepsilon6^{1/q}(y+as)}{s^{\beta+1}}+\frac{AK^{(q-2)/(q-1)}}{s^{\gamma+1}}\Big(1+\frac{\varepsilon s^{\gamma}}{Ks^{\beta}}\Big)\\
&+\frac{\varepsilon}{s^{\beta+1}} \\
\geqslant& \frac{6^{1/q}K(y+as)}{s^{\gamma+1}}+\frac{\varepsilon6^{1/q}(y+as)}{s^{\beta+1}}\\
&+\frac{AK^{(q-2)/(q-1)}}{s^{\gamma+1}}\Big(1+\frac{\varepsilon s^{\gamma}}{Ks^{\beta}}\Big)^{(q-2)/(q-1)}+\frac{\varepsilon}{s^{\beta+1}}\\
\geqslant &\phi\big(s,X(s)\big){\rm .}
\end{align*}

\begin{remarque}
Les hypothèses sur $\gamma$ et $\beta$ sont vérifiées pour $s_0$ suffisamment petit devant $\frac{1}{q}$ dès que nous choisissons $K$ de l'ordre de $(qC_2)^q$ pour une constante $C_2$ assez grande.
\end{remarque}

Comme tout ceci n'est valable que pour $s\leqslant \frac{1}{\log T}$, nous notons $s_0=\frac{1}{\log T}$. le réel $T$ sera choisi de telle sorte que $\frac{1}{\log T}$ soit petit devant $\frac{1}{q}$. Remarquons dans ce cas que
$$L^{\dagger}_{T,q}(s)\leqslant \sum\limits_{n\geqslant 1}{\frac{\mu^2(n)}{a_nb_n^{1+s}}\tau_3(n)^{2}} \ll \frac{(\log T)^{9y}}{s^{9y}} {\rm .} $$ 
Ainsi
$$L^{\dagger}_{T,q}(s_0)\leqslant X(s_0)$$
\noindent
en choisissant $K=(qC_2)^q(\log T)^{18}$. Le lemme 70.2 de \cite{HT} permet alors de montrer l'inégalité suivante
$$L^{\dagger}_{T,q}(s)\leqslant \frac{(qC_2)^q(\log T)^{18}}{s^{m(y,\rho)-1+1/q}}+\frac{B(\log T)^{9y}}{\e^{(\log T)^{\eta}c/q}s^{9y-1/q}} {\rm .}$$
\noindent
Nous choisissons $T$ de sorte que 
$$\e^{(\log T)^{\eta}c/q}=\sigma^{-9y}$$
\noindent
ce qui implique 
$$\Big(\frac{9yq\log \frac{1}{\sigma}}{c}\Big)^{1/\eta}=\log T {\rm .}$$
\noindent
Dans le cas où $q$ est de l'ordre de $\sqrt{\frac{\log(1/\sigma)}{\log_2(1/\sigma)}}$, nous avons bien $\sigma\leqslant \frac{1}{\log T}$. De plus, le terme dominant est celui de gauche, ainsi 
$$L^{\dagger}_{T,q}(\sigma)\ll \frac{1}{\sigma^{\max\{y,m(y,\rho)-1\}}}\mathcal{L}_1(\sigma)^{\alpha}{\rm ,} $$
où $\alpha$ est une constante strictement positive.
\end{proof}
\subsection{Fin de la démonstration}
Dans cette partie, nous utilisons les différents théorèmes et lemmes vus précédemment afin de démontrer le Théorème \ref{theo 1}. Pour commencer nous appliquons la méthode de Rankin, ainsi que la formule (25) de \cite{T} pour obtenir la majoration
$$\mathfrak{S}(x,g,\boldsymbol{\chi})\ll \frac{x^{1+\sigma}}{\log x}\sum\limits_{n\geqslant 1}{\frac{\mu^2(n)}{n^{1+\sigma}}g(n)\Delta_3(n,\boldsymbol{\chi})^2}:=\mathfrak{S}_0(\sigma,g,\boldsymbol{\chi}){\rm .}$$
Le Lemme \ref{lemme 1} permet de majorer $\Delta_3(n,\boldsymbol{\chi})^2$. La contribution des deuxième et troisième termes du majorant est, selon le Théorème \ref{theo 2} $\ll \frac{1}{\sigma^{\max\{y,m(y,\rho)-1\}}}\mathcal{L}_1(\sigma)^{\alpha}$.
Pour traiter le premier terme du majorant, nous utilisons le Lemme \ref{lemme 4} pour ne travailler que sur la somme suivante
$$L_{q}(\sigma):= \sum\limits_{n\geqslant 1}{\frac{\mu^2(n)}{n^{1+\sigma}}g(n)M_{2q}(n,\boldsymbol{\chi})^{1/q}}{\rm ,}$$
\noindent
où $M_{q}(n,\boldsymbol{\chi})$ est défini en \eqref{eq M}. Plus précisément, nous étudions 
$$L_{T,q}(\sigma):= \sum\limits_{n\geqslant 1}{\frac{\mu^2(n)g(n)}{a_nb_n^{1+\sigma}}M_{2q}(n,\boldsymbol{\chi})^{1/q}} {\rm ,}$$
où $a_n$ et $b_n$ sont définis en \eqref{eq anbn}.
En dérivant par rapport à $\sigma$ la fonction précédente, nous obtenons
\begin{align*}
-L'_{T,q}(\sigma)&=\sum\limits_{n\geqslant 1}{\frac{\mu^2(n)}{a_nb_n^{1+\sigma}}g(n)M_{2q}(n,\boldsymbol{\chi})^{1/q}\log b_n}\\
&=\sum\limits_{n\geqslant 1}{\frac{\mu^2(n)}{a_nb_n^{1+\sigma}}g(n)M_{2q}(n,\boldsymbol{\chi})^{1/q}\sum\limits_{\substack{p\mid n \\ p>T}}{\log p}}\\
&=\sum\limits_{n\geqslant 1}{\frac{\mu^2(n)}{a_nb_n^{1+\sigma}}g(n)\sum\limits_{ p>T}{\frac{g(p)}{p^{1+\sigma}}\log p M_{2q}(np,\boldsymbol{\chi})^{1/q}}}{\rm .}
\end{align*}
Pour majorer $M_{2q}(np,\boldsymbol{\chi})$, il nous faut dans un premier temps calculer $\Delta_3(np,\boldsymbol{\chi},\mathbf{u},\mathbf{v})$. Nous avons
\begin{align}
\begin{split}
\Delta_3(np,\boldsymbol{\chi},\mathbf{u},\mathbf{v})=&\Delta_3(n,\boldsymbol{\chi},\mathbf{u},\mathbf{v})+\chi_1(p)\Delta_3(n,\boldsymbol{\chi},u_1-\log p,u_2,\mathbf{v})\\
&+\chi_2(p)\Delta_3(n,\boldsymbol{\chi},u_1,u_2,\mathbf{v}){\rm .}
\end{split}
\end{align}
En prenant le module à la puissance $2q$, nous obtenons
\begin{align}
\begin{split}
&\Big\lvert\Delta_3(np,\boldsymbol{\chi},\mathbf{u},\mathbf{v})\Big\rvert^{2q}\\
\leqslant& \sum\limits_{i+j+k=2q}{\binom{2q}{i,j,k}\Big\lvert\Delta_3(n,\boldsymbol{\chi},\mathbf{u},\mathbf{v})\Big\rvert^{i}\Big\lvert\Delta_3(n,\boldsymbol{\chi},u_1-\log p,u_2,\mathbf{v})\Big\rvert^{j}\Big\lvert\Delta_3(n,\boldsymbol{\chi},u_1,u_2-\log p,\mathbf{v})\Big\rvert^{k}}
\end{split}
\end{align}
Si les trois exposants $i,j,k$ sont non nuls, nous considérons le plus grand de ces trois exposants. Nous pouvons supposer sans perdre de généralité qu'il s'agit de $k$. Nous décomposons alors $k$ de la manière suivante 
$$k=k_1+k_2{\rm ,}$$
où $k_1$ et $k_2$ vérifient
\begin{align*}
i+k_1=q\\
j+k_2=q{\rm .}
\end{align*}
Nous appliquons alors l'inégalité 
$$ab\leqslant \frac{1}{2}(a^2+b^2)$$
à
$$a=\Big\lvert\Delta_3(n,\boldsymbol{\chi},\mathbf{u},\mathbf{v})\Big\rvert^{i}\Big\lvert\Delta_3(n,\boldsymbol{\chi},u_1,u_2-\log p,\mathbf{v})\Big\rvert^{k_1}$$
et 
$$b=\Big\lvert\Delta_3(n,\boldsymbol{\chi},u_1-\log p,u_2,\mathbf{v})\Big\rvert^{j}\Big\lvert\Delta_3(n,\boldsymbol{\chi},u_1,u_2-\log p,\mathbf{v})\Big\rvert^{k_2}$$
afin d'obtenir
\begin{align}
\begin{split}
&\Big\lvert\Delta_3(n,\boldsymbol{\chi},\mathbf{u},\mathbf{v})\Big\rvert^{i}\Big\lvert\Delta_3(n,\boldsymbol{\chi},u_1-\log p,u_2,\mathbf{v})\Big\rvert^{j}\Big\lvert\Delta_3(n,\boldsymbol{\chi},u_1,u_2-\log p,\mathbf{v})\Big\rvert^{k}\\
\leqslant& \frac{1}{2}\Big\lvert\Delta_3(n,\boldsymbol{\chi},\mathbf{u},\mathbf{v})\Big\rvert^{2i}\Big\lvert\Delta_3(n,\boldsymbol{\chi},u_1,u_2-\log p,\mathbf{v})\Big\rvert^{2(q-i)}\\
&+\frac{1}{2}\Big\lvert\Delta_3(n,\boldsymbol{\chi},u_1-\log p,u_2,\mathbf{v})\Big\rvert^{2j}\Big\lvert\Delta_3(n,\boldsymbol{\chi},u_1,u_2-\log p,\mathbf{v})\Big\rvert^{2(q-j)}{\rm .}
\end{split}
\end{align}
Si l'un des trois exposants est nul, disons $k$, nous utilisons les majorations suivantes.
\begin{align*}
&\Big\lvert\Delta_3(n,\boldsymbol{\chi},\mathbf{u},\mathbf{v})\Big\rvert^{2h+1}\Big\lvert\Delta_3(n,\boldsymbol{\chi},u_1,u_2-\log p,\mathbf{v})\Big\rvert^{2q-2h-1}\\
\leqslant& \frac{1}{2}\Big\lvert\Delta_3(n,\boldsymbol{\chi},\mathbf{u},\mathbf{v})\Big\rvert^{2h}\Big\lvert\Delta_3(n,\boldsymbol{\chi},u_1,u_2-\log p,\mathbf{v})\Big\rvert^{2q-2h}\\
&+\frac{1}{2}\Big\lvert\Delta_3(n,\boldsymbol{\chi},\mathbf{u},\mathbf{v})\Big\rvert^{2h+2}\Big\lvert\Delta_3(n,\boldsymbol{\chi},u_1,u_2-\log p,\mathbf{v})\Big\rvert^{2q-2h-2} && (1\leqslant h\leqslant q-2){\rm ,}\\
&\Big\lvert\Delta_3(n,\boldsymbol{\chi},\mathbf{u},\mathbf{v})\Big\rvert\Big\lvert\Delta_3(n,\boldsymbol{\chi},u_1,u_2-\log p,\mathbf{v})\Big\rvert^{2q-1}\\
\leqslant& \frac{1}{2q}\Big\lvert\Delta_3(n,\boldsymbol{\chi},u_1,u_2-\log p,\mathbf{v})\Big\rvert^{2q}\\
&+\frac{q}{2}\Big\lvert\Delta_3(n,\boldsymbol{\chi},\mathbf{u},\mathbf{v})\Big\rvert^{2}\Big\lvert\Delta_3(n,\boldsymbol{\chi},u_1,u_2-\log p,\mathbf{v})\Big\rvert^{2q-2} {\rm ,}\\
&\Big\lvert\Delta_3(n,\boldsymbol{\chi},\mathbf{u},\mathbf{v})\Big\rvert^{2q-1}\Big\lvert\Delta_3(n,\boldsymbol{\chi},u_1,u_2-\log p,\mathbf{v})\Big\rvert\\
\leqslant & \frac{q}{2}\Big\lvert\Delta_3(n,\boldsymbol{\chi},\mathbf{u},\mathbf{v})\Big\rvert^{2(q-1)}\Big\lvert\Delta_3(n,\boldsymbol{\chi},u_1,u_2-\log p,\mathbf{v})\Big\rvert^{2}\\
&+\frac{1}{2q}\Big\lvert\Delta_3(n,\boldsymbol{\chi},\mathbf{u},\mathbf{v})\Big\rvert^{2q}{\rm .}
\end{align*}
La majoration de  $M_{2q}(np,\boldsymbol{\chi})$ fait donc intervenir les quantités $N_{1,j,q}(n,\boldsymbol{\chi},\log p)$ et $N_{2,j,q}(n,\boldsymbol{\chi},\log p)$ définies respectivement en \eqref{N1jq} et \eqref{N2jq}. Pour la suite, nous définissons
\begin{equation}
\label{eq n}
n(y,\rho):=\max\{y+1,(\rho+2)y,3y-1\}=\max\{y+1,m(y,\rho)\}{\rm .}
\end{equation}
Le Lemme \ref{lemme 7} fournit alors
\begin{align*}
-L'_{T,q}(s)\leqslant &\frac{9^{1/q}(y+as)}{s}L_{T,q}(s)\\
&+\frac{A}{s^{1-1/q}}\Big(L_{T,q}(s)\Big)^{(q-2)/(q-1)}\Big(\sum\limits_{n\geqslant 1}{\frac{\mu^2(n)g(n)}{a_nb_n^{1+s}}C(n,\boldsymbol{\chi})}\Big)^{1/q}\\
&\Big(\sum\limits_{n \geqslant 1}{\frac{\mu^2(n)g(n)}{a_nb_n^{1+s}}\widehat{\tau_{2}}(n,\boldsymbol{\chi})}\Big)^{1/q(q-1)}\\
&+\frac{A}{s^{1-1/q}}\Big(L_{T,q}(s)\Big)^{(q-2)/(q-1)}\Big(\sum\limits_{n\geqslant 1}{\frac{\mu^2(n)g(n)}{a_nb_n^{1+s}}C(n,^t\boldsymbol{\chi})}\Big)^{1/q}\\
&\Big(\sum\limits_{n \geqslant 1}{\frac{\mu^2(n)g(n)}{a_nb_n^{1+s}}\widehat{\tau_{2}}(n,\boldsymbol{\chi})}\Big)^{1/q(q-1)}\\
&+\frac{A}{s^{1-1/q}}\Big(L_{T,q}(s)\Big)^{(q-2)/(q-1)}\Big(\sum\limits_{n\geqslant 1}{\frac{\mu^2(n)g(n)}{a_nb_n^{1+s}}D(n,\boldsymbol{\chi})}\Big)^{1/q}\\
&\Big(\sum\limits_{n \geqslant 1}{\frac{\mu^2(n)g(n)}{a_nb_n^{1+s}}\widehat{\tau_{2}}(n,\boldsymbol{\chi})}\Big)^{1/q(q-1)}\\
&+B_1\frac{(\log T)^{9y}}{s^{9y+1-1/q}\e^{-c/q(\log T)^{\eta}}}
\end{align*}
\noindent
où $^t\boldsymbol{\chi}=(\chi_2, \chi_1)$.
\noindent
D'après le Théorème \ref{theo 3}, nous avons
\begin{align*}
-L'_{T,q}(s)\leqslant &\frac{9^{1/q}(y+as)}{s}L_{T,q}(s)+A\Big(L_{T,q}(s)\Big)^{(q-2)/(q-1)}\frac{\mathcal{L}_1(s)^{\alpha/(q-1)}}{s^{1+(n(y,\rho)/q-1/q+9y/q(q-1)}}\\
&+B_1\frac{(\log T)^{9y}}{s^{9y+1-1/q}\e^{-c/q(\log T)^{\eta}}}{\rm .}
\end{align*}
\noindent
En posant 
$$\varepsilon:=B_1(\log T)^{9y}e^{-(\log T)^{\eta}c/q}, \; \; \beta:=9y-1/q{\rm ,}$$
\noindent 
nous obtenons
$$-L'_{T,q}(s)\leqslant \phi_2\big(s,L_{T,q}(s)\big){\rm ,}$$
\noindent
avec
$$\phi_2(s,x):=9^{1/q}\frac{y+as}{s}x+\frac{A\mathcal{L}_1(\sigma)^{\alpha/(q-1)}x^{(q-2)/(q-1)}}{s^{1-1/q+n(y,\rho)/(q-1)+9y/q(q-1)}}+\frac{\varepsilon}{s^{\beta+1}} {\rm .}$$
\noindent
Posons 
$$\gamma_2:=\max\Big(y+\frac{b}{q},n(y,\rho)-1+\frac{9y}{q}\Big), \; \; X_2(s):=\frac{K_2}{s^{\gamma_2}}+\frac{\varepsilon}{s^{\beta}}{\rm ,}$$
\noindent
de sorte que nous ayons les inégalités suivantes,
$$\gamma_2+1\geqslant 1-\frac{1}{q}+\frac{n(y,\rho)}{q-1}+\frac{9y}{q(q-1)}+\frac{q-2}{q-1}\gamma_2 {\rm ,}$$
et 
$$\gamma_2>9^{1/q}(y+as_0)$$
pour $s_0$ suffisamment petit devant $\frac{1}{q}$. Pour que $X_2$ vérifie
$$-X_2'(s)\geqslant \phi_2\big(s,X_2(s)\big){\rm ,}$$
il faut choisir $K_2$ de l'ordre de $(qC_3)^q\mathcal{L}_1(\sigma)^{\alpha}$ où $C_3$ est une constante suffisamment grande. Nous obtenons finalement, pour les choix de $T$ et de $q$ définis en \eqref{eq T} et \eqref{eq q},
$$L_{T,q}(\sigma)\ll\frac{1}{\sigma^{n(y,\rho)-1}}\mathcal{L}_1(\sigma)^{\alpha_1}{\rm ,}$$
où $\alpha_1>\alpha$.
En spécifiant $\sigma=1/\log x$, nous obtenons, d'après \eqref{eq n},
$$\mathfrak{S}(x,g,\boldsymbol{\chi})\ll x(\log x)^{\max\{y-1,(\rho+2)y-2,3y-3\}}\mathcal{L}(x)^{\alpha_1}{\rm .}$$

\section{Démonstration du Théorème \ref{prop 2}}

\begin{proof}
L'inégalité 
$$\sum\limits_{n\geqslant x}{\mu^2(n)y^{\omega(n)}\Delta_3(n,\boldsymbol{\chi})^2}\gg x(\log x)^{y-1}$$
se déduit de la minoration triviale
$$\Delta_3(n,\boldsymbol{\chi})^2 \geqslant 1{\rm .}$$

La deuxième inégalité est obtenue par application du théorème de Parseval.
Comme la fonction $\Delta_3(n,\boldsymbol{\chi},\mathbf{u},\mathbf{v})$ est nulle si $u_1$ ou $u_2$ est plus grand que $\log n$ en valeurs absolue, nous pouvons écrire que pour tout $n\leqslant x$

\begin{equation}
\label{eq 44}
\Delta_3^2(n,\boldsymbol{\chi})\gg\frac{1}{(\log x)^2} \int_{\mathbb{R}^2 \times [0,1]^2}{|\Delta_3(n,\boldsymbol{\chi},\mathbf{u},\mathbf{v})|^2{\rm d}\mathbf{u}{\rm d}\mathbf{v}} {\rm .}
\end{equation}
La fonction $\mathbf{u}\rightarrow \Delta_3(n,\boldsymbol{\chi},\mathbf{u},\mathbf{v})$ est  intégrable et de carré intégrable, nous pouvons donc utiliser le théorème de Parseval pour calculer l'intégrale ci-dessus. Nous obtenons 
$$\int_{\mathbb{R}^2 \times [0,1]^2}{|\Delta_3(n,\boldsymbol{\chi},\mathbf{u},\mathbf{v})|^2{\rm d}\mathbf{u}{\rm d}\mathbf{v}}=\int_{\mathbb{R}^2 \times [0,1]^2}{|\tau(n,\boldsymbol{\chi},\boldsymbol{\vartheta})|^2\prod\limits_{i=1}^{2}{\frac{\sin^2(v_i\vartheta_i/2)}{(\vartheta_i/2)^2}}{\rm d}\boldsymbol{\vartheta}{\rm d}\mathbf{v}} {\rm ,}$$
où $\tau(n,\boldsymbol{\chi},\boldsymbol{\vartheta})$ est défini en \eqref{tautheta12}.

L'avantage de cette nouvelle fonction est qu'elle est multiplicative, nous savons donc estimer sa fonction sommatoire. Tout d'abord le calcul de l'intégrale en $v_1$ et $v_2$ montre que 

\begin{equation}
\label{eq 2}
\int_{\mathbb{R}^2 \times [0,1]^2}{|\Delta_3(n,\boldsymbol{\chi},\mathbf{u},\mathbf{v})|^2{\rm d}\mathbf{u}{\rm d}\mathbf{v}} \gg \int_{[0,1]^2}{|\tau(n,\boldsymbol{\chi},\boldsymbol{\vartheta})|^2{\rm d}\boldsymbol{\vartheta}} {\rm .}
\end{equation}
Soit $F(s)$ la série de Dirichlet associée à la fonction
$$\mu^2(n)y^{\omega(n)}|\tau(n,\boldsymbol{\chi},\boldsymbol{\vartheta})|^2 {\rm .}$$
Pour $\mathfrak{Re}(s)>1$, cette série de Dirichlet admet le produit eulérien suivant
$$F(s)=\prod\limits_{p}{\Big(1+\frac{y}{p^s}|1+\chi_1(p)p^{i\vartheta_1}+\chi_2(p)p^{i\vartheta_2}|^2\Big)} $$
que nous pouvons réécrire sous la forme
\begin{align*}
F(s)= & \prod\limits_{p}{\Big\{1+\frac{y}{p^s}\Big(3 +\chi_1(p)p^{i\vartheta_1}+\overline{\chi_1(p)}p^{-i\vartheta_1}+\chi_2(p)p^{i\vartheta_2}+\overline{\chi_2(p)}p^{-i\vartheta_2}} \\
& +\chi_1(p)\overline{\chi_2(p)}p^{i(\vartheta_1-\vartheta_2)}+\overline{\chi_1(p)}\chi_2(p)p^{i(\vartheta_2-\vartheta_1)}\Big)\Big\} {\rm .}
\end{align*}

\noindent
Dans le cas où $\chi_1\overline{\chi_2}$ n'est pas principal, nous avons 

\begin{align}
\begin{split}
\label{eq 3}
F(s)= & \zeta(s)^{3y}L(s-i\vartheta_1,\chi_1)^yL(s+i\vartheta_1,\overline{\chi_1})^yL(s-i\vartheta_2,\chi_2)^yL(s+i\vartheta_2,\overline{\chi_2})^y\\
& L(s-i(\vartheta_1-\vartheta_2),\chi_1\overline{\chi_2})^yL(s+i(\vartheta_1-\vartheta_2),\overline{\chi_1}\chi_2)^yF_1(s){\rm ,}
\end{split}
\end{align}
où $F_1(s)$ est une série de Dirichlet absolument  convergente pour $\Re e(s)>3/4$. \\

La méthode de Selberg-Delange assure alors que lorsque $|\vartheta_1|$, $|\vartheta_2|$ et $|\vartheta_1-\vartheta_2|$ sont plus grands que $c/\log(x)$ pour une certaine constante $c$, nous avons

\begin{equation}
\label{eq 4}
\sum\limits_{n\leqslant x}{\mu^2(n)y^{\omega(n)}|\tau(n,\boldsymbol{\chi},\boldsymbol{\vartheta})|^2}\gg x\log(x)^{3y-1} {\rm .}
\end{equation}

\noindent
Les formules \eqref{eq 44},\eqref{eq 2} et \eqref{eq 4} permettent de conclure dans le cas où $\chi_1\overline{\chi_2}$ est non principal. \\

Dans le cas où $\chi_1=\chi_2=\chi$, les deux dernières fonctions $L$ qui apparaissaient dans la formule \eqref{eq 3} ne sont plus des fonctions $L$ mais des fonctions $\zeta$, cependant le fait que leur argument ne soit pas $s$ nous empêche d'appliquer la méthode de Selberg-Delange pour conclure. Afin de contourner cette difficulté, nous constatons que si $\vartheta_1$ et $\vartheta_2$ sont très proches, alors $\zeta\big(s+i(\vartheta_1-\vartheta_2)\big)$ est proche de  $\zeta(s)$, ce qui nous conduit à énoncer le lemme suivant.

\begin{lemme}

Pour $x\geqslant 16$, il existe une constante $c$ telle que pour $|\vartheta_1-\vartheta_2|\leqslant c/\log x$, il existe une constante $B$ dépendant uniquement de $c$ telle que pour tout $n\leqslant x$
sans facteur carré
$$\Big|\log\Big(\frac{f(n)}{g(n)}\Big)\Big|\leqslant B$$

\noindent
où

$$f(n)=|\tau(n,\chi,\chi,\vartheta_1,\vartheta_2)|^2$$

\noindent
et 

$$g(n)=|\tau(n,\chi,\chi,\vartheta_1,\vartheta_1)|^2{\rm .}$$
\end{lemme}

Supposons le lemme démontré. Dans ce cas sur la bande $|\vartheta_1-\vartheta_2|\leqslant c/\log x$, nous avons

\begin{align}
\label{eq 5}
\sum\limits_{n\leqslant x}{\mu^2(n)y^{\omega(n)}f(n)}\gg \sum\limits_{n\leqslant x}{\mu^2(n)y^{\omega(n)}g(n)}
\end{align}

\noindent
et si nous notons $G$ la série de Dirichlet associée à $\mu^2(n)y^{\omega(n)}g(n)$, nous pouvons écrire
$$G(s)=\zeta(s)^{5y}L(s-i\vartheta_1,\chi)^{2y}L(s+i\vartheta_1,\chi)^{2y}G_1(s){\rm ,}$$
où $G_1(s)$ est une série de Dirichlet absolument convergente pour $\Re e(s)>3/4$. \\

La méthode de Selberg-Delange assure alors que si $|\vartheta_1|\geqslant c'/\log x$ pour une certaine constante $c'>0$, alors

\begin{align}
\label{eq 6}
\sum\limits_{n\leqslant x}{\mu^2(n)y^{\omega(n)}g(n)}\gg x\log(x)^{5y-1} {\rm .}
\end{align}
Les formules \eqref{eq 1}, \eqref{eq 2}, \eqref{eq 5} (qui n'est valable que sur un ensemble de mesure $1/\log x$) et \eqref{eq 6} permettent de conclure dans le cas où $\chi_1=\chi_2$.
\end{proof}

Il nous reste à démontrer le lemme.

\begin{proof}

Pour $p\leqslant x$, nous avons

$$\frac{\tau(p,\chi,\chi,\vartheta_1,\vartheta_2)}{\tau(p,\chi,\chi,\vartheta_1,\vartheta_1)}=\frac{1+\chi(p)\big(p^{i\vartheta_1}+p^{i\vartheta_2}\big)}{1+2\chi(p)p^{i\vartheta_1}}{\rm .}$$
Par inégalité triangulaire, le module du dénominateur est toujours plus grand que $1$, nous pouvons donc écrire
\begin{align*}
\frac{\tau(p,\chi,\chi,\vartheta_1,\vartheta_2)}{\tau(p,\chi,\chi,\vartheta_1,\vartheta_1)}& =\frac{1+2\chi(p)p^{i\vartheta_1}+\chi(p)\big(p^{i\vartheta_2}-p^{i\vartheta_1}\big)}{1+2\chi(p)p^{i\vartheta_1}}\\
&=1+O\big((\vartheta_1-\vartheta_2)\log(p)\big) {\rm .}
\end{align*}

\noindent
Ainsi, si la différence $\vartheta_1-\vartheta_2$ est suffisamment petite par rapport à $1/\log x$, d'une part la fraction ne s'annule pas, nous pouvons donc considérer le logarithme du module de cette fraction, et d'autre part, le développement limité du logarithme nous assure que 

$$\log\Big|\frac{\tau(p,\chi,\chi,\vartheta_1,\vartheta_2)}{\tau(p,\chi,\chi,\vartheta_1,\vartheta_1)}\Big|=O\big((\vartheta_1-\vartheta_2)\log(p)\big) {\rm .}$$

\noindent
Nous avons donc pour tout $n\leqslant x$ sans facteur carré

\begin{align*}
\log\Big(\frac{f(n)}{g(n)}\Big) & =\sum\limits_{p\mid n}{2\log\Big|\frac{\tau(p,\chi,\vartheta_1,\vartheta_2)}{\tau(p,\chi,\vartheta_1,\vartheta_1)}\Big|} \\
&=\sum\limits_{p\mid n}{O\big((\vartheta_1-\vartheta_2)\log(p)\big)} \\
&=O\big((\vartheta_1-\vartheta_2)\log(n)\big)\\
&=O(1){\rm .}
\end{align*}

\noindent
D'où le résultat.
\end{proof}

\section*{Remerciements}

Je tiens ici à remercier mon directeur de thèse, Régis de la Bretèche, et Gérald Tenenbaum, qui, au travers de nombreuses discussions, m'ont permis d'obtenir des avancées significatives dans la résolution de mon problème, et qui ont également pris le temps de répondre aux nombreuses questions que je leur ai posées.

\nocite{*}
\bibliographystyle{amsplain}


\bibliography{biblio4}
\addcontentsline{toc}{section}{Références}

\end{document}